\theoremstyle{plain}
\newtheorem{theorem}{Theorem}[section]
\newtheorem{corollary}[theorem]{Corollary}
\newtheorem{lemma}[theorem]{Lemma}
\newtheorem{proposition}[theorem]{Proposition}
\theoremstyle{definition}
\newtheorem{definition}[theorem]{Definition}
\theoremstyle{remark}
\newtheorem{remark}[theorem]{Remark}
\newtheorem{example}[theorem]{Example}
\newcommand{\codim}{\text{codim}}
\newcommand{\primeset}[1]{#1}
\newcommand{\id}{\textup{id}}
\newcommand{\onto}{\twoheadrightarrow}
\newcommand{\td}[1]{\tilde{#1}}
\newcommand{\into}{\hookrightarrow}
\newcommand{\Z}{\mathbb{Z}}
\newcommand{\Q}{\mathbb{Q}}
\newcommand{\R}{\mathbb{R}}
\newcommand{\D}{\mathbb{D}}
\newcommand{\bd}{\partial}
\newcommand{\pf}{\pitchfork}
\renewcommand{\H}{\mathbb H}
\newcommand{\mc}[1]{\mathcal{#1}}
\newcommand{\ms}[1]{\mathscr{#1}}
\newcommand{\dlim}{\varinjlim}
\newcommand{\Hom}{\text{Hom}}
\newcommand{\Ext}{\text{Ext}}
\newcommand{\mf}{\mathfrak}
\newcommand{\im}{\text{im}}
\newcommand{\cok}{\text{cok}}
\renewcommand{\P}{\mathbb P}
\newcommand{\q}{\mathfrak q}
\newcommand{\ttau}{\text{\texthtt}}
\newcommand{\xr}{\xrightarrow}
\begin{document}

\title{Generalizations of intersection homology and perverse sheaves with duality over the integers}
\author{Greg Friedman\thanks{This work was partially funded by the National Science Foundation under Grant Number (DMS-1308306) to Greg Friedman
and by a grant from the Simons Foundation (\#209127 to Greg Friedman).} }

\date{March 8, 2019}

\maketitle
\tableofcontents

\bigskip

\textbf{2000 Mathematics Subject Classification:} Primary: 55N33, 55N30, 57N80, 55M05

\textbf{Keywords: intersection homology, intersection cohomology, perverse sheaves, pseudomanifold, Poincar\'e duality}

\begin{abstract}
We provide a generalization of the Deligne sheaf construction of intersection homology theory, and a corresponding generalization of Poincar\'e  duality on pseudomanifolds, such that the Goresky-MacPherson, Goresky-Siegel, and Cappell-Shaneson duality theorems all arise as special cases. Unlike classical intersection homology theory, our duality theorem holds with ground coefficients in an arbitrary PID and with no local cohomology conditions on the underlying space. Self-duality does require local conditions, but our perspective leads to a new class of spaces more general than the Goresky-Siegel IP spaces on which upper-middle perversity intersection homology is self dual. We also examine torsion-sensitive t-structures and categories of perverse sheaves that contain our torsion-sensitive Deligne sheaves as intermediate extensions. 
\end{abstract}

\section{Introduction}

In this paper we formulate a modified version of the ``Deligne sheaf'' construction, which  was introduced by Goresky and MacPherson \cite{GM2} as a sheaf-theoretic approach to intersection homology on stratified pseudomanifolds. The perversity parameters in our theory assign to each stratum not only a truncation degree but also a set of primes in a fixed ground PID that are utilized in a variant ``torsion-tipped truncation.'' The resulting ``torsion-sensitive Deligne sheaves'' admit a generalized Poincar\'e duality theorem on stratified pseudomanifolds of which the Goresky-MacPherson \cite{GM2}, Goresky-Siegel \cite{GS83}, and Cappell-Shaneson \cite{CS91} duality theorems for intersection homology all occur as special cases. Our duality theorem holds with no local cohomology conditions such as the Witt condition or the Goresky-Siegel locally torsion free condition that are typically required on the underlying space. We will see that the existence of self-dual Deligne sheaves does require local conditions, though one consequence of our perspective is the discovery of a new class of spaces, more general than the Goresky-Siegel IP spaces, on which the standard upper-middle perversity intersection homology is self-dual. We also study torsion-sensitive t-structures and categories of perverse sheaves in which our torsion-sensitive Deligne sheaves arise as intermediate extensions of the appropriate analogues of local systems on the regular strata. 

In order to further explain these result and their context, we begin by recalling some historical background.

\paragraph{Background.}
In \cite{GM1}, Goresky and MacPherson introduced intersection homology for a closed oriented $n$-dimensional PL stratified pseudomanifold $X$. These homology groups, denoted $I^{\bar p}H_*(X)$, depend on a \emph{perversity parameter}\footnote{In early work on intersection homology, e.g. \cite{GM1, GM2,Bo,GS83}, perversities were only considered that took the same value on all strata of the same codimension. We employ a slightly revisionist history in this introduction by stating the theorems in a form more consonant with  more general notions of perversity; see \cite{GBF35, GBF23, GBF26}.}  $$\bar p:\{\text{singular strata of $X$}\}\to \Z.$$ They showed that if $D\bar p$ is the complementary perversity, i.e.\  $\bar p(Z)+D\bar p(Z)=\codim(Z)-2$ for all singular strata $Z$, then there is an intersection pairing $$I^{\bar p}H_i(X)\otimes I^{D\bar p}H_{n-i}(X)\to \Z$$ induced at the level of PL chains that becomes nonsingular after tensoring with $\Q$. This provides an important generalization of Poincar\'e duality to non-manifold spaces. 

In \cite{GM2}, and in the broader context  of \emph{topological} stratified pseudomanifolds, Goresky and MacPherson further refined this intersection homology version of Poincar\'e duality  into the statement that there is a quasi-isomorphism of  sheaf complexes\footnote{We will generally omit the generic indexing decoration for complexes except where needed for clarity.} over $X$:
\begin{equation}\label{E: duality}
\mc P_{\bar p}\cong \mc D\mc P_{D\bar p}[-n].
\end{equation}
Here $\mc P_{\bar p}$ denotes the ``Deligne sheaf complex'' with perversity $\bar p$, which is an iteratively-constructed sheaf complex characterized by nice axioms and  whose hypercohomology groups give intersection homology via $\H^i_c\left(X;\mc P_{\bar p}\right)\cong I^{\bar p}H_{n-i}(X)$. The symbol $\mc D$ here denotes the Verdier dualizing functor, $[m]$ denotes a shift so that $(\mc S[m])^i\cong \mc S^{i+m}$ for a sheaf complex $\mc S$, and $\cong$ denotes quasi-isomorphism, i.e\ isomorphism in the derived category; we use the convention throughout that $\mc D\mc S[m]$ means $(\mc D\mc S)[m]$ and not $\mc D(\mc S[m])=(\mc D\mc S)[-m]$.
The stratified pseudomanifold $X$ is no longer required to be compact, but for duality to hold the ground ring of coefficients is required in \cite{GM2} to be a field. 

In \cite{GS83}, Goresky and Siegel explored the duality properties of Deligne sheaves with coefficients in a principal ideal domain. This requires the consideration of torsion information. They demonstrated that in this setting one cannot hope for a version of \eqref{E: duality} in complete generality. The obstruction occurs in the form of torsion in certain local intersection homology groups at the singular points of $X$. This led to the definition of a locally $\bar p$-torsion-free space. More precisely, a stratified pseudomanifold is locally $\bar p$-torsion-free (with respect to the PID $R$) if for each $x$ in each singular stratum $Z$ of codimension $k$ the torsion subgroup of $I^{\bar p}H_{k-2-\bar p(Z)}(L;R)$ vanishes, where $L$ is the link of $x$. If $X$ is such a space, then  \eqref{E: duality} holds with coefficients in $R$, leading to certain other nice ``integral'' properties of duality and homology, such as nonsingular linking pairings and a universal coefficient theorem. 

In \cite{CS91}, Cappell and Shaneson proved a ``superduality'' theorem, which holds in a situation that can be considered somewhat the opposite of that of Goresky and Siegel. Cappell and Shaneson showed that if the stratified pseudomanifold $X$ possesses the property that all local intersection homology groups are torsion, then 
\eqref{E: duality} holds between ``superdual'' perversities $\bar p$ and $\bar q$, meaning $\bar p(Z)+\bar q(Z)=\codim(Z)-1$ for all singular strata $Z$ of $X$. While this statement seems more drastic than that of Goresky-Siegel in terms of the number of degrees for which there is a local intersection homology condition, it follows from the proof that one could  impose this ``torsion only'' condition in just one degree per link\footnote{It is also worth noting that Cappell and Shaneson use local coefficient systems on the complement of the singular locus throughout \cite{CS91} so that their local intersection homology groups are akin to Alexander modules of knots. This explains how it is possible for each local intersection homology group to be torsion, even in degree zero. }.

Deligne sheaves with PID coefficients can also be considered from the broader perspective of the perverse sheaves of Be{\u\i}linson, Bernstein, and Deligne (BBD) \cite{BBD}. While most of \cite{BBD} concerns perverse sheaves with ground ring a field, \cite[Complement 3.3]{BBD} contains the definition of the following t-structure on the derived category $D(X,\Z)$ of sheaves of abelian groups on a space $X$ by taking torsion into account:
\begin{align*}
{}^{n^+}D^{\leq 0}&=\{K\in D(X,\Z)\mid \forall x\in X, H^i(K_x)=0\text{ for $i>1$ and $H^1(K_x)\otimes \Q=0$}\}\\
{}^{n^+}D^{\geq 0}&=\{K\in D(X,\Z)\mid \forall x\in X, H^i(K_x)=0\text{ for $i<0$ and $H^0(K_x)$ is torsion free}\}.
\end{align*}
If $X$ is stratified and equipped with a perversity $\bar p$, then one can glue shifts of such t-structures over strata to obtain a t-structure $({}^{\bar p^+}D^{\leq 0}(X,\Z),{}^{\bar p^+}D^{\geq 0}(X,\Z))$, generalizing the t-structures of \cite[Definition 2.1.2]{BBD}. Verdier duality interchanges this t-structure with the standard perverse t-structure $({}^{D\bar p}D^{\leq 0}(X,\Z),{}^{D\bar p}D^{\geq 0}(X,\Z))$. Torsion in t-structures is also considered abstractly in \cite{Ju09}.

\paragraph{Results.}
Our first principal goal in this paper is to introduce generalized Deligne sheaves for which  a version of \eqref{E: duality} holds over a PID for any topological stratified pseudomanifold. 
The construction will incorporate certain local torsion information in a manner analogous to the above BBD t-structure, but rather than asking our Deligne sheaves to be either ``all torsion'' or ``no torsion'' in certain degrees, we allow mixed situations by  taking as part of our perversity information a set of primes on each stratum. Then, just like the classical Deligne construction, our ``torsion-sensitive Deligne sheaves'' will involve certain cohomology truncations, but for us the perversity information will determine both the truncation degree and the types of torsion that can occur in the cohomology at that degree. We will see that Verdier duality then interchanges the set of primes on a particular stratum with the complementary set of primes. This leads to some interesting duality results, even for relatively simple spaces. 

More specifically, in order to implement our construction, we generalize the notion of perversity from that of a function
$$\bar p:\{\text{singular strata of X}\}\to \Z$$
to that of a function
$$\vec p=(\vec p_1,\vec p_2):\{\text{singular strata of X}\}\to \Z\times \P(\primeset{P}(R)) ,$$
where $\primeset{P}(R)$ is the set of primes (up to unit) of the PID $R$ and $ \P(\primeset{P}(R))$ is its power set (the set of all subsets); note that $\vec p_1$ is itself a perversity in the standard sense. 
We refer to such functions $\vec p$ as ``torsion-sensitive perversities'' or ``ts-perversities,'' and we construct a ``torsion-sensitive Deligne sheaf complex,'' or ``ts-Deligne sheaf,''   $\ms P_{\vec p}$ by a modification of the standard iterated ``pushforward and truncate'' Deligne construction.  
In the case that $\vec p_2(Z)=\emptyset$ for all singular strata $Z$, then $\ms P_{\vec p}$ is quasi-isomorphic to the classical Deligne sheaf $\mc P_{\vec p_1}$. We will see in Section \ref{S: axiomatics} that the ts-Deligne sheaves are characterized by a generalization of the Goresky-MacPherson axioms.  

The complementary ts-perversity $D\vec p$ to a ts-perversity $\vec p$ is defined by letting $(D\vec p)_1$ be the dual perversity to $\vec p_1$ in the standard sense, i.e.\ $(D\vec p)_1(Z)=\codim(Z)-2-\vec p_1(Z)$, while $(D\vec p)_2(Z)$ is defined to be the complement of $\vec p_2(Z)$ in $\primeset{P}(R)$. Our generalized duality theorem, whose precise technical statement can be found in Theorem \ref{T: duality}, then has the form 

\begin{equation}\label{E: ts-duality}
\ms P_{\vec p}\cong \mc D\ms P_{D\vec p}[-n],
\end{equation}
with no local cohomology conditions on the underlying stratified pseudomanifold $X$.
The duality theorems of Goresky-MacPherson, Goresky-Siegel, and Cappell-Shaneson all occur as special cases:
\begin{enumerate}

\item With coefficients over a field, $\ms P_{\vec p}\cong \mc P_{\vec p_1}$, the perversity $\vec p_1$ Deligne sheaf. Then \eqref{E: ts-duality} reduces to the Goresky-MacPherson version of \eqref{E: duality}.

\item When $X$ is locally $\vec p_1$-torsion-free over the ground ring $R$, again $\ms P_{\vec p}\cong \mc P_{\vec p_1}$ and  \eqref{E: ts-duality} reduces to the Goresky-Siegel version of \eqref{E: duality}.

\item If the local intersection homology of $X$ is all torsion and $\vec p_2(Z)=\primeset{P}(R)$ for all singular strata $Z$, then  $\ms P_{\vec p}\cong \mc P_{\vec p_1+1}$, where $\vec p_1+1$ is the perversity defined by $(\vec p_1+1)(Z)=\vec p_1(Z)+1$. Also, since this forces $D\vec p_2(Z)=\emptyset$ for all $Z$,  we have $\ms P_{D\vec p}\cong \mc P_{D\vec p_1}$ and \eqref{E: ts-duality} reduces to the Cappell-Shaneson version of \eqref{E: duality}.

\end{enumerate}

After demonstrating these general duality results, we turn in Section \ref{S: self dual} to the important question of when our ts-Deligne sheaves might be self dual, i.e.\ when are $\ms P_{\vec p}$ and $\mc D\ms P_{\vec p}$ quasi-isomorphic up to degree shifts? Such situations lead to further key invariants such as signatures. In order to achieve such self-duality, local conditions on the cohomology of links come back into play, and we recover such torsion and torsion-free conditions as were observed in \cite{GS83,Pa90,CS91}. However, we also make what we believe to be a new observation: that the Goresky-Siegel torsion-free conditions on odd-codimension strata and the Cappell-Shaneson torsion conditions on even-codimension strata can be fused to define a class of spaces more general than the well-known IP spaces \cite{GS83,Pa90} on which the classical upper-middle perversity intersection homology $I^{\bar n}H_*(X;\Z)$ admits self-duality. We also explore a class of spaces on which the lower-middle perversity intersection homology groups $I^{\bar m}H_*(X;\mc E)$ are self dual assuming that $\mc E$ is a coefficient system of torsion modules, generalizing another observation from \cite{CS91}. 

Following our study of ts-Deligne sheaves, we consider the more general context for such sheaf complexes by introducing t-structures on the derived category of sheaf complexes on $X$ whose associated ts-perverse sheaves (i.e.\ the objects living in the heart of the t-structure) similarly depend upon varying truncation degrees and torsion information on each stratum. Analogously to classical results about Deligne sheaves, our ts-Deligne sheaves turn out to be the intermediate extensions in these t-structures (Proposition \ref{P: deligne is ie}), and the general machinery of t-structures then leads to an alternative proof of Theorem \ref{T: duality}, our main duality theorem --- see Remark \ref{R: other dual}. We also show in Example \ref{E: point noeth} that in general the hearts of these t-structures are neither Noetherian nor Artinian categories, in contrast to the case of perverse sheaves with complex coefficients on complex varieties which are well known to be both \cite[Theorem 4.3.1]{BBD}. We do show, however, in Theorem \ref{T: noether} that the hearts are Artinian when we allow all torsion and Noetherian when we allow none. 

The final section of the paper explores in some detail the case of a pseudomanifold with just one isolated singularity and relates the abstract duality results of the preceding sections with more hands-on computations involving the homology groups of the manifold with boundary obtained by removing a neighborhood of the singularity. We see in this case that some of our intersection homology results correspond to well-known facts from manifold theory but that others are not so obvious from a more classical point of view.

\medskip

\paragraph{The main technical idea.}
Before concluding the introduction, let us attempt to provide some brief, but more technical, motivation for why the Goresky-Siegel or Cappell-Shaneson conditions are necessary for duality over a PID in the classical Deligne-sheaf formulation of intersection homology and how those conditions motivate our definition of ts-Deligne sheaves $\ms P_{\vec p}$. To simplify this discussion, we work over $\Z$ and suppose perversity values depend only on codimension as in \cite{GM1}.  We also will not attempt to get too deeply into technical details here; we will limit ourselves to presenting the basic idea. 

Recall that the Deligne sheaf is defined by a process of consecutive pushforwards and truncations. In the original Goresky-MacPherson formulation \cite{GM2}, if $X=X^n\supset X^{n-2}\supset \cdots$ is a stratified pseudomanifold and  $\mc P_{\bar p}(k)$ is the Deligne sheaf defined over $X-X^{n-k}$ (or, if $k=2$, $\mc P_{\bar p}(2)$ is a locally constant sheaf of coefficients over $\Z$), then one extends  $\mc P_{\bar p}(k)$ to  $\mc P_{\bar p}(k+1)$ on $X-X^{n-k-1}$ as $$\mc P_{\bar p}(k+1)=\tau_{\leq \bar p(k)}Ri_{k*}\mc P_{\bar p}(k),$$
where $i_k$ is the inclusion $X-X^{n-k}\into X-X^{n-k-1}$, $\tau$ is the sheaf complex truncation functor, and $\bar p(k)$ is the common value of $\bar p$ on all strata of codimension $k$. In particular, it follows that at a point $x\in X^{n-k}$, $k\geq 2$, with link $L$,  we have $H^i((\mc P_{\bar p})_x)=0$ for $i>\bar p(k)$, while for $i\leq \bar p(k)$, we have $H^i((\mc P_{\bar p})_x)=\H^{i}(L;\mc P_{\bar p})$, the hypercohomology of the link. 

On the other hand, letting $\bar q=D\bar p$ and using the properties of Verdier duality \cite[Section V.7.7]{Bo}, one obtains a universal coefficient-flavored calculation for the cohomology of the dual that looks like this\footnote{See Section \ref{S: duality} for more details.}:
$$H^i((\mc D\mc P_{\bar q}[-n])_x)\cong  \Hom(H^{n-i}(f^!_x\mc P_{\bar q}),\Z)\oplus \Ext(H^{n-i+1}(f^!_x\mc P_{\bar q}),\Z),$$
where $f_x:x\to X$ is the inclusion. If we were working instead with  coefficients in a field  $F$, the $\Ext$ term would vanish, and so $H^i((\mc D\mc P_{\bar q}[-n])_x)\cong  \Hom(H^{n-i}(f^!_x\mc P_{\bar q}),F)$. One of the steps in proving the Goresky-MacPherson duality isomorphism \eqref{E: duality} then involves showing\footnote{For the technicalities, see \cite{Bo}, in particular Step $(b)$ of the proof of Theorem V.9.8 and the $(2b)$ implies $(1'b)$ part of the proof of Proposition V.4.9.}  that $H^{n-i}(f^!_x\mc P_{\bar q})=0$ for $i>\bar p(k)$, which is compatible with our computation for $H^i((\mc P_{\bar p})_x)$. 
With a bit more work, one then shows that the sheaf complexes  $\mc D\mc P_{\bar q}[-n]$ and $\mc P_{\bar p}$ are in fact quasi-isomorphic.

However, with $\Z$ coefficients  we have the following problem: From the truncations in the definition of $\mc P_{\bar p}$, we must have $H^{\bar p(k)+1}((\mc P_{\bar p})_x)=0$. Meanwhile, from the duality computation, we have 
$$H^{\bar p(k)+1}((\mc D\mc P_{\bar q}[-n])_x)\cong  \Hom(H^{n-(\bar p(k)+1)}(f^!_x\mc P_{\bar q}),\Z)\oplus \Ext(H^{n-(\bar p(k)+1)+1}(f^!_x\mc P_{\bar q}),\Z).$$ The observation of the last paragraph that  $H^{n-i}(f^!_x\mc P_{\bar q})=0$ for $i>\bar p(k)$ holds for any PID coefficients and implies that $H^{n-(\bar p(k)+1)}(f^!_x\mc P_{\bar q})=0$. However, it will not generally be true that $H^{n-\bar p(k)}(f^!_x\mc P_{\bar q})=0$, and so 
$$H^{\bar p(k)+1}((\mc D\mc P_{\bar q}[-n])_x)\cong  \Ext(H^{n-\bar p(k)}(f^!_x\mc P_{\bar q}),\Z)$$ 
might not be zero, in which case we could not have $H^{\bar p(k)+1}((\mc D\mc P_{\bar q}[-n])_x)\cong H^{\bar p(k)+1}((\mc P_{\bar p})_x)$. However, $H^{n-\bar p(k)}(f^!_x\mc P_{\bar q})$  will be finitely generated and 
if it  were also torsion-free, then $H^{\bar p(k)+1}((\mc D\mc P_{\bar q}[-n])_x)$ would indeed vanish! It turns out that one could then continue on to complete the argument that $\mc P_{\bar p}$ and $\mc D\mc P_{\bar q}[-n]$ are quasi-isomorphic. This is the source of the Goresky-Siegel condition, which, with a bit more computation, implies that $H^{n-\bar p(k)}(f^!_x\mc P_{\bar q})$ is indeed torsion free. See \cite{GS83} for details (and be mindful of the different indexing convention!).

The Cappell-Shaneson computation is similar but ``from the other side.'' If we extend our perversity from $\bar p$ to $\bar p+1$ (but keep $\bar q$ the same), then it is acceptable to have $H^{\bar p(k)+1}((\mc D\mc P_{\bar q}[-n])_x)$ not vanish, but as we have seen, it must be isomorphic to the torsion group  $\Ext(H^{n-\bar p(k)}(f^!_x\mc P_{\bar q}),\Z)$, as $\Hom(H^{n-(\bar p(k)+1)}(f^!_x\mc P_{\bar q}),\Z)$ still vanishes. This is problematic if $H^{\bar p(k)+1}((\mc D\mc P_{\bar q}[-n])_x)$ is not all torsion, but if we assume it is torsion, then again this turns out to be enough to dodge catastrophe and allow the quasi-isomorphism argument to go through. The Cappell-Shaneson torsion condition ensures this.

The preceding arguments  lead one to the thought that it might be possible to make the duality quasi-isomorphism arguments ``come out alright'' provided one is able to exercise sufficient control on when (and what kind of) torsion is allowed to crop up in local intersection homology groups and when it is not. Indeed, such control at the level of spaces is precisely the idea behind the Goresky-Siegel and Cappell-Shaneson conditions. We will pursue an alternative route by allowing the pseudomanifold to be arbitrary while instead building such torsion control into the definition of the Deligne sheaf complex. This is precisely what the second component $\vec p_2$ of our torsion sensitive perversities will do: it is a switch indicating what kind of torsion the strata are permitted to have in their local intersection homology groups at the cut-off dimension. This information is assimilated into the ts-Deligne sheaf via a modified ``torsion-tipped'' version of the truncation functor that, rather than simply cutting off all stalk cohomology of a sheaf complex at a given dimension, permits a certain torsion subgroup of the stalk cohomology to continue to exist for one dimension above the cutoff, analogously to the Be{\u\i}linson-Bernstein-Deligne construction. The ts-Deligne sheaf then incorporates this torsion-tipped truncation according to the instructions given by $\vec p_2$. 

This is the main idea of the paper. The rest is details!

\paragraph{A conjecture about topological invariance.}
In addition to duality, another key feature of classical intersection homology is topological invariance, i.e.\ independence of pseudomanifold stratification. This property does not hold in general but rather requires spaces with no codimension one strata and  perversities that depend only on codimension and satisfy $\bar p(2)=0$ and $\bar p(k)\leq \bar p(k+1)\leq \bar p(k)+1$ for  $k\geq 2$; see \cite[Section 4]{GM2}, \cite[Section V.4]{Bo}, \cite[Section 5.5]{GBF35}. In our torsion-sensitive context, even this condition in the first component $\vec p_1$ of a ts-perversity $\vec p$ is not sufficient, as one can see by considering toy examples on spaces of the form $\R\times cL\cong c(SL)$, with $S$ denoting suspension and letting these homeomorphic spaces have the two different implied stratifications. However, based on such examples, we conjecture that topological invariance does hold for ts-perversities $\vec p$ such that $\vec p_1$ satisfies the above conditions, $\vec p_2(k+1)\supset \vec p_2(k)$ when $\vec p_1(k+1)=\vec p_1(k)$, and $\vec p_2(k+1)\subset \vec p_2(k)$ when $\vec p_1(k+1)=\vec p_1(k)+1$.

\paragraph{Outline of the paper.}
Section \ref{S: algebra} contains some algebraic preliminaries. 
In Section \ref{S: TTT}, we introduce the torsion-tipped truncation functor. Then in Section \ref{S: TSD}, we construct the torsion-sensitive Deligne sheaf, demonstrate that it satisfies a set of characterizing axioms generalizing the Deligne sheaf axioms of Goresky and MacPherson \cite{GM2}, and prove our duality theorem, Theorem \ref{T: duality}. Self-duality is treated in Section \ref{S: self dual}. Section \ref{S: perverse} contains our study of torsion-sensitive t-structures and ts-perverse sheaves.
 Finally, in Section \ref{S: man}, we conclude with an example, computing the hypercohomology groups of ts-Deligne sheaves for pseudomanifolds with isolated singularities and showing in some cases how their duality relates to classical Poincar\'e-Lefschetz duality for manifolds with boundary. We obtain some results concerning manifold theory that are not so obvious from more direct approaches.

\paragraph{Prerequisites and assumptions.} 

We assume the reader has some background in intersection homology and the derived category of sheaf complexes, which we work in throughout, along the lines of Goresky-MacPherson \cite{GM2}, Borel \cite{Bo}, or Banagl \cite{BaIH}. We will also freely utilize arbitrary perversity functions, for which background can be found in \cite{GBF35, GBF23, GBF26}. Accordingly, we also allow stratified pseudomanifolds to possess codimension one strata. Some knowledge of t-structures and perverse sheaves may be useful, but not critical, in Section \ref{S: perverse}; further references are listed there.

\paragraph{Acknowledgments.} 
The author is indebted to the referees for suggestions that substantially improved this paper. The original manuscript contained only the material from the Deligne sheaf point of view, and it was one of the referees who suggested a detailed exploration from the perspective of perverse sheaves. This referee also recommended that it should be clarified what the self-dual torsion-sensitive perversities are, leading to Section \ref{S: self dual} and the results contained therein. A second referee made several further suggestions that greatly improved the exposition.

In addition, the author thanks Jim McClure for conversations and questions that led to the initial ideas that evolved into this paper. Thanks are also due to Sylvain Cappell for pointing out that the sets of primes don't have to be all or nothing and for introducing the author to intersection homology in the first place. Thanks to Scott Nollet and Loren Spice for many helpful conversations.

\section{Algebraic preliminaries}\label{S: algebra}

Throughout the paper, $R$ will denote a principal ideal domain (PID). We let $\primeset{P}(R)$ be the set of equivalence classes of primes of $R$, where two primes $p,q\in R$ are equivalent if $p=uq$ for some unit $u\in R$. In practice, we fix a representative of each equivalence class and identify the class with its representative prime, i.e. we think of $\primeset{P}(R)$ as a set of specific primes, one from each equivalence class. 

Let $\wp\subset \primeset{P}(R)$ be a set of primes of $R$. Define the \emph{span of $\wp$}, $S(\wp)$, to be the set of products of powers of primes in $\wp$, i.e.\ $S(\wp)=\left\{n\in R\middle| n=\prod_{i=1}^s p_i^{m_i}, \text{ where }p_i\in\wp\text{ and } m_i,s\in \Z_{\geq 0}\right\}.$  We allow $s=0$ (which is necessary when $\wp=\emptyset$), and in this case we interpret the product to be $1$. Then $S(\emptyset)=\{1\}$, and $\{1\}\subset S(\wp)$ for all $\wp$, so, in particular, $S(\wp)$ is never empty. Also, notice that $0\notin S(\wp)$ for any $\wp$. 

If $M$ is an $R$-module, we define $T^{\wp}M$ to be the submodule of elements of $M$ annihilated by elements of $S(\wp)$, i.e.\ $T^{\wp}M=\{x\in M\mid \exists n\in S(\wp)\text{ such that }nx=0\}.$

\begin{definition}\label{D: torsion}
We will refer to $T^{\wp}M$ as the \emph{$\wp$-torsion submodule} of $M$. 
If $T^{\wp}M=M$ we will say that $M$ is \emph{$\wp$-torsion}, and if $T^{\wp}M=0$ we will say that $M$ is \emph{$\wp$-torsion free}. 
\end{definition}

Note that if $\wp=\emptyset$, then  $T^{\wp}M=0$ for any $M$. So an $\emptyset$-torsion module is trivial, and every module is $\emptyset$-torsion free.

Recall that any finitely-generated $R$-module $M$ for a PID $R$ can be written as a direct sum $M\cong R^{r_M}\oplus \bigoplus_p M(p)$, where $r_M$ is the rank of $M$, the primes $p$ range over $\primeset{P}(R)$ (though there will only be a finite number of non-trivial summands), and each $M(p)$ is isomorphic to a direct sum $R/(p^{\nu_1})\oplus\cdots\oplus R/(p^{\nu_{s_p}})$ with each $\nu_i$ a positive integer \cite[Section III.7]{LANG}. Note that $T^{\wp}M\cong \bigoplus_{p\in \wp} M(p)$ and $M/T^{\wp}M\cong R^{r_M}\oplus \bigoplus_{p\notin\wp} M(p)$. Furthermore, by elementary computations, $\Hom(M,R)\cong R^{r_M}$ and $\Ext(M,R)\cong T^{P(R)}M$.

\section{Torsion-tipped truncation}\label{S: TTT}

In this section we define our torsion-tipped truncation functors. 
Given an integer $k$ and a set $\wp$ of primes of the PID $R$, we first define a global endofunctor $\ttau^\wp_{\leq k}$ in the category of cohomologically indexed complexes of sheaves of $R$-modules on a space $X$. Then in Subsection \ref{S: local trunc} we will define a localized version of this truncation functor that can truncate in different degrees and with different primes on different subsets of $X$.

We begin by defining $\ttau^{\wp}_{\leq k}$ as an endofunctor of presheaves using the following notion of weak $\wp$-boundaries. 

\begin{definition}
Let $A$ be a presheaf complex on $X$ with boundary map $d$. Let $W^{\wp}A^j$ be the \emph{presheaf of weak $\wp$-boundaries in degree $j$}, defined by
 $$W^{\wp}A^j(U)=\{s\in A^j(U)\mid ds=0\text{ and } \exists n\in S(\wp)\text{ such that } ns\in \im(d: A^{j-1}(U)\to A^j(U))\}.$$ 
In particular, if  $\wp=\emptyset$ then $W^{\emptyset}A^j(U)=\im(d: A^{j-1}(U)\to A^j(U))$. Furthermore, if  $\wp\subset \wp'$ then $W^\wp A^j(U)\subset W^{\wp'} A^j(U) $. 
\end{definition}

The following lemma is elementary:

\begin{lemma}\label{L: pres}
The assignment $U\to W^{\wp}A^j(U)$ for open sets $U\subset X$ determines a presheaf. 
\end{lemma}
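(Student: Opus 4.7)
The plan is to verify the two presheaf axioms (restriction along identity is identity, and restrictions compose) for the assignment $U \mapsto W^{\wp}A^j(U)$, which reduces to checking that the restriction maps of the ambient presheaf $A^j$ send weak $\wp$-boundaries to weak $\wp$-boundaries.

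First, I would fix open sets $V \subset U \subset X$ and write $\rho^U_V : A^j(U) \to A^j(V)$ for the restriction map of $A^j$, noting that since $A^*$ is a presheaf complex, these restriction maps commute with the differential $d$. The key step is then to show that $\rho^U_V$ carries $W^{\wp}A^j(U)$ into $W^{\wp}A^j(V)$. Given $s \in W^{\wp}A^j(U)$, by definition there exist $n \in S(\wp)$ and $t \in A^{j-1}(U)$ with $ns = dt$. Applying $\rho^U_V$ and using $R$-linearity together with compatibility of restriction with $d$, we get
$$n \, \rho^U_V(s) = \rho^U_V(ns) = \rho^U_V(dt) = d\,\rho^U_V(t),$$
so $\rho^U_V(s) \in W^{\wp}A^j(V)$ with the same $n \in S(\wp)$ as witness.

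Finally, I would note that the presheaf axioms for the restricted maps are inherited immediately from those of $A^j$: $\rho^U_U$ is the identity on $A^j(U)$ and hence on the submodule $W^{\wp}A^j(U)$, and for $W \subset V \subset U$ the equality $\rho^V_W \circ \rho^U_V = \rho^U_W$ on $A^j$ restricts to the same equality on weak $\wp$-boundaries. There is no real obstacle here; the only substantive point is the computation displayed above, which uses nothing beyond $R$-linearity of restriction and the fact that restriction is a chain map. No sheafification or exactness arguments are needed at this stage.
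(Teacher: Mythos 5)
Your verification that restriction maps carry weak $\wp$-boundaries to weak $\wp$-boundaries is correct and matches the second half of the paper's argument exactly: restriction is a chain map, so from $ns = dt$ on $U$ one gets $n\,(s|_V) = d(t|_V)$ on $V$. However, your proof has a real gap: you refer to $W^{\wp}A^j(U)$ as ``the submodule'' without ever checking that it \emph{is} an $R$-submodule of $A^j(U)$. Since the lemma is about a presheaf of $R$-modules (the ambient $A^j$ is such, and the torsion-tipped truncation built from $W^{\wp}A^j$ must be a presheaf complex of $R$-modules), this verification is part of the statement and is where the only nontrivial content lies. In particular, closure under addition is not formal: if $s,t \in W^{\wp}A^j(U)$ with $ms = du$ and $nt = dv$ for some $m,n \in S(\wp)$, you need a single element of $S(\wp)$ annihilating $s+t$ modulo boundaries, and the point is that $mn(s+t) = n(ms) + m(nt) = d(nu + mv)$ together with the multiplicative closure of $S(\wp)$ (which is why $S(\wp)$ was defined as products of prime powers in $\wp$) gives $mn \in S(\wp)$. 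Closure under scalar multiplication and negation is easier ($m(rs) = r(ms)$), but it still needs saying. The paper devotes the first half of its proof to exactly this module check before turning to the restriction maps; you should add it.
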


\begin{definition}\label{D: presheaf trunc}
We define the \emph{$\wp$-torsion-tipped truncation functor} on a presheaf complex $A$ by   
\begin{equation*}
(\ttau^{\wp}_{\leq k}A)^i=
\begin{cases}
0, & i>k+1,\\
W^{\wp}A^{k+1}, &i=k+1,\\
A^i, &i\leq k.
\end{cases}
\end{equation*}
If $f:A\to B$ is a map of presheaf complexes then $\ttau^{\wp}_{\leq k}f$ is given by restriction of $f$.
\end{definition}

\begin{lemma}\label{L: pretrunc}
$\ttau^{\wp}_{\leq k}$ is an endofunctor of presheaf complexes over $X$. Furthermore, if $k\leq k'$ and $\wp\subset \wp'$ there are monomorphisms $\ttau^{\wp}_{\leq k}A\into \ttau^{\wp'}_{\leq k'}A \into A$, natural in $A$. 
\end{lemma}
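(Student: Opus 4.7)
The plan is to verify the two assertions separately, with the first being essentially a routine check that $\ttau^{\wp}_{\leq k}A^*$ really is a subcomplex and that chain maps restrict to chain maps, and the second being a case analysis on the degree $i$.

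First I would show that $\ttau^{\wp}_{\leq k}A^*$ is itself a presheaf complex. The only nontrivial point is to verify that the presheaf differential $d\colon A^k\to A^{k+1}$ takes values in $W^{\wp}A^{k+1}$ and that the composite $A^k\to W^{\wp}A^{k+1}\to 0$ still squares to zero (the latter being immediate). For the former, observe that for any $s\in \im(d\colon A^k(U)\to A^{k+1}(U))$ we have $1\cdot s\in \im(d)$, and $1\in S(\wp)$ by definition of $S(\wp)$; therefore $\im(d\colon A^k\to A^{k+1})\subset W^{\wp}A^{k+1}$, so the truncated differential is well-defined. In degrees $\leq k-1$ nothing changes, and above $k+1$ everything is zero, so $\ttau^{\wp}_{\leq k}A^*$ is a presheaf complex.

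Next I would verify functoriality. Given a map $f\colon A^*\to B^*$ of presheaf complexes, the only degree where restriction is nonobvious is $k+1$, where we must see $f(W^{\wp}A^{k+1})\subset W^{\wp}B^{k+1}$. If $s\in W^{\wp}A^{k+1}(U)$ with $ns=du$ for some $n\in S(\wp)$ and $u\in A^k(U)$, then using that $f$ commutes with $d$ we get $d(f(u))=f(du)=f(ns)=nf(s)$, which exhibits $f(s)\in W^{\wp}B^{k+1}(U)$. Composition and identities are preserved trivially, so $\ttau^{\wp}_{\leq k}$ is indeed an endofunctor.

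For the inclusions, assume $k\leq k'$ and $\wp\subset \wp'$. I would define the map $\ttau^{\wp}_{\leq k}A^*\to \ttau^{\wp'}_{\leq k'}A^*$ degreewise by
\begin{itemize}
\item the identity of $A^i$ when $i\leq k$;
\item the inclusion $W^{\wp}A^{k+1}\hookrightarrow A^{k+1}$ when $i=k+1\leq k'$;
\item the inclusion $W^{\wp}A^{k+1}\hookrightarrow W^{\wp'}A^{k+1}$ (which uses $\wp\subset \wp'$) when $i=k+1=k'+1$, i.e.\ $k=k'$;
\item the zero map out of $0$ when $i>k+1$.
\end{itemize}
All cases are compatible with the differentials because the differentials in both complexes are either restrictions of $d_{A^*}$ or zero. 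Compatibility with the restriction maps of the presheaves is inherited from that of $A^*$. The composition with the analogous inclusion $\ttau^{\wp'}_{\leq k'}A^*\hookrightarrow A^*$ (defined by the identity in degrees $\leq k'$, the inclusion $W^{\wp'}A^{k'+1}\hookrightarrow A^{k'+1}$, and zero above) is easily checked to give a monomorphism onto the evident subcomplex of $A^*$. Naturality in $A^*$ is immediate: for a map $f\colon A^*\to B^*$, the square whose horizontal arrows are any of these inclusions and whose vertical arrows are $\ttau^{\wp}_{\leq k}f$, $\ttau^{\wp'}_{\leq k'}f$, or $f$ itself commutes degreewise, since in each degree both composites agree with the appropriate restriction of $f$.

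I do not anticipate a serious obstacle: the only genuine content is the observation that $1\in S(\wp)$ (so $\im d\subset W^{\wp}A^{k+1}$) and the observation that $f$ commutes with $d$ (so weak $\wp$-boundaries map to weak $\wp$-boundaries). Everything else is bookkeeping on degrees.
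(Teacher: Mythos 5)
Your proof is correct and follows essentially the same route as the paper's: the complex structure rests on the observation that $\im(d)=W^{\emptyset}A^{k+1}\subset W^{\wp}A^{k+1}$ (your ``$1\in S(\wp)$'' remark), and functoriality on the computation $d(f(u))=f(du)=f(ns)=nf(s)$. The paper dismisses the inclusions and their naturality as ``clear,'' so your explicit degreewise case analysis is, if anything, more detailed than the original.
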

\begin{proof}
If $A$ is a presheaf complex then $\ttau^{\wp}_{\leq k}A$ is a presheaf complex: We have a presheaf in each degree by Lemma \ref{L: pres}. Furthermore, as already observed, $\im(d:A^{k}\to A^{k+1})=W^\emptyset A^{k+1} \subset W^{\wp}A^{k+1}$, so one readily checks that restriction commutes with boundaries. 

Now suppose $f:A\to B$ is a chain map of presheaf complexes. Then if $du=ns$ for some $u\in A^k$, $s\in A^{k+1}$ with $ds=0$ and $n\in S(\wp)$, we see that $df(s)=f(ds)=0$ and $df(u)=f(du)=f(ns)=nf(s)$, so $s\in W^{\wp}A^{k+1}$ implies $f(s)\in W^{\wp}B^{k+1}$. So $f$ induces in the obvious way a map $\ttau^{\wp}_{\leq k}(f)$, and $\ttau^{\wp}_{\leq k}$ is a functor. 

For the second statement, the inclusions are clear and the naturality follows.
\end{proof}

\begin{lemma}\label{L: presheaf ttau}
For any open $U\subset X$,
\begin{equation*}
H^i\left(\ttau^{\wp}_{\leq k} A(U)\right)\cong
\begin{cases}
0, & i>k+1,\\
T^{\wp}H^{k+1}(A(U)), & i=k+1,\\
H^i(A(U)),& i\leq k.
\end{cases}
\end{equation*}
Furthermore, the homology isomorphisms or torsion submodule isomorphisms, in the respective degrees, are induced by the inclusion $\ttau^{\wp}_{\leq k}A \into A$. 
\end{lemma}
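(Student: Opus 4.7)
The plan is to compute $H^i(\ttau^\wp_{\leq k} A^*(U))$ degree by degree, working directly from the explicit description of the truncated complex in Definition~\ref{D: presheaf trunc}, and to verify throughout that the isomorphisms are induced by the inclusion of Lemma~\ref{L: pretrunc}.

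First I would dispose of the easy ranges. For $i > k+1$ the complex vanishes, giving $H^i = 0$. For $i < k$ the truncated complex coincides with $A^*(U)$ in a neighborhood of degree $i$, so the identity (a special case of the inclusion) induces $H^i(\ttau^\wp_{\leq k}A^*(U)) \cong H^i(A^*(U))$. For $i = k$, the truncated differential $A^k(U) \to W^\wp A^{k+1}(U)$ is just the codomain-restriction of the original $d$---well-defined because $\im d = W^\emptyset A^{k+1} \subseteq W^\wp A^{k+1}$, as noted in Lemma~\ref{L: pretrunc}---so its kernel agrees with $\ker(d:A^k(U)\to A^{k+1}(U))$, while the image from $A^{k-1}(U)$ is unchanged. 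Hence $H^k(\ttau^\wp_{\leq k}A^*(U)) \cong H^k(A^*(U))$, induced by inclusion.

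The main step is the case $i = k+1$, where the truncation has zero differential out of degree $k+1$, giving
\[
H^{k+1}(\ttau^\wp_{\leq k}A^*(U)) = W^\wp A^{k+1}(U)/\im\bigl(d:A^k(U)\to A^{k+1}(U)\bigr).
\]
I would first observe that elements of $W^\wp A^{k+1}(U)$ are cocycles of $A^*(U)$: if $ns = du$ for some $n\in S(\wp)$, then $n\cdot ds = d(du) = 0$, forcing $ds = 0$. The inclusion therefore descends to a well-defined map $W^\wp A^{k+1}(U)/\im d \to H^{k+1}(A^*(U))$. Its image consists of those classes $[s]$ with $ns \in \im d$ for some $n \in S(\wp)$---equivalently, the classes annihilated by some element of $S(\wp)$---which is precisely $T^\wp H^{k+1}(A^*(U))$. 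Injectivity is automatic, since an $s \in W^\wp A^{k+1}(U)$ that bounds in $A^*(U)$ already bounds at the cochain level in $A^k(U)$, hence represents zero in the truncation. The main obstacle is the bookkeeping at degree $k+1$: one must translate carefully between the cochain-level definition of $W^\wp$ and the cohomology-level definition of $T^\wp$, but this is exactly what the notion of ``weak $\wp$-boundary'' encodes. Naturality in $A^*$ is automatic since every isomorphism is induced by the inclusion.
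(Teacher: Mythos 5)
Your proof is correct and follows essentially the same route as the paper's: you identify the degrees $i \neq k+1$ as immediate, observe that elements of $W^\wp A^{k+1}(U)$ are necessarily cocycles so the inclusion descends to a map on cohomology, and then verify surjectivity onto $T^\wp H^{k+1}(A^*(U))$ and injectivity by unwinding the definition of weak $\wp$-boundary. The paper compresses the $i \leq k$ cases to a single word (``trivial'') where you spell out the codomain-restriction argument in degree $k$, but the substance is identical.
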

\begin{proof}
This is trivial in all degrees save $i=k+1$. The chain inclusion $\ttau^{\wp}_{\leq k} A(U)\to A(U)$, induces a map $f: H^{k+1}(\ttau^{\wp}_{\leq k} A(U))\to H^{k+1}(A(U))$. 
If $s\in W^{\wp}A^{k+1}(U)$, then for some $n\in S(\wp), u\in A^k$, we have $ns=du$, so the image of $f$ must lie in $T^{\wp}H^{k+1}(A(U))$. Conversely, given a cycle $s$ representing an element of $T^{\wp}H^{k+1}(A(U))$, by the definition of  $T^\wp H^{k+1}(A(U))$ there must be some $n\in S(\wp)$ and $u\in A^k(U)$  such that
$ns=du$. Thus $f$ is surjective. Now suppose $s\in W^{\wp}A^{k+1}(U)$ and $f(s)=0$ in $H^{k+1}(A(U))$. Then there is a $u\in A^{k}(U)$ such that $du=s$. But then this relation also holds in $\ttau^{\wp}_{\leq k} A(U)$ and $s$ represents $0$ in $H^{k+1}(\ttau^{\wp}_{\leq k} A(U))$. Thus $f$ is an isomorphism $H^{k+1}(\ttau^{\wp}_{\leq k} A(U))\to
T^{\wp}H^{k+1}(A(U))$.
\end{proof}

\begin{remark}\label{R: empty2}
If $\wp=\emptyset$, then the lemma demonstrates that $\ttau^{\wp}_{\leq k} A(U)$ has the cohomology we obtain from the standard truncation functor $\tau_{\leq k} A(U)$ with
\begin{equation*}
(\tau_{\leq k} A(U))^i=\begin{cases}
0,&i>k\\
\ker(d),&i=k\\
 A^i(U), &i<k.
\end{cases}
\end{equation*}
In fact, it is not difficult to see that this cohomology isomorphism is induced by an inclusion $\tau_{\leq k} A(U)\into \ttau^{\wp}_{\leq k} A(U)$.
\end{remark}

We can now extend $\ttau^{\wp}_{\leq k}$ to a functor of sheaves by sheafification:

\begin{definition} 
If $\ms S$ is a sheaf complex on $X$, define the \emph{$\wp$-torsion-tipped truncation} $\ttau^{\wp}_{\leq k} \ms S$ as the sheafification of the presheaf $U\to \ttau^{\wp}_{\leq k} (\ms S(U))$.  If $f:\ms S\to \ms T$ is a map of sheaf complexes, then $\ttau^{\wp}_{\leq k}f: \ttau^{\wp}_{\leq k}\ms S\to \ttau^{\wp}_{\leq k}\ms T$ is the map induced by sheafification of the presheaf map $\ttau^{\wp}_{\leq k}f$ of Definition \ref{D: presheaf trunc}.  
\end{definition}

\begin{remark}\label{R: mono}
Due to Lemma \ref{L: pretrunc} and the exactness of sheafification there are natural monomorphisms  $\ttau^{\wp}_{\leq k}\ms S \into \ttau^{\wp'}_{\leq k'}\ms S \into\ms S$ whenever $k\leq k'$ and $\wp\subset \wp'$. 
\end{remark}

\begin{lemma}\label{L: sheaf ttau}
Suppose $\ms S$ is a sheaf complex on $X$ and $x\in X$. Then, 
\begin{equation*}
H^i\left(\left(\ttau^{\wp}_{\leq k} \ms S\right)_x\right)\cong
\begin{cases}
0, & i>k+1,\\
T^{\wp}H^{k+1}(\ms S_x), & i=k+1,\\
H^i(\ms S_x),& i\leq k.
\end{cases}
\end{equation*}
Furthermore, the homology isomorphisms or torsion submodule isomorphisms, in the respective degrees, are induced by the inclusion $\ttau^{\wp}_{\leq k}\ms S \into \ms S$. 
\end{lemma}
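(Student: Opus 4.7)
The plan is to deduce Lemma \ref{L: sheaf ttau} from its presheaf-level analogue, Lemma \ref{L: presheaf ttau}, by taking direct limits over neighborhoods of $x$. The sheaf $\ttau^{\wp}_{\leq k}\ms S^*$ is defined to be the sheafification of the presheaf $U \mapsto \ttau^{\wp}_{\leq k}(\ms S^*(U))$, and since sheafification preserves stalks, we have a canonical identification
\[
\left(\ttau^{\wp}_{\leq k}\ms S^*\right)_x^i \;\cong\; \varinjlim_{x\in U}\left(\ttau^{\wp}_{\leq k}\ms S^*(U)\right)^i
\]
at every degree $i$. Because filtered colimits of $R$-modules are exact, they commute with taking cohomology of complexes, so
\[
H^i\!\left(\left(\ttau^{\wp}_{\leq k}\ms S^*\right)_x\right) \;\cong\; \varinjlim_{x\in U} H^i\!\left(\ttau^{\wp}_{\leq k}\ms S^*(U)\right).
\]

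Now I would plug in the three cases of Lemma \ref{L: presheaf ttau}. For $i>k+1$ the colimit is zero, and for $i\leq k$ the colimit is $\varinjlim_{x\in U} H^i(\ms S^*(U)) = H^i(\ms S^*_x)$, again by exactness of filtered colimits (this is the standard fact that cohomology of the stalk equals the stalk of cohomology). Both cases clearly come from the natural map $\ttau^{\wp}_{\leq k}\ms S^* \into \ms S^*$, since at the presheaf level Lemma \ref{L: presheaf ttau} already says the isomorphism is induced by the analogous presheaf inclusion, and passing to stalks is functorial.

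The one case that deserves care is $i=k+1$: I need to identify $\varinjlim_{x\in U} T^{\wp} H^{k+1}(\ms S^*(U))$ with $T^{\wp} H^{k+1}(\ms S^*_x)$. This is the assertion that the functor $T^{\wp}$ commutes with filtered colimits. The cleanest way to see this is directly from the definition: a germ in $T^{\wp} H^{k+1}(\ms S^*_x)$ is represented by a class $[s]\in H^{k+1}(\ms S^*(U))$ for some neighborhood $U$ of $x$ together with an element $n\in S(\wp)$ such that $n[s]=0$ in $H^{k+1}(\ms S^*_x)$; the vanishing of $n[s]$ in the stalk means that on some smaller neighborhood $V\subseteq U$ of $x$ we have $n s|_V = du$ for some $u\in \ms S^k(V)$, so $[s|_V]\in T^{\wp} H^{k+1}(\ms S^*(V))$, and this shows surjectivity of the natural map $\varinjlim T^{\wp} H^{k+1}(\ms S^*(U))\to T^{\wp} H^{k+1}(\ms S^*_x)$. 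Injectivity is even easier: a germ in the colimit that vanishes in the stalk already vanishes on some neighborhood, hence is zero in the colimit.

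Combining these three cases yields the displayed formula, and in each case the identifying map is induced by the inclusion $\ttau^{\wp}_{\leq k}\ms S^*\into\ms S^*$ of Remark \ref{R: mono}, because we have used the same presheaf-level inclusion throughout. The only mild obstacle in the plan is making the commutation of $T^{\wp}$ with filtered colimits fully rigorous, but the germ-chasing argument above handles it directly without invoking any heavy machinery.
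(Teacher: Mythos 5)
Your proposal is correct and follows essentially the same route as the paper: pass to stalks via the fact that sheafification preserves stalks, commute the filtered colimit past cohomology, invoke the presheaf lemma, and then handle the $i=k+1$ case by a germ-chasing argument showing that $T^{\wp}$ commutes with the filtered colimit. The paper's proof of surjectivity works with a representative section and explicitly produces the germ $t_x$ whose boundary annihilates $ns_x$, while yours phrases the same step in terms of a cohomology class $[s]$ and the vanishing $n[s]=0$; these are the same argument, and both injectivity arguments likewise agree in substance.
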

\begin{proof}
It is an exercise to show that if $M_i$ is a direct system of $R$-modules then $T^\wp \dlim M_i\cong \dlim T^\wp M_i$. Now apply Lemma \ref{L: presheaf ttau} and basic properties of stalk cohomology.
\end{proof}

\begin{remark}\label{R: empty3}
As in Remark \ref{R: empty2}, if $\wp=\emptyset$, then $\tau_{\leq k}\ms S\into \ttau^{\emptyset}_{\leq k} \ms S$ is a quasi-isomorphism. 
\end{remark}

\subsection{Localized truncation}\label{S: local trunc}

The functor $\ttau^{\wp}_{\leq k}$ performs the same truncation over each point of the base space $X$. We next consider a modification that can truncate with respect to different $\wp$ and different $k$ depending on the point $x\in X$. Versions of such ``localized truncations'' that did not account for torsion information were constructed in \cite{GBF23}.

Let $\mc A$ be a sheaf complex on $X$, and let $\mf F$ be a locally-finite collection of disjoint closed subsets of $X$. Let $|\mf F|=\cup_{F\in \mf F}F$. Let $q$ be a function $q: \mf F\to \Z\times \P(\primeset{P}(R))$. We write $q(F)=(q_1(F),q_2(F))$.
In our construction below of the ts-Deligne sheaves, $\mf F$ will be the set of strata of $X$ of a given dimension and $q$ will be the restriction of a ts-perversity to  $\mf F$.

We will define a sheaf $\mf t_{\leq q}^{\mf F}\mc A$ as the sheafification of a presheaf $\mf T^{\mf F}_{\leq q }\mc A$. To define these, if $U\subset X$ is open and $U\cap |\mf F|\neq \emptyset$, let $\inf q_1(U)=\inf\{q_1(F)\mid F\in \mf F, F\cap U\neq \emptyset\}$, which may take the value $-\infty$, and let $\inf q_2(U)=\bigcap_{F\in \mf F, F\cap U\neq \emptyset} q_2(F)$. Now let 
\begin{equation*}
\mf T^{\mf F}_{\leq q}\mc A(U)=
\begin{cases}
\Gamma(U;\mc A),& U\cap |\mf F|=\emptyset,\\
\Gamma\left(U;\ttau_{\leq \inf q_1(U)}^{\inf q_2(U)}\mc A\right),& U\cap |\mf F| \neq\emptyset.
\end{cases} 
\end{equation*}
If $U\cap |\mf F| \neq\emptyset$ and $\inf q_1(U)=-\infty$, then we let $\mf T^{\mf F}_{\leq q}\mc A(U)=0$.
So, roughly speaking, $\mf T^{\mf F}_{\leq q}\mc A(U)$ is the truncation determined by the smallest truncation degree and smallest set of primes coming from $q(F)$ as $F$ ranges over sets of $\mf F$ that intersect $U$.

For the restriction maps of $\mf T^{\mf F}_{\leq q }\mc A$, if $W\subset U$ then we have $\inf P_1(U)\leq \inf P_1(W)$ and  $\inf P_2(U)\subset \inf P_2(W)$.
So using Remark \ref{R: mono}, if $W\subset U$ and $W\cap |\mf F| \neq\emptyset$, we have the composition of restriction and inclusion maps
\begin{equation*}
\Gamma\left(U;\ttau_{\leq \inf q_1(U)}^{\inf q_2(U)}\mc A\right)\to\Gamma\left(W;\ttau_{\leq \inf q_1(U)}^{\inf q_2(U)}\mc A\right)\into\Gamma\left(W;\ttau_{\leq \inf q_1(W)}^{\inf q_2(W)}\mc A\right).
\end{equation*}
If $U\cap |\mf F|\neq \emptyset$ but $W\cap |\mf F| =\emptyset$ we have a similar composition whose target module is $\Gamma\left(W;\mc A\right)$. Together with the standard restriction of $\mc A$ when $U\cap|\mf F|=\emptyset$, these determine the restriction homomorphism $\mf T^{\mf F}_{\leq q}\mc A(U)\to \mf T^{\mf F}_{\leq q}\mc A(W)$. Using that $\ttau^{\wp}_{\leq k}$ is a functor of sheaf complexes and the naturality of the monomorphisms in Remark \ref{R: mono}, we see that $\mf T^{\mf F}_{\leq q}$ is a functor from sheaf complexes to presheaf complexes over $X$.

\begin{definition}
For a sheaf complex $\mc A$ over $X$, let the \emph{locally torsion-tipped truncation} $\mf t_{\leq q}^{\mf F}\mc A$ be the sheafification of $\mf T^{\mf F}_{\leq q}\mc A$.
For a map $f:\mc A\to \mc B$ of sheaf complexes over $X$, we obtain $\mf t_{\leq q}^{\mf F}f$  by sheafifying the map $\mf T^{\mf F}_{\leq q}f$. 
\end{definition}

The following lemma contains the key facts we will need about $\mf t_{\leq q}^{\mf F}$. 

\begin{lemma}\label{L: properties}\hfill
\begin{enumerate}

\item $\mf t_{\leq q}^{\mf F}$ is an endofunctor of sheaf complexes on $X$.

\item There is a natural inclusion of sheaf complexes $\mf t^{\mf F}_{\leq q}\mc A\into \mc A$.

\item $\left.\left(\mf t_{\leq q}^{\mf F}\mc A\right)\right|_{X-|\mf F|}=\mc A|_{X-|\mf F|}$.

\item For each $F\in \mf F$, we have $\left.\left(\mf t_{\leq q}^{\mf F}\mc A\right)\right|_{F}=\left.\left(\ttau_{\leq q_1(F)}^{q_2(F)}\mc A\right)\right|_{F}$.
\end{enumerate}
\end{lemma}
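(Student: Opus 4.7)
The plan is to handle parts (1) and (2) directly at the presheaf level, then reduce parts (3) and (4) to stalkwise computations. For (1), I would simply appeal to the observation made just before the definition that $\mf T^{\mf F}_{\leq q}$ is a functor from sheaf complexes to presheaf complexes, and compose it with the sheafification functor. For (2), I would build a natural termwise injection of presheaves from $\mf T^{\mf F}_{\leq q}\mc A^*$ into the presheaf of sections of $\mc A^*$: on an open $U$ with $U\cap|\mf F|=\emptyset$ this is the identity on $\Gamma(U;\mc A^*)$, and otherwise it is the inclusion $\Gamma(U;\ttau^{\inf q_2(U)}_{\leq \inf q_1(U)}\mc A^*)\hookrightarrow\Gamma(U;\mc A^*)$ furnished by Remark \ref{R: mono}. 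These maps are compatible with restriction and natural in $\mc A^*$, so sheafification (which preserves injections) yields the desired natural inclusion $\mf t^{\mf F}_{\leq q}\mc A^*\hookrightarrow\mc A^*$.

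Parts (3) and (4) are local, so I would verify them by showing that the canonical inclusion of part (2) becomes an isomorphism onto the expected subsheaf of $\mc A^*$ on every stalk of the relevant locus. Since $\mf F$ is a locally finite collection of disjoint closed sets, $|\mf F|$ is closed and $X-|\mf F|$ is open. For (3), given $x\in X-|\mf F|$, I would choose an open neighborhood $W\subset X-|\mf F|$ of $x$; every open $U$ with $x\in U\subset W$ satisfies $U\cap|\mf F|=\emptyset$, so $\mf T^{\mf F}_{\leq q}\mc A^*(U)=\Gamma(U;\mc A^*)$. Passing to the colimit over such $U$ yields $(\mf t^{\mf F}_{\leq q}\mc A^*)_x\cong \mc A^*_x$, and the isomorphism is the stalk map of the inclusion of part (2), establishing (3).

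Part (4) is where the work lies. Given $x\in F$, local finiteness of $\mf F$ supplies a neighborhood $N$ of $x$ meeting only finitely many elements of $\mf F$; since the remaining elements are disjoint closed sets not containing $x$, I can shrink $N$ to an open $U_0\ni x$ with $U_0\cap|\mf F|\subset F$. For any open $U$ with $x\in U\subset U_0$, the intersection $U\cap|\mf F|=U\cap F$ contains $x$ and no other stratum of $\mf F$ meets $U$, so $\inf q_1(U)=q_1(F)$ and $\inf q_2(U)=q_2(F)$. Consequently $\mf T^{\mf F}_{\leq q}\mc A^*(U)=\Gamma(U;\ttau^{q_2(F)}_{\leq q_1(F)}\mc A^*)$ on this cofinal family of neighborhoods, and passing to colimits gives $(\mf t^{\mf F}_{\leq q}\mc A^*)_x\cong (\ttau^{q_2(F)}_{\leq q_1(F)}\mc A^*)_x$ compatibly with the inclusions into $\mc A^*_x$. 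Since both $\mf t^{\mf F}_{\leq q}\mc A^*|_F$ and $\ttau^{q_2(F)}_{\leq q_1(F)}\mc A^*|_F$ are then subsheaves of $\mc A^*|_F$ with identical stalks, they coincide. The main obstacle is to use closedness, disjointness, and local finiteness of $\mf F$ in just the right combination to produce a cofinal neighborhood basis at $x$ on which the infima over $\mf F$ stabilize at exactly $(q_1(F),q_2(F))$, rather than at some strictly smaller value contributed by a stratum that clusters near $x$.
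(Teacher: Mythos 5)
Your proposal is correct and takes essentially the same approach as the paper: the paper's proof consists precisely of the observation that local finiteness and disjointness of the closed sets in $\mf F$ give, for $x\in F$, a neighborhood meeting no member of $\mf F$ other than $F$ (and for $x\notin|\mf F|$, a neighborhood missing $|\mf F|$ entirely), from which everything else follows by unwinding the definitions. You have simply spelled out the stalkwise verification that the paper leaves implicit.
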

\begin{proof}
These all follow immediately from the definitions and the properties of $\ttau^{\wp}_{\leq k}$. For the last two items we note that $\mf F$ being a locally finite collection of disjoint closed sets implies that if $x\in F\in \mf F$ then there is a neighborhood $U$ of $x$ that intersects no element of $\mf F$ other than $F$, and if $x\notin |\mf F|$ then there is a neighborhood $U$ of $x$ such that $U\cap |\mf F|=\emptyset$.
\end{proof}

\begin{remark}
$\mf T^{\mf F}_{\leq q}\mc A$ will not necessarily be a sheaf, so the sheafification in the definition is necessary. This is true even when all $q_2(F)=\emptyset$ so that all truncation functors are the classical ones; see \cite[Remark 3.5]{GBF23} for an example. 
\end{remark}

\begin{example}
It follows from the last statement of Lemma \ref{L: properties} that if $\mf F=\{X\}$, then  $\mf t_{\leq q}^{\mf F}\mc A=\ttau_{\leq q_1(X)}^{q_2(X)}\mc A$, which is a $\wp$-torsion-tipped truncation in the sense of our original definition. 
\end{example}

\begin{example}\label{E: whole thing}
Suppose that $q_1(F)=m$ for all $F\in\mf F$, that $q_2(F)=\emptyset$ for all $F\in \mf F$, and that $H^i(\mc A_x)=0$ for all $x\in X-|\mf F|$ and $i>m$. Then the inclusion $\tau_{\leq m}\mc A\into \mf t_{\leq q}^{\mf F}\mc A$ is a quasi-isomorphism, as we can see by looking at the behavior on open sets and consequently on stalk cohomology. 
\end{example}

\section{Torsion-sensitive Deligne sheaves}\label{S: TSD}

In this section, we define and study our torsion-sensitive Deligne sheaves. Section \ref{S: def2} contains the basic construction. In Section \ref{S: axiomatics}, we investigate the axiomatic and constructibility properties. 
Vanishing properties are proven in Section \ref{S: vanishing}, and
the duality theorem then follows in Section \ref{S: duality}. Finally, we treat self-duality in Section \ref{S: self dual}.

\textbf{Notation.} Throughout, we fix a ground PID $R$, and we let $X$ be a \emph{topological stratified $n$-pseudomanifold} \cite{GM2,Bo, GBF35}. Recall \cite[Definition 2.4.13]{GBF35} that a $0$-dimensional stratified pseudomanifold is a discrete set of points and that an $n$-dimensional stratified pseudomanifold has a filtration by closed subsets $X=X^n\supset X^{n-1}\supset \cdots\supset X^0\supset X^{-1}=\emptyset$ such that $X-X^{n-1}$ is dense and such that if $x\in X^{n-k}-X^{n-k-1}$ then there is a \emph{distinguished neighborhood} $U$ of $x$ and a compact $k-1$ dimensional stratified pseudomanifold $L$ (possibly empty) such that $U\cong \R^{n-k}\times cL$, where $cL$ is the open cone on $L$ and the homeomorphism takes  $U\cap X^{j}$ onto $\R^{n-k}\times cL^{j-n+k-1}$ for all $j$. Following \cite[Remark V.2.1]{Bo}, if we omit the density condition we call the resulting stratifications \emph{unrestricted}; unrestricted stratifications will be useful in Section \ref{S: perverse}. These spaces are all Hausdorff, finite dimensional, locally compact, and locally completely paracompact\footnote{A space is locally completely paracompact if every point has a neighborhood all of whose open subsets are paracompact \cite[Section V.1.17]{Bo}. 
Every stratified pseudomanifold $X$ is locally completely paracompact: By \cite[page 82]{GM2}, a compact (unrestricted) stratified pseudomanifold $L$ can be embedded in some Euclidean space. By adding Euclidean dimensions we can thus form $\R^{n-k}\times cL$ as a subspace of Euclidean space. Hence stratified pseudomanifolds are locally metrizable, which is sufficient. That compact pseudomanifolds can be embedded in Euclidean space isn't proven in \cite{GM2} but can be verified inductively over dimension. In fact, the above argument shows how to embed distinguished neighborhoods in Euclidean space once we know the links can be so embedded (which is obvious for $0$-dimensional links), and we can then use these embeddings and a partition of unity over a finite covering by distinguished neighborhoods to embed $X$ into Euclidean space as one does for compact manifolds \cite[Theorem 36.2]{MK2}.}. 

Let $U_k=X-X^{n-k}$, let $X_{n-k}=X^{n-k}-X^{n-k-1}=U_{k+1}-U_k$, and let $i_k:U_k\into U_{k+1}$ be the inclusion. The connected components of $X_{n-k}$ for $k>0$ are called \emph{singular strata}; the components of $X_n=U_1$ are \emph{regular strata}. We typically use $Z$ to denote a stratum.

 Throughout, $\H^*$ denotes hypercohomology and $\mc H^*$ a cohomology sheaf.

\subsection{The definition}\label{S: def2}

Let us recall the original Deligne sheaf construction of \cite{GM2}. 
The original perversity functions of Goresky and MacPherson were functions $\bar p:\Z_{\geq 1}\to \Z$, with additional restrictions including being nonnegative and nondecreasing. Given such a perversity and a local system $\mc E$ on $U_1$ thought of as a complex concentrated in degree $0$, the classical Deligne sheaf is constructed as 
$$\mc P_{X,\bar p,\mc E}= \tau_{\leq \bar p(n)}Ri_{n*} \cdots \tau_{\leq \bar p(1)}Ri_{1*}\mc E,$$
using the standard truncation $\tau$. We wish to modify this construction to account for torsion. 

One limitation of the original construction is that if we allow the possibility that $\bar p(k)>\bar p(k')$ for some $k<k'$ then a truncation at a later stage of the iterated construction will ``lop off'' some of the higher degree local cohomology established at an earlier stage. More dramatically, if we ever allow $\bar p(k)<0$ for any $k$ then $\mc P=0$. So to allow for more general perversities of the form $\bar p: \{\text{singular strata of $X$}\}\to \Z$ and with no further restrictions, which have become useful in recent years, we would also like our truncations to be local in the sense that the truncation at each stage of the iterated construction affects the sheaf only at the points of the strata just added. 
Such Deligne sheaves were constructed in \cite{GBF23}. 

To account both for torsion and for general perversities, we will use our locally torsion-tipped truncation functors. First we need to define our perversities and  coefficient systems.

\paragraph{Perversities.} We first define perversities that also track sets of primes. Let $\primeset{P}(R)$ be the set of primes of $R$ (up to unit), and let $\P(\primeset{P}(R))$ be its power set (so elements of $\P(\primeset{P}(R))$ are sets of primes of $R$).

\begin{definition}\label{D: ts-perv}
Let a \emph{torsion-sensitive perversity} (or simply \emph{ts-perversity}) be a function $\vec p: \{\text{singular strata of $X$}\}\to \Z\times \P(\primeset{P}(R))$. We denote the components of $\vec p(Z)$ by $(\vec p_1(Z),\vec p_2(Z))$. 
\end{definition}

\paragraph{Coefficients.}
Next we need to consider the coefficient systems we will use.
For a stratified pseudomanifold $X$, the classical Deligne sheaf construction assumes given a local system (locally constant sheaf) $\mc E$ of finitely generated $R$-modules on $U_1$ or, equivalently in the derived category, a sheaf complex $\mc E$ with $\mc H^0(\mc E)$ a local system of finitely generated $R$-modules and with  $\mc H^i(\mc E)=0$ for $i\neq 0$. To emulate certain versions of singular intersection homology Habegger and Saper work with much more general coefficients \cite{HS91} (see also \cite{GBF26}). Motivated by the perverse sheaf context we will explore below in Section \ref{S: perverse}, the following seems to be an appropriate definition for coefficients in the torsion-sensitive setting:

\begin{definition}\label{D: ts-coeff}
Let $\wp \subset \primeset{P}(R)$ be a set of primes of the PID $R$. We will call a complex of sheaves  $\mc E$ on a manifold $M$ a \emph{$\wp$-coefficient system} if 
\begin{enumerate}
\item  $\mc H^1(\mc E)$ is a locally constant sheaf of finitely generated $\wp$-torsion modules,
\item $\mc H^0(\mc E)$ is a locally constant sheaf of finitely generated $\wp$-torsion-free modules, and
\item $\mc H^i(\mc E)=0$ for $i\neq 0,1$.
\end{enumerate}
We will typically write ``a $\wp$-coefficient system'' to mean ``a $\wp$-coefficient system for some $\wp\subset \primeset{P}(R)$.'' We use $\wp(\mc E)$ for the primes with respect to which  $\mc E$ is a $\wp$-coefficient system. 

More generally, if $M=\amalg M_j$ is a disjoint union and $\mc E$ restricts on each $M_j$ to a $\wp$-coefficient system for some $\wp$, which may vary by component, we call $\mc E$ a \emph{ts-coefficient system} and write $\wp(M_j,\mc E)$ for $\wp(\mc E|_{M_j})$.  
\end{definition}

\begin{remark}\label{R: empty p}
If $\wp(\mc E)=\emptyset$ then, up to isomorphism in the derived category, a $\wp$-coefficient system is simply a local system of finitely generated $R$-modules in degree $0$. 
\end{remark}

\paragraph{Deligne sheaves.}
We define our ts-Deligne sheaves.
Let $\vec p$ be a ts-perversity on the stratified pseudomanifold $X$. Our construction will use the truncation functor $\mf t_{\leq \vec p}^{X_k}$, where we abuse our notation by allowing $X_k$ to also stand for the set of connected components of $X_k=X^k-X^{k-1}$ and letting $\vec p$ also refer to its restriction to these components.  

\begin{definition}Given a ts-perversity $\vec p: \{\text{singular strata of $X$}\}\to \Z\times \P(\primeset{P}(R))$ and ts-coefficient system $\mc E$ on $X-X^{n-1}$, let the \emph{torsion-sensitive Deligne sheaf} (or \emph{ts-Deligne sheaf}) be defined by
$$\ms P_{X,\vec p,\mc E}=  \mf t_{\leq \vec p}^{X_0}Ri_{n*}\ldots\mf t_{\leq \vec p}^{X_{n-1}}Ri_{1*}\mc E.$$
\end{definition}

This construction generalizes that of Goresky-MacPherson in \cite{GM2} and the construction of the Deligne sheaf for general perversities in \cite{GBF23}. 
If $X$, $\vec p$, or $\mc E$ is fixed, we sometimes drop them from the notation.

\begin{example}\label{E: GM}
Suppose that $\wp(\mc E)=\emptyset$, that $\vec p_2(Z)=\emptyset$ for all singular strata $Z$, and that $\vec p_1$ is a nonnegative and nondecreasing function only of codimension so that we write $\vec p_1(k)$ rather than $\vec p_1(Z)$ when $\codim(Z)=k$. Then $\mc E$ is just a standard local system concentrated in degree $0$ by Remark \ref{R: empty p}, and $\mf t_{\leq \vec p}^{X_{n-k}}$ reduces to $\tau_{\leq \vec p_1(k)}$ by Example \ref{E: whole thing}, using that the nondecreasing assumption on $\vec p_1$ implies that the stalk cohomology over points of $X-X^{n-k}$ will already be trivial in degrees $>\vec p_1(k)$ via the inductive construction. 
Therefore in this case $\ms P$ is quasi-isomorphic to the standard Deligne sheaf $\mc P$ as defined in \cite{GM2}. 

More generally, continuing to assume that $\vec p_1$ is a nonnegative and nondecreasing function only of codimension but letting now $\vec p_2(Z)$ be arbitrary, then $\ms P$ is the standard Deligne sheaf if $\wp(\mc E)=\emptyset$ and if $T^{\vec p_2(Z)}H^{\vec p_1(k)+1}((Ri_{k*}(\ms P|_{U_k}))_x)=0$ for each $k$ and each $x\in Z\subset X_{n-k}$. If $X$ is locally $\bar p$-torsion-free in the sense of Goresky and Siegel \cite{GS83}, this will be the case for any $\vec p$ such that $\vec p_1=\bar p$. To see this, we use that 
$H^{*}((Ri_{k*}(\ms P|_{U_k}))_x)\cong \dlim_{x\in U}\H^*(U;Ri_{k*}(\ms P|_{U_k}))$, while the latter system is constant over distinguished neighborhoods of $x$ by \cite[Lemma V.3.9.b and Proposition V.3.10]{Bo} and Theorem \ref{T: cc} concerning constructibility, which we will demonstrate below. Then also $\H^*(U;Ri_{k*}(\ms P|_{U_k}))\cong \H^*(L;\ms P|_L)$, where $L=L^{k-1}$ is the link of $x$, by \cite[Lemma V.3.9.a]{Bo}. So the condition in \cite[Definition 4.1]{GS83} that 
$I^{\bar p}H_{k-\vec p_1(k)-2}(L;\mc E)=\H^{\bar p(k)+1}(L;\ms P|_L)$ be torsion free is equivalent to the condition that 
 $H^{\vec p_1(k)+1}((Ri_{k*}(\ms P|_{U_k}))_x)$ be torsion free. 
\end{example}

\begin{example}\label{E: CS}
Similarly, suppose that $\wp(\mc E)=\emptyset$, that $\vec p_1$ is a nonnegative and nondecreasing function only of codimension, that $H^{\vec p_1(k)+1}((Ri_{k*}(\ms P|_{U_k}))_x)$ is always a torsion $R$-module, and that $\vec p_2(Z)=\primeset{P}(R)$ for all singular strata $Z$. Then the additional torsion that we get from the torsion-tipped truncation one degree above the standard truncation degree is all the cohomology in that degree, so this is the same as performing the standard truncation one degree higher. Thus the complex $\ms P_{\vec p}$ is the same as the Deligne sheaf $\mc P_{\vec p_1+1}$, where $\vec p_1+1$ is the perversity whose value on $k$ is $\vec p_1(k)+1$. Such Deligne sheaves arise in the Cappell-Shaneson superduality theorem \cite{CS91}.
\end{example}

It would be interesting to have a geometric formulation of the hypercohomology groups $\H^*(X;\ms P_{\vec p})$ in terms of simplicial or singular chains with certain restrictions, as is the case for intersection homology theory and the classical Deligne sheaf $\mc P_{\bar p,\mc E}$.

\subsection{Axiomatics and constructibility} \label{S: axiomatics}

We define a set of axioms analogous to the Goresky-MacPherson axioms Ax1 and show that they characterize $\ms P$. Our treatment parallels the work  of \cite{GM2} and the exposition of \cite[Section V.2]{Bo}.

\begin{definition}
Let $X$ be an $n$-dimensional stratified pseudomanifold, and let $\mc E$ be a ts-coefficient system on $U_1$ over a principal ideal domain $R$. For a sheaf complex $\ms S$ on $X$, let $\ms S_k=\ms S|_{U_k}$. 
 We say $\ms S$ satisfies the \emph{Axioms TAx1$(X,\vec p, \mc E)$ (or simply TAx1)} if 

\begin{enumerate}
\item\label{A: bounded} $\ms S$ is quasi-isomorphic to a complex that is bounded and that is $0$ in negative degrees;
\item \label{A: coeffs} $\ms S|_{U_1}\cong\mc E$;
\item \label{A: truncate} if $x\in Z\subset X_{n-k}$, where $Z$ is a singular stratum, then
 $H^i(\ms S_x)=0$ for $i>\vec p_1(Z)+1$ and $H^{\vec p_1(Z)+1}(\ms S_x)$ is $\vec p_2(Z)$-torsion;
\item \label{A: attach} if $x\in Z\subset X_{n-k}$, where $Z$ is a singular stratum, then the attachment map $\alpha_k:\ms S_{k+1}\to Ri_{k*}\ms S_k$ induces stalkwise cohomology isomorphisms at $x$  in degrees $\leq \vec p_1(Z)$ and it induces stalkwise cohomology isomorphisms $H^{\vec p_1(Z)+1}(\ms S_{k+1,x})\to T^{\vec p_2(Z)}H^{\vec p_1(Z)+1}( (Ri_{k*}\ms S_k)_x)$. 
\end{enumerate} 
\end{definition}

\begin{theorem}\label{T: axioms}
The sheaf complex $\ms P_{X,\vec p,\mc E}$ satisfies the axioms  TAx1$(X,\vec p, \mc E)$, and any sheaf complex satisfying  TAx1$(X,\vec p, \mc E)$ is quasi-isomorphic to $\ms P_{X,\vec p,\mc E}$.
\end{theorem}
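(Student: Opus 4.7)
The plan is to follow the classical Goresky-MacPherson-Borel template for the axiomatic characterization of the Deligne sheaf \cite[Section V.2]{Bo}, substituting the torsion-tipped truncation $\mf t_{\leq \vec p}^{X_{n-k}}$ for the ordinary truncation $\tau_{\leq \bar p(k)}$ at every step. Both halves of the argument --- existence of $\ms P^*_{X,\vec p,\mc E^*}$ satisfying TAx1, and uniqueness up to quasi-isomorphism --- proceed by induction on $k$ along the open filtration $U_1 \subset U_2 \subset \cdots \subset U_{n+1} = X$, exploiting the recursion $\ms P^*|_{U_{k+1}} = \mf t_{\leq \vec p}^{X_{n-k}} Ri_{k*}(\ms P^*|_{U_k})$ that is built into the definition.

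Existence essentially repackages Lemmas \ref{L: properties} and \ref{L: sheaf ttau}. Axiom (\ref{A: coeffs}) is immediate since neither $Ri_{k*}$ nor $\mf t_{\leq \vec p}^{X_{n-k}}$ alters the restriction to $U_1$. Axiom (\ref{A: bounded}) follows inductively from the bound on $\mc E^*$, an upper bound on the cohomological degrees contributed by each $Ri_{k*}$ (controlled by link dimensions in a stratified pseudomanifold), and the fact that $\mf t_{\leq \vec p}^{X_{n-k}}$ kills everything above the truncation threshold. Axioms (\ref{A: truncate}) and (\ref{A: attach}) are the content: for $x \in Z \subset X_{n-k}$, Lemma \ref{L: properties}(4) identifies $\ms P^*|_Z$ with $\ttau_{\leq \vec p_1(Z)}^{\vec p_2(Z)} Ri_{k*}(\ms P^*|_{U_k})|_Z$, and Lemma \ref{L: sheaf ttau} then supplies both the required stalk vanishing above $\vec p_1(Z)+1$ and the $\vec p_2(Z)$-torsion description in degree $\vec p_1(Z)+1$; the inclusion from Lemma \ref{L: properties}(2) serves as the attachment map $\alpha_k$ with exactly the cohomological behavior demanded by (\ref{A: attach}).

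For uniqueness, suppose $\ms S^*$ satisfies TAx1; the base case $k=1$ is Axiom (\ref{A: coeffs}). For the inductive step, assume $\ms S^*|_{U_k} \sim_{qi} \ms P^*|_{U_k}$. Applying $Ri_{k*}$ to this equivalence and composing with the adjunction-unit attachment map for $\ms S^*$ produces a derived morphism $\phi: \ms S^*|_{U_{k+1}} \to Ri_{k*}(\ms P^*|_{U_k})$. The central step is to show that $\phi$ factors through $\mf t_{\leq \vec p}^{X_{n-k}} Ri_{k*}(\ms P^*|_{U_k}) = \ms P^*|_{U_{k+1}}$ via a quasi-isomorphism. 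Granted that, checking that the factor is a stalkwise quasi-isomorphism is routine bookkeeping: over $U_k$ it reduces to the inductive identification, while over each $x \in Z \subset X_{n-k}$ both sides have the same stalk cohomology by Axiom (\ref{A: truncate}) for the source and Lemma \ref{L: sheaf ttau} for the target, with Axiom (\ref{A: attach}) identifying the map between them.

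The main obstacle is the factorization, because $\mf t_{\leq \vec p}^{X_{n-k}}$ is defined via weak $\wp$-boundaries rather than by any manifest universal mapping property. I would attack it through the distinguished triangle $\mf t_{\leq \vec p}^{X_{n-k}} \mc B^* \to \mc B^* \to Q^* \to$, with $\mc B^* = Ri_{k*}(\ms P^*|_{U_k})$: Lemma \ref{L: sheaf ttau} shows that the stalks of $Q^*$ on $Z$ have cohomology zero in degrees $\leq \vec p_1(Z)$ and $\vec p_2(Z)$-torsion-free in degree $\vec p_1(Z)+1$, while by Axiom (\ref{A: truncate}) the stalks of $\ms S^*|_{U_{k+1}}$ are concentrated in degrees $\leq \vec p_1(Z)+1$ with only $\vec p_2(Z)$-torsion at the top. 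This forces the composite $\ms S^*|_{U_{k+1}} \to Q^*$ to vanish on stalk cohomology in every degree; the remaining technical work is to promote this cohomological vanishing to actual vanishing in the derived category (hence to the desired lift through the long exact sequence $\Hom_D(\ms S^*, \mf t \mc B^*) \to \Hom_D(\ms S^*, \mc B^*) \to \Hom_D(\ms S^*, Q^*)$), which I would accomplish by a hypercohomology computation on small distinguished neighborhoods of points of $X_{n-k}$ using injective or flasque resolutions. As a sanity check and alternative route, one can recognize $\mf t_{\leq \vec p}^{X_{n-k}}$ as the truncation functor of a genuine $t$-structure once the machinery of Section \ref{S: perverse} is available, at which point the factorization follows automatically.
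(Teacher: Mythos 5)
Your overall induction scheme is right, and your existence argument matches the paper's. But the uniqueness step proceeds by a genuinely different route than the paper's, and the route you propose is left with a real gap at precisely the step you flag as the hard one.

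Your plan is to build a derived morphism $\ms S^*|_{U_{k+1}} \to Ri_{k*}(\ms P^*|_{U_k})$ and then lift it through the truncation by showing a $\Hom$-group into the quotient $Q^*$ vanishes. You gesture at ``promoting cohomological vanishing to actual vanishing in the derived category'' via hypercohomology on distinguished neighborhoods, but this is where the work is. Stalkwise vanishing of a composite does not in general imply vanishing in the derived category. What actually closes the gap is the observation that $Q^*$ is supported on the closed subset $X_{n-k}$, so one can pass by adjunction to $\Hom_{D(X_{n-k})}(j_k^*\ms S^*, j_k^*Q^*)$; here the source has stalk cohomology concentrated in degrees $\leq \vec p_1(Z)+1$ with the top degree $\vec p_2(Z)$-torsion, the target in degrees $\geq \vec p_1(Z)+1$ with the bottom degree $\vec p_2(Z)$-torsion-free, and then the abutting-degree analysis (cf.\ \cite[Proposition 8.1.8]{BaIH}, which the paper uses elsewhere for a similar vanishing in the proof of Proposition \ref{P: t}) reduces the $\Hom$-group to sheaf maps from a $\vec p_2(Z)$-torsion sheaf to a $\vec p_2(Z)$-torsion-free sheaf, which are zero. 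That argument is correct and you are right that it is essentially the $t$-structure truncation story, but as written your proposal hasn't supplied it.

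The paper avoids this lifting problem altogether. Its Lemma \ref{L: axioms} applies the functor $\mf t^{X_{n-k}}_{\leq\vec p}$ to \emph{both} sides of the attachment map $\alpha_k : \ms S^*_{k+1} \to Ri_{k*}\ms S^*_k$, yielding a commutative square with vertical inclusions $\beta$ and $\gamma$ and a bottom map $\mf t^{X_{n-k}}_{\leq\vec p}\alpha_k$. Since $\ms S^*$ already satisfies axiom (\ref{A: truncate}), the inclusion $\beta : \mf t^{X_{n-k}}_{\leq\vec p}\ms S^*_{k+1} \hookrightarrow \ms S^*_{k+1}$ is a quasi-isomorphism. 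Then $\mf t^{X_{n-k}}_{\leq\vec p}\alpha_k$ is checked to be a quasi-isomorphism degree-by-degree using axiom (\ref{A: attach}) and Lemma \ref{L: sheaf ttau}. This gives $\ms S^*_{k+1} \sim_{qi} \mf t^{X_{n-k}}_{\leq\vec p}Ri_{k*}\ms S^*_k$ directly, and the induction is immediate. In effect, rather than constructing a map and then factoring it through a truncation, the paper wraps the existing attachment map in the truncation functor and notices that this is free on the source. This is shorter, avoids any $\Hom$-into-cone computation, and stays entirely at the level of honest sheaf complexes rather than the derived category.

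So: your proposal is in the right spirit, and your factorization argument can be completed (indeed it is morally the same as the proof that $\mf t$ implements a $t$-structure), but you should either supply the adjunction/torsion-vs-torsion-free step or, better, adopt the paper's shortcut of truncating both sides of $\alpha_k$.
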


The theorem relies on the following lemma.

\begin{lemma}\label{L: axioms}
Suppose $\ms S$ satisfies the axioms  TAx1$(X,\vec p, \mc E)$. Then, for $k>0$, we have $\ms S_{k+1}\cong \mf t_{\leq\vec p}^{X_{n-k}}Ri_{k*}\ms S_k$.
\end{lemma}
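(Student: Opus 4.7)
The strategy is to introduce the intermediate sheaf complex $\mf t^{X_{n-k}}_{\leq\vec p}\ms S^*_{k+1}$ and show that both of the natural maps
\[
\ms S^*_{k+1}\xleftarrow{\;j\;}\mf t^{X_{n-k}}_{\leq\vec p}\ms S^*_{k+1}\xrightarrow{\;\mf t^{X_{n-k}}_{\leq\vec p}\alpha_k\;}\mf t^{X_{n-k}}_{\leq\vec p}Ri_{k*}\ms S^*_k
\]
are quasi-isomorphisms, where $j$ is the natural monomorphism of Lemma \ref{L: properties}(2) and $\alpha_k:\ms S^*_{k+1}\to Ri_{k*}\ms S^*_k$ is the attachment map of Axiom \ref{A: attach}. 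To have a genuine map of complexes on which the functor $\mf t^{X_{n-k}}_{\leq\vec p}$ can be applied, the preliminary step is to replace $\ms S^*_k$ by an injective (or sufficiently soft) resolution so that $Ri_{k*}\ms S^*_k$ is represented by $i_{k*}$ of that resolution, and then model $\ms S^*_{k+1}$ by a resolution whose restriction to $U_k$ matches; this is the standard technical setup already used in the classical Deligne argument of \cite{GM2, Bo}.

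Both quasi-isomorphism claims are then verified by a stalk computation using Lemmas \ref{L: properties} and \ref{L: sheaf ttau}. On $U_k$ there is nothing to check: by Lemma \ref{L: properties}(3) the truncation functor $\mf t^{X_{n-k}}_{\leq\vec p}$ acts trivially there, and the attachment $\alpha_k$ restricts to the identity on $\ms S^*_k$ since $i_k$ is an open inclusion. On a singular stratum $Z\subset X_{n-k}$ with $x\in Z$, Lemma \ref{L: properties}(4) together with Lemma \ref{L: sheaf ttau} computes the stalk cohomology of $\mf t^{X_{n-k}}_{\leq\vec p}\mc A^*$ at $x$ as $H^i(\mc A^*_x)$ in degrees $i\leq\vec p_1(Z)$, as $T^{\vec p_2(Z)}H^{\vec p_1(Z)+1}(\mc A^*_x)$ in degree $\vec p_1(Z)+1$, and as $0$ above, for $\mc A^*$ either $\ms S^*_{k+1}$ or $Ri_{k*}\ms S^*_k$.

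For $j$, this computation combined with Axiom \ref{A: truncate} gives the conclusion: since $H^{\vec p_1(Z)+1}((\ms S^*_{k+1})_x)$ is already $\vec p_2(Z)$-torsion and $H^i((\ms S^*_{k+1})_x)=0$ for $i>\vec p_1(Z)+1$, the stalk of $\mf t^{X_{n-k}}_{\leq\vec p}\ms S^*_{k+1}$ has the same cohomology as that of $\ms S^*_{k+1}$ in every degree, and the inclusion $j$ induces these isomorphisms by the second statement of Lemma \ref{L: sheaf ttau}. For $\mf t^{X_{n-k}}_{\leq\vec p}\alpha_k$, the stalk map in degrees $i\leq\vec p_1(Z)$ is $H^i((\ms S^*_{k+1})_x)\to H^i((Ri_{k*}\ms S^*_k)_x)$ and in degree $\vec p_1(Z)+1$ is $H^{\vec p_1(Z)+1}((\ms S^*_{k+1})_x)\to T^{\vec p_2(Z)}H^{\vec p_1(Z)+1}((Ri_{k*}\ms S^*_k)_x)$; both are isomorphisms by Axiom \ref{A: attach}, and everything is $0$ in higher degrees.

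\textbf{Main obstacle.} The substantive issue is not the cohomology bookkeeping, which is essentially forced by the axioms, but the functorial point of representing $\alpha_k$ by an honest morphism of complexes so that the (non-derived) endofunctor $\mf t^{X_{n-k}}_{\leq\vec p}$ can be applied and produces the required map in the derived category. Once an appropriately injective model has been chosen the argument runs as above, and the only thing that then remains is a careful check that the stalk-level identifications commute with the maps $j$ and $\mf t^{X_{n-k}}_{\leq\vec p}\alpha_k$, which is immediate from the naturality statements in Lemmas \ref{L: presheaf ttau} and \ref{L: sheaf ttau}.
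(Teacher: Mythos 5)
Your proof is correct and follows essentially the same route as the paper: both pass through the intermediate complex $\mf t^{X_{n-k}}_{\leq\vec p}\ms S^*_{k+1}$ and show that the natural inclusion into $\ms S^*_{k+1}$ and the truncated attachment map $\mf t^{X_{n-k}}_{\leq\vec p}\alpha_k$ are quasi-isomorphisms via the stalk computations of Lemmas \ref{L: properties} and \ref{L: sheaf ttau} together with axioms TAx1\eqref{A: truncate} and TAx1\eqref{A: attach}. The additional remark about modeling $Ri_{k*}$ by an honest morphism of complexes is a reasonable technical footnote but does not change the argument.
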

\begin{proof}
By the functoriality of the truncation functors and their inclusion properties, we have a commutative diagram
\begin{diagram}
\ms S_{k+1}&\rTo^{\alpha_k}&Ri_{k*}\ms S_{k}\\
\uTo^\beta&&\uTo_\gamma\\
\mf t_{\leq \vec p}^{X_{n-k}}\ms S_{k+1}&\rTo^{\mf t_{\leq \vec p}^{X_{n-k}}\alpha_k}&\mf t_{\leq \vec p}^{X_{n-k}}Ri_{k*}\ms S_{k}.
\end{diagram}
The map $\beta$ is a quasi-isomorphism by axiom \eqref{A: truncate} and Lemmas \ref{L: properties} and \ref{L: sheaf ttau}.

At $x\in Z\subset X_{n-k}$, the map $\mf t_{\leq \vec p}^{X_{n-k}}\alpha_k$ is evidently an isomorphism in degrees $i>\vec p_1(Z)+1$. In degrees $i\leq \vec p_1(Z)$, $\alpha_k$ is a quasi-isomorphism by axiom \eqref{A: attach} and $\gamma$ is a quasi-isomorphism by Lemmas \ref{L: properties} and \ref{L: sheaf ttau}; thus $\mf t_{\leq \vec p}^{X_{n-k}}\alpha_k$ is a quasi-isomorphism in this range, as well. Finally, consider the diagram
\begin{diagram}
H^{\vec p_1(Z)+1}\left(\ms S_{k+1,x}\right)&\rTo&T^{\vec p_2(Z)}H^{\vec p_1(Z)+1}\left((Ri_{k*}\ms S_{k})_x\right)\\
\uTo^\beta&&\uTo\\
H^{\vec p_1(Z)+1}\left(\left(\mf t_{\leq \vec p}^{X_{n-k}}\ms S_{k+1}\right)_x\right)&\rTo^{\mf t_{\leq \vec p}^{X_{n-k}}\alpha_k}&H^{\vec p_1(Z)+1}\left(\left(\mf t_{\leq \vec p}^{X_{n-k}}Ri_{k*}\ms S_{k}\right)_x\right).
\end{diagram}
By Lemma \ref{L: sheaf ttau}, the righthand map is an isomorphism induced by the sheaf inclusion.
The top map is induced by $\alpha$ and is an isomorphism by axiom \eqref{A: attach}. We have already seen that $\beta$ induces an isomorphism. Thus the bottom map must be an isomorphism, and so $\mf t_{\leq \vec p}^{X_{n-k}}\alpha_k$ is a quasi-isomorphism of sheaves. 

Together, $\beta$ and $\mf t_{\leq \vec p}^{X_{n-k}}\alpha_k$ provide the desired quasi-isomorphism of the lemma.
\end{proof}

\begin{proof}[Proof of Theorem \ref{T: axioms}]
It is direct from the construction of $\ms P=\ms P_{X,\vec p,\mc E}$ that it satisfies the axioms. Conversely, suppose $\ms S$ satisfies the axioms and that $\ms S_k\cong\ms P_k$ for some $k$. This is true for $\ms S_1$ by axiom \eqref{A: coeffs}. By the preceding lemma, $\ms S_{k+1}\cong\mf t_{\leq \vec p}^{X_{n-k}}Ri_{k*}\ms S_k$. But by the induction hypothesis, this is quasi-isomorphic to $\mf t_{\leq \vec p}^{X_{n-k}}Ri_{k*}\ms P_k$, which is $\ms P_{k+1}$. The theorem follows by induction.
\end{proof}

Let $\mf X$ denote the stratification of the stratified pseudomanifold $X$. We recall the following definitions; see \cite[Section V.3.3]{Bo}. We say that the sheaf complex $\mc S$ is $\mf X$-cohomologically locally constant ($\mf X$-clc) if each sheaf $\mc H^i(\mc S)$ is locally constant on each stratum. We say $\mc S$ is $\mf X$-cohomologically constructible ($\mf X$-cc) if it is $\mf X$-clc and each stalk $\mc H^i(\mc S)_x$ is finitely generated. We will also use the notion of $\mc S$ being cohomologically constructible (cc); we refer to \cite{Bo} for the full definition but note that by \cite[Remark V.3.4.b]{Bo} if $\mc S$ is already known to be $\mf X$-cc then it is also cc if for all $x\in X$ and $i\in \Z$ the inverse system $\H^i_c(U;\mc S)$ over open neighborhoods of $x$ is essentially constant with finitely generated inverse limit.

\begin{theorem}\label{T: cc}
The sheaf complex $\ms P_{X,\vec p,\mc E}$ is $\mf X$-clc, $\mf X$-cc, and cc.
\end{theorem}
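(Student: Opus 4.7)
The natural strategy is induction on $k$, showing that at each stage the partial construction $\ms P^*_k := \ms P^*|_{U_k}$ is $\mf X$-clc and $\mf X$-cc on $U_k$, and then upgrading to full cohomological constructibility at the end using Borel's criterion \cite[Remark V.3.4.b]{Bo}.

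For the base case, $\ms P^*_1 \sim_{qi} \mc E^*$ is $\mf X$-clc and $\mf X$-cc on $U_1$ essentially by Definition \ref{D: ts-coeff}: its cohomology sheaves are locally constant in the two nonzero degrees, and in each degree the stalks are finitely generated (being either $\wp$-torsion finitely generated or $\wp$-torsion-free finitely generated, depending on the component of $U_1$). For the inductive step, assume $\ms P^*_k$ is $\mf X$-clc and $\mf X$-cc on $U_k$, and consider $\ms P^*_{k+1} = \mf t_{\leq \vec p}^{X_{n-k}} Ri_{k*}\ms P^*_k$. Away from $X_{n-k}$, the truncation functor has no effect by Lemma \ref{L: properties}(3), so $\ms P^*_{k+1}|_{U_k} = \ms P^*_k$ is already clc/cc by hypothesis. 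It therefore suffices to analyze stalks at points $x \in Z \subset X_{n-k}$.

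For such an $x$, choose a distinguished neighborhood $U \cong \R^{n-k} \times cL$ with link $L$. First I would compute $H^i((Ri_{k*}\ms P^*_k)_x) \cong \H^i(L;\ms P^*_k|_L)$ via \cite[Lemma V.3.9.a]{Bo}, after verifying that the inverse system $\H^i(U \cap U_k;\ms P^*_k)$ over distinguished neighborhoods of $x$ is essentially constant with the link value as limit; this essential constancy follows from the inductive $\mf X$-cc hypothesis applied over the open pseudomanifold $L$ together with \cite[Lemma V.3.9.b]{Bo}. Since $L$ has one fewer stratum depth than $Z$'s neighborhood, a simultaneous induction on pseudomanifold depth lets us conclude that $\H^*(L;\ms P^*_k|_L)$ is finitely generated. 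Applying Lemmas \ref{L: properties}(4) and \ref{L: sheaf ttau}, the stalk cohomology of $\ms P^*_{k+1}$ at $x$ is obtained from these finitely generated modules either unchanged (in degrees $\leq \vec p_1(Z)$), as a $\vec p_2(Z)$-torsion submodule of a finitely generated module (in degree $\vec p_1(Z)+1$), or zero (above); in all three cases one obtains finitely generated stalks. For local constancy of cohomology sheaves along $Z$, I would use that the local product structure $\R^{n-k} \times cL$ along $Z$ makes both the pushforward stalks and the torsion-tipped truncation depend only on the data of $L$ and $\vec p(Z)$, which is constant on connected components of $Z$; a small-neighborhood direct limit argument along a path in $Z$ then gives the isomorphism of nearby stalks.

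To upgrade $\mf X$-cc to cc, I would verify Borel's criterion: for each $x$ and $i$, the system $\{\H^i_c(U;\ms P^*)\}$ over open neighborhoods of $x$ is essentially constant with finitely generated inverse limit. Distinguished neighborhoods of $x \in X_{n-k}$ form a cofinal system, and on $\R^{n-k} \times cL$ the compactly supported hypercohomology of a pseudomanifold-constructible complex can be computed via the cone and Euclidean formulas \cite[Section V.3]{Bo}, reducing to $\H^*_c(L;\ms P^*|_L)$, which is finitely generated by the induction on depth. The main obstacle I anticipate is the bookkeeping around the torsion-tipped truncation in the $\mf X$-cc verification: specifically, one must confirm that $T^{\vec p_2(Z)}$ applied to a finitely generated module yields a finitely generated submodule (immediate since $R$ is a PID, so submodules of f.g.\ modules are f.g.), and that the sheafification step in the definition of $\mf t^{\mf F}_{\leq q}$ does not disturb local constancy along $Z$; this in turn follows from Lemma \ref{L: properties}(4) which identifies $\mf t^{X_{n-k}}_{\leq \vec p}$ on $Z$ with the globally defined $\ttau^{\vec p_2(Z)}_{\leq \vec p_1(Z)}$, whose stalkwise behavior is governed by Lemma \ref{L: sheaf ttau}.
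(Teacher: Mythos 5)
Your proposal is correct and is essentially an expanded version of the paper's proof, which simply invokes the machinery of \cite[Section V.3]{Bo} (the link formula for pushforward stalks, induction on depth for finite generation, and the passage from $\mf X$-cc to cc) as in Borel's Proposition V.3.12, adding only the observations that $\mc E^*$ is $\mf X$-cc by definition and that $\mf t_{\leq \vec p}^{X_{n-k}}$ preserves $\mf X$-cc. The details you supply — finite generation of $T^{\vec p_2(Z)}$ of a finitely generated module over a PID, and the identification of the localized truncation on $Z$ via Lemma \ref{L: properties}(4) and Lemma \ref{L: sheaf ttau} — are exactly the points the paper flags as the only new ingredients.
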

\begin{proof}
This theorem is completely analogous to \cite[Proposition V.3.12]{Bo} and follows from the machinery of \cite[Section V.3]{Bo}.  The only additional observations needed are that $\mc E$ is $\mf X$-cc by definition and that $\mf t_{\leq \vec p}^{X_{n-k}}$ preserves the properties of being $\mf X$-cc.
\end{proof}

As in \cite{GM2, Bo}, it will be useful to reformulate the axioms. First we show that axiom \eqref{A': attach} can be replaced by an equivalent condition. Then we will formulate the axioms TAx1' and show they are equivalent to TAx1.

\begin{lemma}\label{A''}
Suppose $\mc S$ satisfies axiom TAx1\eqref{A: truncate}.  Then TAx1\eqref{A: attach} is equivalent to the following condition:  Suppose  $x\in Z\subset X_{n-k}$, $k>0$, and let $j:X_{n-k}\into X$ be the inclusion; then
\begin{enumerate}
\item  $H^i((j^!\mc S)_x)=0$ for $i\leq \vec p_1(Z)+1$,
\item  $H^{\vec p_1(Z)+2}((j^!\mc S)_x)$ is $\vec p_2(Z)$-torsion free.
\end{enumerate}
\end{lemma}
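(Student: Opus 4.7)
The plan is to derive both directions of the equivalence from the standard attachment distinguished triangle. Let $p = \vec p_1(Z)$ and $\wp = \vec p_2(Z)$, and denote by $j: X_{n-k} \hookrightarrow U_{k+1}$ the closed inclusion (complementary to the open inclusion $i_k: U_k \hookrightarrow U_{k+1}$). The central tool is the attachment triangle
\begin{equation*}
j_* j^! \mc S^* \to \mc S^*|_{U_{k+1}} \xrightarrow{\alpha_k} Ri_{k*}\mc S^*_k \to j_* j^! \mc S^*[1],
\end{equation*}
which on taking stalks at $x \in Z$ yields a long exact sequence
\begin{equation*}
\cdots \to H^i((j^! \mc S^*)_x) \to H^i(\mc S^*_x) \xrightarrow{\alpha_{k,*}} H^i((Ri_{k*}\mc S^*_k)_x) \to H^{i+1}((j^! \mc S^*)_x) \to \cdots
\end{equation*}
in which the middle map is precisely the one induced by the attachment map $\alpha_k$. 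The $\mf X$-cc hypothesis is what ensures we have good control of these stalk computations.

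For the forward direction, assume $H^i((j^! \mc S^*)_x) = 0$ for $i \leq p+1$ and $H^{p+2}((j^! \mc S^*)_x)$ is $\wp$-torsion free. Then for $i \leq p$ the two flanking terms $H^i((j^! \mc S^*)_x)$ and $H^{i+1}((j^! \mc S^*)_x)$ both vanish, forcing $\alpha_{k,*}$ to be an isomorphism in these degrees. At the critical degree $i = p+1$, the vanishing of $H^{p+1}((j^! \mc S^*)_x)$ and of $H^{p+2}(\mc S^*_x)$ (the latter from TAx1\eqref{A: truncate}) reduces the sequence to a short exact sequence
\begin{equation*}
0 \to H^{p+1}(\mc S^*_x) \to H^{p+1}((Ri_{k*}\mc S^*_k)_x) \to H^{p+2}((j^! \mc S^*)_x) \to 0.
\end{equation*}
I would then invoke a small algebraic observation: given a short exact sequence $0 \to A \to B \to C \to 0$ with $A$ already $\wp$-torsion, the image of $A$ coincides with $T^\wp B$ if and only if $C$ is $\wp$-torsion free. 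Applied here, with $A = H^{p+1}(\mc S^*_x)$ which is $\wp$-torsion by TAx1\eqref{A: truncate} and $C$ being $\wp$-torsion free by hypothesis, we conclude $\alpha_{k,*}$ maps $H^{p+1}(\mc S^*_x)$ isomorphically onto $T^\wp H^{p+1}((Ri_{k*}\mc S^*_k)_x)$, yielding TAx1\eqref{A: attach}.

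The reverse direction is essentially the same diagram chase run backwards. Assuming TAx1\eqref{A: attach}, the isomorphism of $\alpha_{k,*}$ in degrees $\leq p$ together with the injectivity of $\alpha_{k,*}$ in degree $p+1$ (which follows from the iso onto the $\wp$-torsion subgroup) forces $H^i((j^! \mc S^*)_x) = 0$ for $i \leq p+1$ by direct inspection of the long exact sequence. Then the same short exact sequence displayed above reappears, and the algebraic observation in its converse direction shows that $H^{p+2}((j^! \mc S^*)_x) \cong H^{p+1}((Ri_{k*}\mc S^*_k)_x)/T^\wp H^{p+1}((Ri_{k*}\mc S^*_k)_x)$ is necessarily $\wp$-torsion free.

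The only mildly delicate step is the passage at the boundary degree $p+1$, where the condition is not a clean isomorphism but an isomorphism onto the $\wp$-torsion subgroup; the algebraic observation about torsion subgroups inside short exact sequences is what translates this cleanly into a torsion-freeness condition one degree higher. Everything else is a routine chase in the long exact sequence, given that TAx1\eqref{A: truncate} is available to kill the stalk cohomology of $\mc S^*$ beyond degree $p+1$.
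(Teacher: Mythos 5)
Your proof is correct and follows essentially the same route as the paper's: both proceed from the attachment distinguished triangle, take the long exact sequence on stalks, observe that TAx1(3) kills $H^{p+2}(\mc S^*_x)$ so that everything reduces to a short exact sequence at the critical degree, and then translate the ``iso onto the $\wp$-torsion subgroup'' condition into $\wp$-torsion-freeness of the quotient (your packaging of this as a standalone algebraic observation is a clean way to isolate the only non-routine step). One small point you pass over silently: the lemma's $j$ is the inclusion $X_{n-k}\hookrightarrow X$, whereas your $j$ is into $U_{k+1}$; the paper explicitly records the identification $j^!\mc S^*\cong j_k^!(\mc S^*|_{U_{k+1}})$ via $j=w\circ j_k$ with $w:U_{k+1}\hookrightarrow X$ open, so $w^!=w^*$ -- worth including so the notation in the long exact sequence matches the statement being proved.
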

\begin{proof}
First, let $j:X_{n-k}\into X$, $j_k:X_{n-k}\into U_{k+1}$, and $w: U_{k+1}\into X$. So $j=wj_k$, and we have $j^!=(wj_k)^!\cong  j_k^!w^!\cong j_k^!w^*$ because $w$ is an open inclusion. So $j^!\mc S\cong j_k^!\mc S_{k+1}$, letting $\mc S_k=\mc S|_{U_k}$. 

For $x\in Z$, there is a long exact sequence (see \cite[V.1.8(7)]{Bo})
\begin{diagram}
&\rTo & H^i( (j_k^!\mc S_{k+1})_x)&\rTo &H^i( \mc S_{k+1,x})&\rTo^{\alpha} &H^i( (Ri_{k*}\mc S_{k})_x)&\rTo &.
\end{diagram} 
We have just seen that we must have $H^i( (j_k^!\mc S_{k+1})_x)\cong H^i((j^!\mc S)_x)$, and of course $\mc S_{k+1,x}=\mc S_x$.

Suppose $\mc S$ satisfies TAx1\eqref{A: attach}. Then we have $H^i( (j^!\mc S)_x)=0$ for $i\leq p_1(Z)+1$, noting that $\alpha$ remains injective in degree $p_1(Z)+1$. Around degree $p_1(Z)+2$ and using TAx\eqref{A: truncate}, the sequence specializes to 
\begin{diagram}
0&\rTo & H^{\vec p_1(Z)+1}( \mc S_x)&\rTo^{\alpha} &H^{\vec p_1(Z)+1}( (Ri_{k*}\mc S_k)_x)&\rTo &H^{\vec p_1(Z)+2}( (j^!\mc S)_x)&\rTo &0,
\end{diagram}
and since $\alpha$ is an isomorphism onto $T^{\vec p_2(Z)}H^{\vec p_1(Z)+1}( (Ri_{k*}\mc S_k)_x)$, it follows that $H^{\vec p_1(Z)+2}( (j^!\mc S)_x)\cong H^{\vec p_1(Z)+1}( (Ri_{k*}\mc S_k)_x)/T^{\vec p_2(Z)}H^{\vec p_1(Z)+1}( (Ri_{k*}\mc S_k)_x)$ is $\vec p_2(Z)$-torsion free.

Conversely, if $j^!\mc S$ satisfies the conditions stated in the lemma, then certainly $\alpha$ is an isomorphism on cohomology for $i\leq \vec p_1(Z)$. Around $H^{\vec p_1(Z)+2}( (j^!\mc S)_x)$, we have the same specialized sequence as above.
As $H^{\vec p_1(Z)+1}( \mc S_x)$ is $\vec p_2(Z)$-torsion by assumption, it must map injectively to the $\vec p_2(Z)$-torsion subgroup of $H^{\vec p_1(Z)+1}( (Ri_{k*}\mc S_k)_x)$. But we assume $H^{\vec p_1(Z)+2}( (j^!\mc S)_x)$ is $\vec p_2(Z)$-torsion free, so $\alpha$ must take $H^{\vec p_1(Z)+1}( \mc S_x)$ onto $T^{\vec p_2(Z)}H^{\vec p_1(Z)+1}( (Ri_{k*}\mc S_k)_x)$. 
\end{proof}

\begin{definition}\label{T: Ax1'}
Next, we say $\ms S$ satisfies the \emph{Axioms TAx1'$(X,\vec p, \mc E)$ (or simply TAx1')} if it is $\mf X$-clc and

\begin{enumerate}
\item\label{A': bounded} $\ms S$ is quasi-isomorphic to a complex that is bounded and that is $0$ in negative degrees;
\item \label{A': coeffs} $\ms S|_{U_1}\cong\mc E$;
\item \label{A': truncate} if $x\in Z\subset X_{n-k}$, where $Z$ is a singular stratum, then
$H^i(\ms S_x)=0$ for $i>\vec p_1(Z)+1$ and $H^{\vec p_1(Z)+1}(\ms S_x)$ is $\vec p_2(Z)$-torsion;

\item \label{A': attach}  if $x\in Z\subset X_{n-k}$, where $Z$ is a singular stratum, and $f_x:x\into X$ is the inclusion, then

\begin{enumerate}
\item  $H^i(f_x^!\mc S)=0$ for $i\leq \vec p_1(Z)+n-k+1$
\item $H^{p_1(Z)+n-k+2}(f_x^!\mc S)$ is $\vec p_2(Z)$-torsion free.

\end{enumerate}
\end{enumerate}
\end{definition}

\begin{theorem}\label{T: ax equiv}
TAx1' is equivalent to TAx1.
\end{theorem}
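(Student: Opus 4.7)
The two axiom systems share conditions (1), (2), and (3), so the task reduces to establishing the equivalence of the attachment axioms \eqref{A: attach} and \eqref{A': attach}, along with the observation that $\mf X$-cc is automatic under TAx1. The plan is to route both implications through the reformulation given by Lemma \ref{A''} and to convert its conditions on the stratum inclusion $j:X_{n-k}\into X$ into the corresponding conditions on the point inclusion $f_x:\{x\}\into X$ appearing in TAx1'.

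First I would note that any $\ms S^*$ satisfying TAx1 is automatically $\mf X$-cc: by Theorem \ref{T: axioms}, $\ms S^*\sim_{qi}\ms P^*_{X,\vec p,\mc E^*}$, and the latter is $\mf X$-cc by Theorem \ref{T: cc}. Thus in either direction of the equivalence we may assume $\mf X$-cc holds, and Lemma \ref{A''} then lets us replace \eqref{A: attach} by the equivalent pair of conditions that $H^i((j^!\ms S^*)_x)=0$ for $i\leq \vec p_1(Z)+1$ and that $H^{\vec p_1(Z)+2}((j^!\ms S^*)_x)$ is $\vec p_2(Z)$-torsion free. The remaining work is to check that these $j^!$ conditions match the $f_x^!$ conditions of \eqref{A': attach}.

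For that translation I would factor $f_x=j\circ g_x$, where $g_x:\{x\}\into X_{n-k}$, so that $f_x^!\ms S^*\simeq g_x^!(j^!\ms S^*)$. Because $\ms S^*$ is $\mf X$-cc, $j^!\ms S^*$ is cohomologically locally constant on the topological $(n-k)$-manifold $X_{n-k}$ in a neighborhood of $x$. For the inclusion of a point into an $(n-k)$-manifold with such a sheaf complex, standard properties of the dualizing complex of a manifold yield $g_x^!\mc G^*\simeq \mc G^*_x[-(n-k)]$ at the level of cohomology modules, the orientation sheaf contributing only a (noncanonical) $R$-module isomorphism at the point. Applied to $\mc G^*=j^!\ms S^*$, this gives $H^i(f_x^!\ms S^*)\cong H^{i-(n-k)}((j^!\ms S^*)_x)$, so the vanishing $H^i(f_x^!\ms S^*)=0$ for $i\leq\vec p_1(Z)+n-k+1$ translates exactly to $H^{i'}((j^!\ms S^*)_x)=0$ for $i'\leq\vec p_1(Z)+1$, and the $\vec p_2(Z)$-torsion-freeness of $H^{\vec p_1(Z)+n-k+2}(f_x^!\ms S^*)$ translates to that of $H^{\vec p_1(Z)+2}((j^!\ms S^*)_x)$, matching the conditions of Lemma \ref{A''} precisely.

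Assembling the pieces: given TAx1, the $\mf X$-cc property follows from Theorems \ref{T: axioms} and \ref{T: cc}, and Lemma \ref{A''} together with the shift identity yields \eqref{A': attach}, so TAx1 implies TAx1'. Conversely, TAx1' supplies $\mf X$-cc by assumption; the shift identity converts \eqref{A': attach} into the $j^!$ form appearing in Lemma \ref{A''}, and the lemma returns \eqref{A: attach}, so TAx1' implies TAx1. The principal obstacle I anticipate is the clean justification of $g_x^!\mc G^*\simeq \mc G^*_x[-(n-k)]$ for $\mf X$-cc complexes, but this is a standard manifold-level computation once one knows $j^!\ms S^*$ is clc near $x$ and that $X_{n-k}$ is a topological $(n-k)$-manifold.
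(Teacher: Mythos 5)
Your proof is correct and follows essentially the same route as the paper: both reduce to the attachment axiom, observe that $\mf X$-cc holds automatically under TAx1 (via Theorems \ref{T: axioms} and \ref{T: cc}), factor $f_x=j\circ\ell_x$ to obtain $H^i(f_x^!\mc S^*)\cong H^{i-n+k}((j^!\mc S^*)_x)$ from the manifold structure of $X_{n-k}$ (the paper cites Borel, Proposition V.3.7.b, for the shift identity you describe), and then invoke Lemma \ref{A''}.
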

\begin{proof}
If $x\in Z\subset X_{n-k}$ and  $\ell_x:x\into X_{n-k}$, $j:X_{n-k}\into X$, and $f_x:x\into X$ are the inclusions, then $f_x=j\circ \ell_x$, so $f_x^!=\ell_x^!j^!$. So $H^i(f_x^!\mc S)=H^i(\ell_x^!j^!\mc S)$, which, since $X_{n-k}$ is an $n-k$ dimensional manifold, is isomorphic to $H^{i-n+k}( (j^!\mc S)_x)$ by \cite[Proposition V.3.7.b]{Bo}. This last isomorphism uses that $j^!\mc S$ is $\mf X$-clc, which follows from $\mc S$ being $\mf X$-clc by \cite[Proposition V.3.10]{Bo}; that $\mc S$ is $\mf X$-clc holds by assumption if $\mc S$ satisfies TAx1' and by Theorem \ref{T: cc} if $\mc S$ satisfies TAx1. 
Thus the theorem follows from Lemma \ref{A''}.
\end{proof}

\subsection{Vanishing results}\label{S: vanishing}

In this section we prove some vanishing results that are both interesting in their own right and useful in our proof of duality. 
For this we first need a torsion-sensitive version of \cite[Lemma V.9.5]{Bo}, though we simplify a bit by assuming that $\mc S$ is $\mf X$-cc, which is all we will need. It will also be sufficient for our later needs to fix a collection of primes $\wp$ and not vary it by stratum. When $\wp=\emptyset$, the following lemma is a special case of  \cite[Lemma V.9.5]{Bo}.

\begin{lemma}\label{L: vanishing}
Let $X$ be a stratified pseudomanifold, $\ell\in \Z$, and $\mc S$ a bounded-below $\mf X$-cc complex of sheaves on $X$. Suppose for each $x\in X$ that if $x\in X_k$ then $H^i(\mc S_x)=0$ for $i>\ell-k+1$ and $H^{\ell-k+1}(\mc S_x)$ is $\wp$-torsion. Then $\H_c^i(X;\mc S)=0$ for $i>\ell+1$ and  
$\H_c^{\ell+1}(X;\mc S)$ is $\wp$-torsion.
\end{lemma}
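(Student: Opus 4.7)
The plan is to argue by induction on the depth $d$ of the stratification of $X$, defined as the number of distinct dimensions $k$ for which $X_k\neq\emptyset$. The base case will be handled via a hypercohomology spectral sequence on a manifold, and the inductive step will use the open/closed long exact sequence for compactly supported hypercohomology.

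For the base case $d=1$, the space $X$ consists of a single stratum $X_k$ and is thus a $k$-manifold. Since $\mc S^*$ is $\mf X$-cc, its cohomology sheaves $\mc H^q(\mc S^*)$ are locally constant with finitely generated stalks, and by hypothesis $\mc H^q(\mc S^*)=0$ for $q>\ell-k+1$. The hypercohomology spectral sequence
\[
E_2^{p,q}=\H_c^p(X;\mc H^q(\mc S^*))\Longrightarrow \H_c^{p+q}(X;\mc S^*)
\]
converges since $\mc S^*$ is bounded below, and $E_2^{p,q}$ vanishes unless $0\leq p\leq k$ and $q\leq \ell-k+1$ (using that a $k$-manifold has cohomological $c$-dimension $k$); hence $\H_c^i(X;\mc S^*)=0$ for $i>\ell+1$. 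For total degree $i=\ell+1$, the only possibly nonzero term is $E_2^{k,\ell-k+1}=\H_c^k(X;\mc H^{\ell-k+1}(\mc S^*))$, and $\H_c^{\ell+1}(X;\mc S^*)$ is a subquotient of this. The coefficient sheaf is locally constant with finitely generated $\wp$-torsion stalks, so decomposing $X$ into its connected components (which are open by local connectedness of manifolds), $\H_c^k$ splits as a direct sum, and on each component the sheaf is constant with value annihilated by a single element of $S(\wp)$. Thus each summand, and hence the direct sum, is $\wp$-torsion, so $\H_c^{\ell+1}(X;\mc S^*)$ is $\wp$-torsion as a subquotient of it.

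For the inductive step, let $j=\min\{k:X_k\neq \emptyset\}$. Then $X^j=X_j$ is closed in $X$ and is itself a $j$-manifold, while $U=X-X_j$ inherits the structure of an open stratified pseudomanifold of depth $d-1$ with strata $X_{j+1},\ldots,X_n$. The restrictions $\mc S^*|_{X_j}$ and $\mc S^*|_U$ are still $\mf X$-cc and satisfy the stalk hypotheses of the lemma (stalks are unchanged by restriction). The open/closed long exact sequence
\[
\cdots\to \H_c^i(U;\mc S^*|_U)\to \H_c^i(X;\mc S^*)\to \H_c^i(X_j;\mc S^*|_{X_j})\to \H_c^{i+1}(U;\mc S^*|_U)\to\cdots
\]
then sandwiches $\H_c^i(X;\mc S^*)$ between two groups that vanish for $i>\ell+1$ (by the inductive hypothesis on $U$ and the base case on $X_j$) and are $\wp$-torsion for $i=\ell+1$. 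Since extensions of $\wp$-torsion modules are $\wp$-torsion, the conclusion follows.

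The main technical subtlety is the verification that a locally constant sheaf with finitely generated $\wp$-torsion stalks on a manifold has $\wp$-torsion compactly supported cohomology. The crucial observation is that $\H_c^*$ distributes over disjoint unions of open subsets as a direct sum (not a product), because a compactly supported section touches only finitely many components. This lets us reduce to the case of a single connected component, where the sheaf is constant and annihilated by a single element of $S(\wp)$, so all of its compactly supported cohomology is likewise annihilated by that element.
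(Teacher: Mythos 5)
Your proposal is correct and follows essentially the same route as the paper: the manifold case is handled by the compactly supported hypercohomology spectral sequence $E_2^{p,q}=\H^p_c(X;\mc H^q(\mc S^*))$ together with the analysis of the single surviving corner term, and the general case follows by induction using the open--closed long exact sequence for $\H_c^*$; the only (immaterial) differences are that you induct on depth and excise the minimal closed stratum, whereas the paper inducts on dimension and excises the open regular part, and that the paper sees the torsion of the corner term via the singular cochain description rather than via your annihilator argument. One small correction to your final paragraph: a locally constant sheaf on a connected manifold need not be \emph{constant}, but your argument only requires that all stalks over a given component be annihilated by a common element $n\in S(\wp)$ --- which holds because they are isomorphic finitely generated $\wp$-torsion modules --- so that multiplication by $n$ is the zero endomorphism of the sheaf and hence annihilates its compactly supported cohomology.
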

\begin{proof}
We first suppose $X$ is an $n$-manifold, trivially filtered ($X^k=\emptyset$ for $k<n$). Then our hypothesis is $H^i(\mc S_x)=0$ for $i>\ell-n+1$ and $H^{\ell-n+1}(\mc S_x)$ is $\wp$-torsion.
The module $\H^i(X;\mc S)$ is the abutment of a spectral sequence with $E_2^{p,q}=H^p_c(X;\mc H^q(\mc S))$ \cite[Section V.1.4]{Bo}. By \cite[Definitions II.16.3 and II.16.6 and Corollary II.16.28]{Br}, since $X$ is an $n$-manifold we have $H^p_c(X;\mc A)=0$ for $p>n$ and any sheaf $\mc A$ of $R$-modules. If  $p\leq n$ and $p+q>\ell+1$, then $q>\ell-n+1$; so each $E_2^{p,q}$ is $0$ for $p+q>\ell+1$ and $\H^i_c(X;\mc S)=0$ for $i>\ell+1$. 
Similarly, if $p+q=\ell+1$ then the only possible nonzero $E_2^{p,q}$ term is $H^n_c(X;\mc H^{\ell-n+1}(\mc S))$. Raising either index results in a $0$ module, so this is also the only $E^{p,q}_\infty$ term for $p+q=\ell+1$, by which $\H_c^{\ell+1}(X;\mc S)\cong H^n_c(X;\mc H^{\ell-n+1}(\mc S))$. By our assumptions, $\mc H^{\ell-n+1}(\mc S)$ is a locally-constant sheaf of finitely-generated $\wp$-torsion modules. By \cite[Theorem III.1.1]{Br}, $H^n_c(X;\mc H^{\ell-n+1}(\mc S))$ is isomorphic to the classical singular compactly supported cohomology with coefficients in $\mc H^{\ell-n+1}(\mc S)$. It is then evident from the definition \cite[page 26]{Bo} that $H^n_c(X;\mc H^{\ell-n+1}(\mc S))$ is $\wp$-torsion; in fact each singular cochain is $\wp$-torsion. 

We now can now proceed to more general $X^n$ by induction on dimension. If $n=0$ then we are done by the manifold case. Suppose now that the lemma is proven through dimension $n-1$, and let $\mc S$ satisfy the hypotheses on $X=X^n$. 
We have a long exact sequence \cite[Remark 2.4.5]{DI04}
$$\cdots \to \H^i_c(X-X^{n-1};\mc S)\to \H^i_c(X;\mc S)\to \H^i_c(X^{n-1};\mc S)\to \cdots.$$
The restriction of $\mc S$ to $X^{n-1}$ and $X-X^{n-1}$ continues to satisfy the hypotheses on each of these subspaces, so by the induction assumption and the manifold case we have 
$\H^i_c(X-X^{n-1};\mc S)=\H^i_c(X^{n-1};\mc S)=0$ for $i>\ell+1$, and so $\H^i_c(X;\mc S)=0$ for $i>\ell+1$. Similarly, by induction and the manifold case $\H^{\ell+1}_c(X-X^{n-1};\mc S)$ and $\H^{\ell+1}_c(X^{n-1};\mc S)$ are each $\wp$-torsion. It follows that $\H^{\ell+1}_c(X-X^{n-1};\mc S)$ is $\wp$-torsion.
\end{proof}

\begin{theorem}\label{T: vanishing}
Suppose $\mc S$ satisfies the axioms TAx1$(X,\vec p, \mc E)$  on the $n$-dimensional stratified pseudomanifold $X$ for some $ts$-perversity $\vec p$ and $ts$-coefficient system $\mc E$.
If $\mc H^1(\mc E)$ is a local system of $\wp$-torsion modules then:

\begin{enumerate}
\item For each open set $U\subset X$ we have $\H^i_c(U;\mc S)=0$ for $i>n+1$ and $\H^{n+1}_c(U;\mc S)$ is $\wp$-torsion.

\item\label{I: local} If $x\in X_{n-k}$ for $k>0$ then $H^i(\mc S_x)=0$ for $i>k$ and $H^{k}(\mc S_x)$ is $\wp$-torsion.

\end{enumerate}

\end{theorem}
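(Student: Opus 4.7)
The plan is to induct on $n = \dim X$, proving (2) first and deducing (1) from it via Lemma \ref{L: vanishing}. The base case $n = 0$ is immediate: statement (2) is vacuous since there are no singular strata, and for (1) the compactly supported hypercohomology on a discrete set splits as a direct sum of stalk cohomology of $\mc E^*$, so the hypothesis on $\mc H^1(\mc E^*)$ delivers the required vanishing above degree $1 = n+1$ and $\wp$-torsion in degree $1$.

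For the inductive step of (2), I would fix $x \in Z \subset X_{n-k}$ with $k > 0$ and pass to a distinguished neighborhood $\R^{n-k} \times cL$, where $L$ is a compact $(k{-}1)$-dimensional stratified pseudomanifold. As in Example \ref{E: GM}, the $\mf X$-constructibility from Theorem \ref{T: cc} together with \cite[Lemma V.3.9 and Proposition V.3.10]{Bo} identifies $H^i((Ri_{k*}\mc S^*_k)_x) \cong \H^i(L;\mc S^*|_L)$. The inductive hypothesis (1) applied to the $(k{-}1)$-dimensional $L$ then supplies $\H^i(L;\mc S^*|_L) = 0$ for $i > k$ with $\H^k(L;\mc S^*|_L)$ being $\wp$-torsion. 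I would then combine this with axioms TAx1\eqref{A: truncate} and TAx1\eqref{A: attach} via a case split on the sign of $\vec p_1(Z) + 1 - k$: when $\vec p_1(Z) + 1 < k$ the truncation axiom alone forces $H^i(\mc S^*_x) = 0$ for $i > \vec p_1(Z) + 1$, hence for $i \ge k$; when $\vec p_1(Z) + 1 = k$, the attachment axiom realizes $H^k(\mc S^*_x)$ as the $\vec p_2(Z)$-torsion submodule of the $\wp$-torsion module $\H^k(L;\mc S^*|_L)$; and when $\vec p_1(Z) + 1 > k$, attachment identifies $H^k(\mc S^*_x) \cong \H^k(L;\mc S^*|_L)$ while the link vanishing propagates upward through attachment to kill $H^i(\mc S^*_x)$ for $k < i \le \vec p_1(Z) + 1$.

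For (1), I would apply Lemma \ref{L: vanishing} to $\mc S^*|_U$ on an arbitrary open $U \subset X$ (itself an unrestricted stratified pseudomanifold, with $\mc S^*|_U$ bounded-below and $\mf X$-cc by TAx1\eqref{A: bounded} and Theorem \ref{T: cc}) taking $\ell = n$. The stalk hypothesis of the lemma demands $H^i(\mc S^*_x) = 0$ for $i > n - j + 1$ with $\wp$-torsion at $i = n - j + 1$ for $x \in X_j$. On singular strata ($j = n - k$, $k > 0$) this asks only for $H^i(\mc S^*_x) = 0$ for $i > k + 1$, which is strictly weaker than (2); on regular strata ($j = n$) it reduces to $H^i(\mc S^*_x) = 0$ for $i > 1$ with $H^1(\mc S^*_x)$ being $\wp$-torsion, which follows from TAx1\eqref{A: coeffs} and the hypothesis on $\mc H^1(\mc E^*)$. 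The lemma then delivers the desired vanishing and $\wp$-torsion conclusions for $\H^*_c(U;\mc S^*)$.

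The main technical obstacle is the link identification and the associated bookkeeping that allows the inductive hypothesis to be invoked on $L$ --- in particular, that $\mc S^*|_L$ is bounded-below and constructible for the induced stratification of $L$ as a stratified pseudomanifold of strictly smaller dimension. Both properties are standard consequences of the distinguished neighborhood structure and the preservation of constructibility under restriction, but they warrant explicit mention to make the induction clean.
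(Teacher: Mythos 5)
Your overall strategy (Lemma \ref{L: vanishing} plus a link computation plus induction) matches the paper's, but the induction is organized differently, and one step of your version needs more than you give it. The paper inducts on the \emph{depth} of $X$ and, in the inductive step, applies only Lemma \ref{L: vanishing} to the link $L$: the hypotheses of that lemma are pure stalk conditions on $\mc S^*|_L$, and these follow from statement (2) of the inductive hypothesis applied to $U_m$ (every singular point of $L$ lies in a stratum $X_{n-j}$ with $j<m$, hence in the lower-depth open set $U_m$ where the axioms restrict for free). You instead induct on dimension and invoke statement (1) of the theorem for the pair $(L,\mc S^*|_L)$. That requires $\mc S^*|_L$ to satisfy the full axioms TAx1$(L,\vec p,\mc E^*|_L)$ --- not merely to be bounded and constructible, which is all you check. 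The support axioms restrict trivially, but the attaching axiom TAx1\eqref{A: attach} on $L$ does not: it is exactly the content of Lemma \ref{L: link sheaf}, whose proof in the paper goes through the reformulation TAx1' and the identification $r^!\cong r^*[-m]$ of Lemma \ref{L: nns}. Dismissing this as a ``standard consequence of preservation of constructibility under restriction'' is the one real gap in your write-up; without it the appeal to the inductive hypothesis on $L$ is unjustified. (It is not circular to use Lemma \ref{L: link sheaf} here, since its proof is independent of Theorem \ref{T: vanishing}, but it must be supplied. Alternatively, you could avoid it entirely by applying Lemma \ref{L: vanishing} to $L$ with $\ell=k-1$, feeding in statement (2) for the lower-codimension strata via a secondary induction on $k$ --- which is essentially what the paper's depth induction accomplishes.)

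The remainder is fine: your deduction of (1) from (2) via Lemma \ref{L: vanishing} with $\ell=n$ is the paper's argument verbatim, and your three-way case split on $\vec p_1(Z)+1$ versus $k$ is a correct, if slightly more laborious, version of the paper's one-line observation that the axioms exhibit $H^i(\mc S^*_x)$ as a submodule of $\H^i(L;\mc S^*|_L)$ in every degree.
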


\begin{remark}\label{R: vanishing}
Note that if $H^1(\mc E_x)\neq 0$ then the second property fails for $k=0$, making the restriction $k>0$ necessary.
On the other hand, if $H^1(\mc E_x)= 0$, we can suppose $\wp=\emptyset$ and conclude that each $\H^i_c(U;\mc S)=0$ for $i>n$.
\end{remark}

\begin{proof}[Proof of Theorem \ref{T: vanishing}]
We will perform an induction argument over the depth of $X$, utilizing Lemma \ref{L: vanishing} and taking $\ell=n$.

First, suppose $X$ has depth $0$ so that $X=U_1$ is a manifold. Since $\mc S|_{U_1}\cong \mc E$, we have for $x\in U_1$ that $H^i(\mc S_x)=H^i(\mc E_x)=0$ for $i>1$ and $H^1(\mc S_x)=H^1(\mc E_x)$ is $\wp$-torsion. The results about $\H^i_c(U;\mc S)$ follow from Lemma \ref{L: vanishing} taking $\ell=n$.

Now, assume as induction hypothesis that we have shown the theorem for any stratified pseudomanifold of depth $K$ for $0\leq K<m$, and let $X$ have depth $m$. In particular, the proposition holds then for  $U_m=X-X^{n-m}$ and $\mc S|_{U_m}$. 
We will extend the result to open subsets of $X=U_m\cup X_{n-m}$ and points $x\in X_{n-m}$.

Let $x\in Z\subset X_{n-m}$. From the axioms and Lemmas \ref{L: axioms} and \ref{L: sheaf ttau}, we know that  $H^i(\mc S_x)$ is a submodule of $H^i((Ri_{m*}\mc S|_{U_m})_x)\cong \dlim_{x\in U}\H^i(U-Z;\mc S)$. 
 Since $x$ has a cofinal system of distinguished neighborhoods, we can suppose $U-Z\cong \R^{n-m+1}\times L$, where $L$ is the $m-1$ dimensional link of $Z$. Then $\H^i(U-Z;\mc S)\cong \H^i(L;\mc S|_L)$ by \cite[Lemma V.3.8.b]{Bo}, as we have shown in Theorem \ref{T: cc} that sheaves satisfying the axioms must be $\mf X$-cc.
Recall that (fixing a specific embedding of $L$) we have $L^{m-1-j}=X^{n-j}\cap L$ for all $j$. So if  $y\in L_{m-1-j}$, $j>0$, then $y\in X_{n-j}$ and so by induction hypothesis 
 $H^i(\mc S_y)=0$ for $i>j$ and $H^{j}(\mc S_y)$ is $\wp$-torsion. If $j=0$, then we have $L_{m-1}=X_{n}\cap L$, and we know for such points that $H^i(\mc S_y)=H^i(\mc E_y)=0$ for $i>1$ while $H^1(\mc S_y)=H^1(\mc E_y)$ is $\wp$-torsion. So in the full range $0\leq j<m$, the hypotheses of Lemma \ref{L: vanishing} hold on $L$ for $\mc S|_L$ with $\ell=m-1$ (in fact, the hypotheses are possibly sharp only on the top strata $L_{m-1}$). 
So, as $L$ is compact, we have $\H^i(L;\mc S|_L)=\H_c^i(L;\mc S|_L)=0$ for $i>m$ and $\H^m(L;\mc S|_L)$ is $\wp$-torsion, implying the same for $H^i(\mc S_x)$.

Finally, we can employ Lemma \ref{L: vanishing} again on any open $U\subset X$ with $\ell=n$ to conclude $\H^i_c(U;\mc S)=0$ for $i>n+1$ and $\H^{n+1}_c(U;\mc S)$ is $\wp$-torsion\footnote{Once again the hypotheses needed for Lemma \ref{L: vanishing} are only sharp on the top strata $X_{n}$ and only if $H^1(\mc E)\neq 0$. The reader might therefore wonder if we could prove a stronger vanishing result in the case $\mc H^1(\mc E)=0$. But we have already seen in Remark \ref{R: vanishing} that the theorem as stated is enough in this case to tell us $\mc H^{i}_c(U)=0$ for all $i>n$, and the conclusion $H^i(\mc S_x)=0$ for $i>m-1$ for $x\in X_{n-m}$, $m>0$, follows similarly from the argument on the links. So this stronger result is already a consequence of the current one. If we assume further that $\mc H^1(\mc E)=0$ and $\mc H^0(\mc E)$ is $\wp$-torsion then we could strengthen our application of Lemma \ref{L: vanishing}
 to conclude that $\H^n_c(U;\mc S)$ is $\wp$-torsion, but in fact in this case it's not hard to modify the argument of Lemma \ref{L: vanishing} to see directly that \emph{all} of the  $\H^i_c(U;\mc S)$ must be $\wp$-torsion; cf.\ Lemma \ref{L: all torsion}.}.  This completes the induction.  
\end{proof}

\begin{remark}
The proposition demonstrates that we could limit our perversities $\vec p$ so that $\vec p_1(Z)\leq \codim(Z)$, as stalk cohomology of ts-Deligne sheaves always vanishes in higher degrees so that truncating in higher degrees gives nothing new. 
\end{remark}

\subsection{Duality}\label{S: duality}

In this section we prove our duality theorem for ts-Deligne sheaves. Throughout we let 
 $\mc D_X$ denote the Verdier dualizing functor on $X$, omitting the space when clear from context.

\begin{definition}\label{D: dual perv}
Given a ts-perversity $\vec p$, we define the \emph{dual ts-perversity} by $D\vec p=(D\vec p_1,D\vec p_2)$ with $D\vec p_1(Z)=\codim(Z)-2-\vec p_1(Z)$ and $D\vec p_2(Z)=\primeset{P}(R)-\vec p_2(Z)$, the complement of $\vec p_2(Z)$ in the set of primes (up to unit) of $R$. Notice that $D\vec p_1$ is the perversity that is complementary to the perversity $\vec p_1$ in the standard sense \cite[Definition 3.1.7]{GBF35}. 
\end{definition}

\begin{theorem}\label{T: duality}
Let $X$ be an $n$-dimensional stratified pseudomanifold, $\vec p$ a ts-perversity on $X$, and $\mc E$ a ts-coefficient system on $U_1$ over a principal ideal domain $R$. Then $\mc D \ms P_{\vec p,\mc E}[-n]$ is quasi-isomorphic to $\ms P_{D\vec p,\mc D\mc E[-n]}$ by a quasi-isomorphism that extends the identity morphism of $\mc D\mc E[-n]$ on $U_1$.
\end{theorem}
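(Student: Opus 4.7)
The plan is to verify that $\mc D_X \ms P^*_{\vec p,\mc E^*}[-n]$ satisfies the axioms TAx1'$(X,D\vec p,\mc D_{U_1}\mc E^*[-n])$ and then invoke the uniqueness half of Theorem \ref{T: axioms} via the equivalent formulation of Theorem \ref{T: ax equiv}. Boundedness and $\mf X$-cc transfer from $\ms P^*_{\vec p,\mc E^*}$ (Theorem \ref{T: cc}) because Verdier duality preserves both properties. For the restriction-to-$U_1$ axiom, the open inclusion $j:U_1\into X$ satisfies $j^*\mc D_X\cong \mc D_{U_1}j^*$, and a short computation using the Hom--Ext formulas of Section \ref{S: algebra} shows that $\mc D_{U_1}\mc E^*[-n]$ is again a ts-coefficient system, with the set of primes on each regular stratum swapped to the complementary set, exactly as required by $D\vec p$.

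The substantive content lies in axioms TAx1'(3) and (4). Both follow from the two duality interchanges $f_x^*\mc D_X\cong \mc D f_x^!$ and $f_x^!\mc D_X\cong \mc D f_x^*$ at a point inclusion $f_x:\{x\}\into X$, combined with the pointwise universal-coefficient identity
\begin{equation*}
H^j(\mc D \mc G^*)\cong \Hom(H^{-j}(\mc G^*),R)\oplus \Ext(H^{-j+1}(\mc G^*),R)
\end{equation*}
for bounded complexes of finitely generated $R$-modules (cf.\ Section \ref{S: algebra}). For axiom (3) one obtains
\begin{equation*}
H^i((\mc D_X \ms P^*[-n])_x)\cong \Hom(H^{n-i}(f_x^!\ms P^*),R)\oplus \Ext(H^{n-i+1}(f_x^!\ms P^*),R),
\end{equation*}
into which I substitute the vanishing and torsion-free information supplied by TAx1'(4) for $\ms P^*$. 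Writing $k=\codim(Z)$ and $D\vec p_1(Z)=k-2-\vec p_1(Z)$, an index check shows both summands vanish for $i>D\vec p_1(Z)+1$; at $i=D\vec p_1(Z)+1$ the Hom term vanishes and the Ext term is the torsion submodule of the $\vec p_2(Z)$-torsion-free finitely generated module $H^{\vec p_1(Z)+n-k+2}(f_x^!\ms P^*)$, hence is pure $D\vec p_2(Z)$-torsion. Axiom (4) is symmetric:
\begin{equation*}
H^i(f_x^!\mc D_X\ms P^*[-n])\cong \Hom(H^{n-i}(\ms P^*_x),R)\oplus \Ext(H^{n-i+1}(\ms P^*_x),R),
\end{equation*}
and TAx1'(3) for $\ms P^*$ forces both terms to vanish in the prescribed range (the Hom because $H^{\vec p_1(Z)+1}(\ms P^*_x)$ is torsion, the Ext because $H^j(\ms P^*_x)=0$ for $j>\vec p_1(Z)+1$), while at $i=D\vec p_1(Z)+n-k+2$ the surviving module is a direct sum of a free module and a $\vec p_2(Z)$-torsion module, which is $D\vec p_2(Z)$-torsion-free.

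To obtain a quasi-isomorphism \emph{extending} the identity on $U_1$, rather than merely an abstract one, I follow the standard Deligne-sheaf procedure (cf.\ \cite[Section V.2]{Bo}): a morphism specified on $U_1$ between sheaves satisfying the axioms lifts uniquely in the derived category through each step of the iterated truncate-and-pushforward construction, so the canonical identification $\mc D_{U_1}\mc E^*[-n]\cong (\mc D_X\ms P^*[-n])|_{U_1}$ extends to a global quasi-isomorphism. The main anticipated obstacle is purely bookkeeping: tracking the shifts by $\pm n$, the ranges where Hom and Ext terms must vanish or survive, and checking that at each boundary degree the torsion-free$\leftrightarrow$torsion swap implemented by Hom/Ext precisely matches the swap $\vec p_2(Z)\leftrightarrow D\vec p_2(Z)=\primeset{P}(R)-\vec p_2(Z)$ built into $D\vec p$. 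Conceptually, Verdier duality exchanges stalks with costalks and (via universal coefficients) torsion with torsion-free, and the ts-perversity formalism has been set up so that these two exchanges together realize $\vec p\leftrightarrow D\vec p$.
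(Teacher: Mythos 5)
Your strategy is the paper's own: verify that $\mc D_X\ms P^*_{\vec p,\mc E^*}[-n]$ satisfies TAx1'$(X,D\vec p,\mc D_{U_1}\mc E^*[-n])$ and conclude by the uniqueness half of Theorem \ref{T: axioms} via Theorem \ref{T: ax equiv}; the two Hom--Ext computations and the index bookkeeping you describe for axioms (3) and (4) are exactly what the paper does, as are the restriction argument for axiom (2), the identification of $\mc D_{U_1}\mc E^*[-n]$ as a ts-coefficient system for the complementary primes (Proposition \ref{P: dual coeff}), and the inductive extension of the identity on $U_1$.

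The one genuine gap is your dismissal of axiom (1) with ``Verdier duality preserves both properties.'' Boundedness above of the dual does follow from constructibility together with the support computation you already carry out, but the requirement that $\mc D_X\ms P^*[-n]$ vanish in degrees $<0$ is \emph{not} a formal consequence of duality preserving boundedness: the cohomology of the dual stalk at $x$ is
\begin{equation*}
H^i\bigl((\mc D_X\ms P^*[-n])_x\bigr)\cong \dlim_{x\in U}\Bigl(\Hom\bigl(\H^{n-i}_c(U;\ms P^*),R\bigr)\oplus\Ext\bigl(\H^{n-i+1}_c(U;\ms P^*),R\bigr)\Bigr),
\end{equation*}
so for $i<0$ one must know that $\H^{j}_c(U;\ms P^*)=0$ for $j>n+1$ and that $\H^{n+1}_c(U;\ms P^*)$ is torsion, for every small neighborhood $U$ of every point. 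This is Theorem \ref{T: vanishing}, whose proof is a nontrivial induction on depth reducing to the compactly supported hypercohomology of links via Lemma \ref{L: vanishing}; it is the reason Section \ref{S: vanishing} exists and does not follow from Theorem \ref{T: cc} or from the pointwise stalk bounds. Your proposal as written would leave axiom (1) unverified, and without it the characterization theorem cannot be invoked.
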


Before providing the proof, we make some observations, present some corollaries, and show that $\mc D\mc E[-n]$ is indeed a ts-coefficient system. In fact, we will see in Proposition \ref{P: dual coeff} that $\mc E$ and $\mc D\mc E[-n]$ are ts-coefficient systems with respect to complementary sets of primes. 

\begin{remark}
If our base ring is a field, then $\mc E$ is a locally-constant system of finitely-generated vector spaces and by Example \ref{E: GM} each $\ms P_{\vec p,\mc E}$ is the standard Deligne sheaf $\mc P_{\vec p_1,\mc E}$, where $\vec p_1$ is the first component of $\vec p$. In this case, Theorem \ref{T: duality} reduces to the Goresky-MacPherson duality theorem \cite{GM2} if $\vec p_1$ is a Goresky-MacPherson perversity. If $\vec p_1$ is a general perversity,  Theorem \ref{T: duality} with field coefficients reduces to the duality theorem  of \cite{GBF23}. 

Suppose $R$ is a PID, $\bar p$ is a general perversity, $\wp(\mc E)=\emptyset$, and $X$ is locally $(\bar p,\mc E)$-torsion-free in the sense of \cite{GS83} (see also \cite{GBF35}), i.e.  for each singular stratum $Z$ and each $x\in Z$, the $R$-module $I^{\bar p}H_{\codim(Z)-2-\bar p(Z)}(L_x;\mc E)$ is $R$-torsion-free, where $L_x$ is the link of $x$ in $X$. In this case again by Example \ref{E: GM} we have $\ms P_{\vec p,\mc E}=\mc P_{\bar p,\mc E}$ for any $\vec p$ such that $\vec p_1=\bar p$, and Theorem \ref{T: duality} reduces to the duality theorem of Goresky and Siegel \cite{GS83} if $\bar p$ is a Goresky-MacPherson perversity or the duality theorem  proven in \cite{GBF23} for more general perversities.

Finally, suppose that $\bar p$ is a Goresky-MacPherson perversity, that $\wp(\mc E)=\emptyset$, and that for each singular stratum $Z$ and each $x\in Z$, $I^{\bar p}H_{*}(L_x;\mc E)$ is $R$-torsion. Suppose further that $\vec p$ is a ts-perversity with $\vec p_2(Z)=\primeset{P}(R)$ for all singular strata $Z$. Then by Example \ref{E: CS} $\ms P_{\vec p,\mc E}=\mc P_{\vec p_1+1,\mc E}$, where $\vec p_1+1$ is the $\Z$-valued perversity such that $(\vec p_1+1)(Z)=\vec p_1(Z)+1$ for all singular $Z$. Also $\ms P_{D\vec p,\mc D\mc E[-n]}=\mc P_{\bar q,\mc D\mc E[-n]}$, where $\bar q$ is the $\Z$-valued perversity such that $(\vec p_1+1)(Z)+\bar q(Z)=\vec p_1(Z)+1+\bar q(Z)=\codim(Z)-1$. 
With these assumptions Theorem \ref{T: duality} reduces to the Superduality Theorem of Cappell and Shaneson \cite{CS91}.  Note that in order to have $\ms P_{\vec p,\mc E}=\mc P_{\vec p_1+1,\mc E}$ it is in fact sufficient to require only $I^{\bar p}H_{k-2-\bar p(Z)}(L_x;\mc E)$ to be torsion. 
\end{remark}

\begin{corollary}\label{T: PD}
Let $X$ be a n-dimensional stratified pseudomanifold, and let $\mc E$ be a ts-coefficient system on $U_1$ over a principal ideal domain $R$. Let $T\H^*$ and $F\H^*$ denote, respectively, the $R$-torsion submodule and $R$-torsion-free quotient module of $\H^*$, and let $Q(R)$ denote the field of fractions of $R$.

Suppose  $\text{\emph{Ext}}\left( \H_c^{n-i+1}\left(X;\ms P_{\vec p,\mc E}\right),R\right)$ is a torsion $R$-module (for example, if $\H_c^{n-i+1}\left(X;\ms P_{\vec p,\mc E}\right)$ is finitely generated).
Then 
\begin{align*}
	F\H^{i}\left(X;\ms P_{D\vec p,\mc D\mc E[-n]}\right)&\cong \text{\emph{Hom}}\left(\H_c^{n-i}\left(X;\ms P_{\vec p,\mc E}\right),R\right)\cong \text{\emph{Hom}}\left(F\H_c^{n-i}\left(X;\ms P_{\vec p,\mc E}\right),R\right)\\
	T\H^{i}\left(X;\ms P_{D\vec p,\mc D\mc E[-n]}\right)& \cong \text{\emph{Ext}}\left(\H_c^{n-i+1}\left(X;\ms P_{\vec p,\mc E}\right),R\right)\cong \text{\emph{Hom}}\left(T\H_c^{n-i+1}\left(X;\ms P_{\vec p,\mc E}\right),Q\left(R\right)/R\right).
\end{align*}

In particular, if $X$ is compact and orientable and $\mc E=R_{U_1}$ is the constant sheaf with stalk $R$ in degree $0$ then 
\begin{align*}
F\H^{i}\left(X;\ms P_{D\vec p,R_{U_1}}\right)&\cong \text{\emph{Hom}}\left(F\H^{n-i}\left(X;\ms P_{\vec p,R_{U_1}}\right),R\right)\\
 T\H^{i}\left(X;\ms P_{D\vec p,R_{U_1}}\right)&\cong \text{\emph{Hom}}\left(T\H^{n-i+1}\left(X;\ms P_{\vec p,R_{U_1}}\right),Q\left(R\right)/R\right).
 \end{align*} 
\end{corollary}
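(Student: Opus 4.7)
The plan is to combine the duality quasi-isomorphism of Theorem \ref{T: duality} with the standard universal coefficient theorem of Verdier type over a PID. Concretely, the first step is to use Theorem \ref{T: duality} together with the indexing convention $\mc D\mc S^*[-n]=(\mc D\mc S^*)[-n]$ to rewrite
$$\H^i\bigl(X;\ms P^*_{D\vec p,\mc D_{U_1}\mc E^*[-n]}\bigr)\cong \H^{i-n}\bigl(X;\mc D_X\ms P^*_{\vec p,\mc E^*}\bigr).$$
Since $\ms P^*_{\vec p,\mc E^*}$ is cohomologically constructible by Theorem \ref{T: cc}, I would then invoke the PID version of the Verdier duality universal coefficient sequence (as in \cite[V.7.7]{Bo}) to obtain the short exact sequence
$$0\to \Ext\bigl(\H_c^{n-i+1}(X;\ms P^*_{\vec p,\mc E^*}),R\bigr)\to \H^i\bigl(X;\ms P^*_{D\vec p,\mc D_{U_1}\mc E^*[-n]}\bigr)\to \Hom\bigl(\H_c^{n-i}(X;\ms P^*_{\vec p,\mc E^*}),R\bigr)\to 0.$$

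Next, the algebraic computations recalled in Section \ref{S: algebra} complete the identification. Over a PID, $\Hom(-,R)$ always produces a torsion-free module, whereas by hypothesis the leftmost term is torsion. Therefore the displayed sequence is automatically the canonical decomposition of $\H^i$ into its torsion submodule and torsion-free quotient, yielding $F\H^i\cong \Hom(\H_c^{n-i},R)$ and $T\H^i\cong \Ext(\H_c^{n-i+1},R)$. The identity $\Hom(M,R)\cong \Hom(FM,R)$ follows because $\Hom(TM,R)=0$ for any torsion $R$-module $TM$. For the $Q(R)/R$ reformulation, I would apply $\Hom(T\H_c^{n-i+1},-)$ to the short exact sequence $0\to R\to Q(R)\to Q(R)/R\to 0$; since $Q(R)$ is divisible (hence injective over the PID $R$) and $T\H_c^{n-i+1}$ is torsion, both $\Hom$ and $\Ext$ into $Q(R)$ vanish, collapsing the six-term sequence to $\Ext(T\H_c^{n-i+1},R)\cong \Hom(T\H_c^{n-i+1},Q(R)/R)$.

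For the compact orientable case, orientability gives a quasi-isomorphism $\mc D_{U_1}R_{U_1}[-n]\sim_{qi} R_{U_1}$, from which the axiomatic uniqueness statement of Theorem \ref{T: axioms} yields $\ms P^*_{D\vec p,\mc D_{U_1}R_{U_1}[-n]}\sim_{qi}\ms P^*_{D\vec p,R_{U_1}}$. Compactness replaces $\H_c^*$ with $\H^*$, and compactness together with constructibility forces finite generation of every hypercohomology module, so the torsion condition on the $\Ext$ term is automatic and the general statement specializes as claimed.

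The only real subtlety I anticipate is verifying that the precise hypotheses of the Verdier universal coefficient theorem in \cite{Bo} are met, which reduces to the $\mf X$-cc and cc properties established in Theorem \ref{T: cc}, together with careful bookkeeping of the index shifts arising from $[-n]$ and $\mc D_X$. The splitting of the $\Hom$-$\Ext$ sequence according to torsion, the passage from $\Hom(M,R)$ to $\Hom(FM,R)$, and the divisibility argument producing the $Q(R)/R$ formula are all routine PID homological algebra.
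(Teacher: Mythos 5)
Your proposal is correct and follows essentially the same route as the paper, which proves the corollary by invoking Theorem \ref{T: duality}, the universal coefficient short exact sequence for Verdier duality (\cite[Theorem 3.4.4]{BaIH}), and basic PID homological algebra; you have simply unpacked those citations into explicit steps. All of the details you fill in are sound: the index bookkeeping for the shift $[-n]$, the observation that a short exact sequence $0\to A\to B\to C\to 0$ with $A$ torsion and $C$ torsion-free identifies $A$ with $TB$ and $C$ with $FB$, the vanishing $\Hom(TM,R)=0$, the injectivity of $Q(R)$ over a PID giving $\Ext(T,R)\cong\Hom(T,Q(R)/R)$, and the specialization to the compact orientable constant-coefficient case via $\mc D_{U_1}R_{U_1}[-n]\cong R_{U_1}$. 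The only cosmetic difference is the citation for the Verdier UCT (\cite[Section V.7.7]{Bo} versus \cite[Theorem 3.4.4]{BaIH}), which produce the same exact sequence.
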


\begin{proof}
These statements follow from Theorem \ref{T: duality}, using the universal coefficient sequence for Verdier duality \cite[Theorem 3.4.4]{BaIH} and basic homological algebra \cite[Section 8.4.1]{GBF35}. 
\end{proof}

Next we show that $\mc D\mc E[-n]$ is also a ts-coefficient system. 
The following basic fact about $\mc D$ will be useful both here and in several places below: If $f_x:x\into X$ is the inclusion and $\mc S$ is in the bounded constructible derived category $D_{\mf X}^b(X)$ then
\begin{align}
H^i((\mc D\mc S[-n])_x)&\cong H^{i-n}(f_x^*\mc D\mc S)
\cong H^{i-n}(\mc D(f^!_x\mc S))\notag\\
&\cong \Hom(H^{n-i}(f^!_x\mc S),R)\oplus \Ext(H^{n-i+1}(f^!_x\mc S),R).\label{E: dual stalk}
\end{align}
The second isomorphism is due to \cite[Theorem V.10.17]{Bo}, and for the last we use the universal coefficient theorem for Verdier duality \cite[Theorem 3.4.4]{BaIH} and that $\Hom(H^{n-i}(f^!_x\mc S),R)$ is free, as $H^{n-i}(f^!_x\mc S)$ is finitely generated because $\mc S$ is constructible by assumption (see \cite[Section V.3.3.iii]{Bo}).

\begin{proposition}\label{P: dual coeff}
Suppose $\mc E$ is a $\wp$-coefficient system on an $n$-manifold $M$, and let $D\wp=\primeset{P}(R)-\wp$, the complementary set of primes. Then $\mc D\mc E[-n]$ is a $D\wp$-coefficient system. Furthermore, 
 \begin{equation*}
 \mc H^i(\mc D\mc E[-n])_x\cong
 \begin{cases}
TH^0(\mc E_x),&i=1,\\
R^{\text{rank}(H^0(\mc E_x))}\oplus TH^1(\mc E_x), &i=0.
\end{cases}
\end{equation*}
\end{proposition}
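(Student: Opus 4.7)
The plan is to reduce the computation to a stalkwise universal-coefficient application, by exploiting the short length of $\mc E^*$. On a topological $n$-manifold the dualizing complex $\omega_M$ is, up to an orientation twist, isomorphic to $R_M[n]$; the twist is by a locally constant sheaf of stalks $R$ and so affects neither local constancy nor stalk computations. Consequently
$$\mc D\mc E^*[-n] \cong R\mc{H}om(\mc E^*, \omega_M)[-n] \cong R\mc{H}om(\mc E^*, \omega_M[-n]),$$
with $\omega_M[-n]$ a locally constant sheaf concentrated in degree $0$.

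Next I would invoke the truncation distinguished triangle
$$\mc H^0(\mc E^*) \lra \mc E^* \lra \mc H^1(\mc E^*)[-1] \xr{+1},$$
which exists because by hypothesis $\mc H^i(\mc E^*)=0$ for $i\neq 0,1$. Applying the contravariant $R\mc{H}om(-,\omega_M[-n])$ produces the triangle
$$R\mc{H}om(\mc H^1(\mc E^*),\omega_M[-n])[1]\lra R\mc{H}om(\mc E^*,\omega_M[-n])\lra R\mc{H}om(\mc H^0(\mc E^*),\omega_M[-n])\xr{+1}.$$
Passing to stalks at $x$, using that $\omega_M[-n]$ locally looks like $R$ and that for a finitely generated $R$-module $A$ over a PID we have $\Hom(A,R)\cong R^{\mathrm{rank}(A)}$, $\Ext^1(A,R)\cong TA$, and $\Ext^i(A,R)=0$ for $i\geq 2$ (both recalled in Section \ref{S: algebra}), the long exact sequence of the triangle collapses. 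In the crucial degrees it reduces to a short exact sequence
$$0\lra \mc H^1(\mc E^*)_x \lra \mc H^0\bigl(\mc D\mc E^*[-n]\bigr)_x\lra R^{\mathrm{rank}(\mc H^0(\mc E^*)_x)}\lra 0,$$
which splits because the right-hand term is free, together with an isomorphism $\mc H^1(\mc D\mc E^*[-n])_x\cong T\mc H^0(\mc E^*)_x$ and vanishing in all other degrees. Since $\mc H^1(\mc E^*)_x$ is already torsion, it coincides with $T\mc H^1(\mc E^*)_x$, matching the stated formula.

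Finally I would verify the ts-coefficient system structure. Local constancy and finite generation of $\mc H^0(\mc D\mc E^*[-n])$ and $\mc H^1(\mc D\mc E^*[-n])$ follow from the computation and the assumed properties of $\mc E^*$. For the torsion conditions: $\mc H^0(\mc E^*)_x$ is $\wp$-torsion-free by assumption, so its $R$-torsion is concentrated among primes of $D\wp$, making $T\mc H^0(\mc E^*)_x$ a $D\wp$-torsion module; meanwhile $\mc H^1(\mc E^*)_x$ is $\wp$-torsion and hence $D\wp$-torsion-free since $\wp\cap D\wp=\emptyset$, and $R^{\mathrm{rank}}$ is free, so the direct sum is $D\wp$-torsion-free. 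The only real difficulty in this argument is bookkeeping—tracking shifts correctly through the contravariant $R\mc{H}om$ and selecting the right degrees from the long exact sequence—after which the splitting is automatic and the identification of the torsion/free pieces is immediate from the universal coefficient identifications over a PID.
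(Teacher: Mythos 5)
Your proof is correct and lands on exactly the same algebra as the paper, but it gets there by a different route. The paper computes $\mc H^i(\mc D\mc E^*)_x$ directly from Borel's spectral sequence $E_2^{p,q}=\Ext^p(H^{-q}(f_x^!\mc E^*),R)$ together with the identification $H^j(f_x^!\mc E^*)\cong H^{j-n}(\mc E^*_x)$ on a manifold; the three surviving $E_2$ terms give the isomorphism $\mc H^{-n+1}(\mc D\mc E^*)_x\cong\Ext(H^0(\mc E^*_x),R)$ and the short exact sequence $0\to\Ext(H^1(\mc E^*_x),R)\to\mc H^{-n}(\mc D\mc E^*)_x\to\Hom(H^0(\mc E^*_x),R)\to 0$, split by freeness of the last term. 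You instead dualize the canonical truncation triangle $\mc H^0(\mc E^*)\to\mc E^*\to\mc H^1(\mc E^*)[-1]\xr{+1}$ and read off the same isomorphism and the same split short exact sequence from the resulting long exact sequence on stalks; since the spectral sequence of a two-step Postnikov filtration is exactly that long exact sequence, the two arguments are equivalent in content, and your degree bookkeeping checks out (in particular $\Ext^2=0$ over a PID kills the potentially obstructing term). The one step you should make explicit is the assertion that the stalk of the sheaf-level derived Hom into $\omega_M[-n]$ computes $\Ext^*(\mc A_x,R)$ for a locally constant sheaf $\mc A$ of finitely generated modules: derived sheaf Hom does not commute with stalks in general, and this is precisely the point the paper avoids by invoking Borel's stalkwise formula for $\mc D$. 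It is true here (locally both sheaves are constant with finitely presented stalks), but it needs a sentence of justification; your handling of the orientation twist and the final $D\wp$-torsion bookkeeping are fine and match the paper's.
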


\begin{proof}
As $\mc E$ is $\mf X$-cc by definition,  the complex $\mc D\mc E$ is also $\mf X-cc$ by \cite[Corollary V.8.7]{Bo} and so each $\mc H^i(\mc D\mc E)$ is finitely generated and locally constant. Now let $\ell_x:x\into U_1$. Then $ \mc H^i(\mc D\mc E[-n])_x\cong \Hom(H^{-i}(\mc E_x),R)\oplus \Ext(H^{-i+1}(\mc E_x),R)$
by \eqref{E: dual stalk} and using that $f_x^!\cong f_x^*[-n]$ on a manifold \cite[Proposition V.3.7.b]{Bo}. 
 The result follows by our observations about $\Hom$ and $\Ext$ in Section \ref{S: algebra}.
\end{proof}

As a ts-coefficient system on $X$ is simply a sheaf complex on $M$ that restricts to a $\wp$-coefficient system on each connected component of $M$ (for some $\wp$ that might vary by stratum), the following corollary is immediate:

\begin{corollary}\label{C: dual coeff}
If $\mc E$ is a ts-coefficient system then  $\mc D\mc E[-n]$ is a ts-coefficient system.
\end{corollary}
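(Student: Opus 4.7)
The plan is to reduce directly to Proposition \ref{P: dual coeff} by working component-by-component, exploiting the local nature of both Verdier duality and the defining conditions on ts-coefficient systems.

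First, I would write $M = \amalg_j M_j$ for the decomposition into connected components on which $\mc E^*$ is a $\wp(M_j, \mc E^*)$-coefficient system. Since each $M_j$ is open in $M$, the Verdier dualizing functor commutes with restriction to $M_j$: if $\iota_j \colon M_j \hookrightarrow M$ is the open inclusion, then $\iota_j^* \mc D_M \mc E^* \cong \iota_j^! \mc D_M \mc E^* \cong \mc D_{M_j}(\iota_j^* \mc E^*)$. Hence $(\mc D_M \mc E^*[-n])|_{M_j} \sim_{qi} \mc D_{M_j}(\mc E^*|_{M_j})[-n]$.

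Next, I would apply Proposition \ref{P: dual coeff} to each $M_j$ separately. Since $\mc E^*|_{M_j}$ is a $\wp(M_j, \mc E^*)$-coefficient system on the manifold $M_j$, the proposition gives that $\mc D_{M_j}(\mc E^*|_{M_j})[-n]$ is a $D\wp(M_j, \mc E^*)$-coefficient system, where $D\wp(M_j,\mc E^*) = \primeset{P}(R) \setminus \wp(M_j, \mc E^*)$. Combined with the previous paragraph, this shows that $(\mc D_M \mc E^*[-n])|_{M_j}$ is a $D\wp(M_j, \mc E^*)$-coefficient system for each $j$.

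Finally, referring back to Definition \ref{D: ts-coeff}, a sheaf complex on $M = \amalg_j M_j$ which restricts to a $\wp$-coefficient system on each component (with $\wp$ allowed to vary by component) is by definition a ts-coefficient system. Thus $\mc D\mc E^*[-n]$ is a ts-coefficient system, with $\wp(M_j, \mc D\mc E^*[-n]) = D\wp(M_j, \mc E^*)$. There is essentially no obstacle here: the only nontrivial input is the commutativity of Verdier duality with open restriction, which is standard, and the corollary reduces cleanly to the manifold case already handled.
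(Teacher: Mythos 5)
Your proposal is correct and takes essentially the same route as the paper, which simply declares the corollary ``immediate'' from Proposition~\ref{P: dual coeff} and the component-by-component definition of a ts-coefficient system; you have merely written out the (standard) commutation of Verdier duality with open restriction that the paper leaves implicit.
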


\begin{proof}[Proof of Theorem \ref{T: duality}]
The preceding corollary shows that if $\mc E$ is a ts-coefficient system then $\mc D\mc E[-n]$ is a ts-coefficient system.
Thus, as in \cite{GM2,Bo}, it suffices to verify that $\mc D \ms P_{\vec p,\mc E}[-n]$ satisfies the axioms for $\ms P_{D\vec p,\mc D\mc E[-n]}$. However, we do not have available the reformulation into a version of the Goresky-MacPherson axioms Ax2, so our proof will have to proceed a bit differently from those in \cite{GM2, Bo}; instead we emulate the proof of  \cite[Theorem 3.2]{CS91} and utilize the axioms TAx1'.

\emph{Constructibility.} By \cite[Corollary V.8.7]{Bo}, $\mc D \ms P_{\vec p,\mc E}$ is $\mf X$-clc and $\mf X$-cc because $\ms P_{\vec p,\mc E}$ is by Theorem \ref{T: cc}.

\emph{Axiom TAx1'\eqref{A: coeffs}.}
Let $i:U_1\into X$ be the inclusion. Since $U_1$ is open in $X$, $i^!=i^*$, and thus if $\mf D_X$ is the Verdier dualizing sheaf on $X$, $i^*\mf D_X=i^!\mf D_X=\mf D_{U_1}$. Now for any sheaf complex $\mc S$ we have $\mc D\mc S\cong R\text{\emph{Hom}}(\mc S,\mf D)\cong \text{\emph{Hom}}(\mc S,\mf D)$, since $\mf D$ is injective in Borel's construction \cite[Corollary V.7.6]{Bo}. Furthermore, it is clear from the construction of the sheaf functor \emph{Hom} that $\text{\emph{Hom}}(\mc S,\mf D)|_{U_1}\cong \text{\emph{Hom}}(\mc S|_{U_1},\mf D_{U_1})\cong \mc D(\mc S|_{U_1})$. Thus since $\ms P_{\vec p,\mc E}|_{U_1}\cong \mc E$, it follows that $(\mc D \ms P_{\vec p,\mc E}[-n])|_{U_1}\cong\mc D\mc E[-n]$. This demonstrates axiom TAx1'\eqref{A: coeffs}.

\emph{Axiom TAx1'\eqref{A': truncate}.}
Next, let $x\in Z\subset X_{n-k}$, $k>0$. Let $f_x:x\into X$  be the inclusion map, and let us abbreviate 
$\ms P_{\vec p,\mc E}$ as simply $\ms P$. 
Then
$H^i((\mc D\ms P[-n])_x)\cong \Hom(H^{n-i}(f^!_x\ms P),R)\oplus \Ext(H^{n-i+1}(f^!_x\ms P),R)$  by \eqref{E: dual stalk}.
Since $\ms P$ satisfies TAx1'$(X,\vec p,\mc E)$,
we know
$H^i(f_x^!\ms P)=0$ for $i\leq p_1(Z)+n-k+1$ and $H^{p_1(Z)+n-k+2}(f_x^!\ms P)$ is $\vec p_2(Z)$-torsion free. 
Thus  $H^i((\mc D\ms P[-n])_x)=0$ for $n-i+1\leq p_1(Z)+n-k+1$, i.e. for $i\geq k-p_1(Z)=D\vec p_1(Z)+2$. Furthermore, 
\begin{align*}
H^{D\vec p_1(Z)+1}((\mc D\ms P[-n])_x)&\cong  \Hom(H^{n-D\vec p_1(Z)-1}(f^!_x\ms P),R)\oplus \Ext(H^{n-\vec Dp_1(Z)}(f^!_x\ms P),R)\\
&=\Hom(H^{\vec p_1(Z)+n-k+1}(f^!_x\ms P),R)\oplus \Ext(H^{\vec p_1(Z)+n-k+2}(f^!_x\ms P),R)\\
&=\Ext(H^{\vec p_1(Z)+n-k+2}(f^!_x\ms P),R)
\end{align*}
Since $H^{\vec p_1(Z)+n-k+2}(f^!_x\ms P)$ is finitely generated, again by the constructibility of $\ms P$, and since it has no $\vec p_2(Z)$-torsion,
$H^{D\vec p_1(Z)+1}((\mc D\ms P[-n])_x)$ must then consist entirely of $D\vec p_2(Z)$-torsion. 

 This demonstrates TAx1'\eqref{A': truncate}.

\emph{Axiom TAx1'\eqref{A': attach}.}
Next, by \cite[Proposition V.8.2]{Bo} and \cite[Theorem 3.4.4]{BaIH} we have 
\begin{equation*}
H^i(f_x^!\mc D\ms P[-n])\cong H^{i-n}(f_x^!\mc D\ms P)
\cong H^{i-n}(\mc D \ms P_x) \cong \Hom(H^{n-i}(\ms P_x),R)\oplus \Ext(H^{n-i+1}(\ms P_x),R).
\end{equation*}
Since $\ms P$ satisfies TAx1'$(X,\vec p,\mc E)$, we know that 
$H^i(\ms P_x)=0$ for $i>\vec p_1(Z)+1$ and $H^{\vec p_1(Z)+1}(\ms P_x)$ is $\vec p_2(Z)$-torsion. 
This immediately implies  $H^i(f_x^!\mc D\ms P[-n])=0$ if $n-i>p_1(Z)+1$, i.e. if $i\leq n-\vec p_1(Z)-2=D\vec p_1(Z)+n-k$. Furthermore, if $i=D\vec p_1(Z)+n-k+1$, then $n-i=\vec p_1(Z)+1$, and we still have $n-i+1>p_1(Z)+1$, so  $H^{D\vec p_1(Z)+n-k+1}(f_x^!\mc D\ms P[-n])\cong \Hom(H^{\vec p_1(Z)+1}(\ms P_x),R)$. But $H^{\vec p_1(Z)+1}(\ms P_x)$ is torsion by the axioms, so $H^{D\vec p_1(Z)+n-k+1}(f_x^!\mc D\ms P[-n])$ also vanishes.

It remains to show that 
$H^{D\vec p_1(Z)+n-k+2}(f_x^!\mc D\ms P[-n])$ is $D\vec p_2(Z)$-torsion free. 
From our formula above,
$$H^{D\vec p_1(Z)+n-k+2}(f_x^!\mc D\ms P[-n])\cong \Hom(H^{-D\vec p_1(Z)+k-2}(\ms P_x),R)\oplus \Ext(H^{-D\vec p_1(Z)+k-1}(\ms P_x),R).$$
As all modules are finitely generated from the constructibility assumptions, the torsion subgroup will be the $\Ext$ summand. 
The module $H^{-D\vec p_1(Z)+k-1}(\ms P_x)=H^{\vec p_1(Z)+1}(\ms P_x)$ is $\vec p_2(Z)$-torsion by the axioms for $\ms P_x$, so all the torsion of $H^{D\vec p_1(Z)+n-k+2}(f_x^!\mc D\ms P[-n])$ is $\vec p_2(Z)$-torsion. As $\vec p_2(Z)$ and $D\vec p_2(Z)$ are complementary sets of prime, this shows that $H^{D\vec p_1(Z)+n-k+2}(f_x^!\mc D\ms P[-n])$ is $D\vec p_2(Z)$-torsion free.

This verifies Axiom TAx1'\eqref{A': attach}.

\emph{Axiom TAx1'\eqref{A: bounded}.}
It follows from $\mc D\mc E[-n]$ being a $ts$-coefficient system and from TAx1'\eqref{A': truncate}, which we have already proven, that $\mc D\ms P[-n]$ is bounded above (up to quasi-isomorphism). 
We need to demonstrate 
that $H^i((\mc D\ms P[-n])_x)=0$ for $i<0$, and hence complete axiom TAx1'\eqref{A: bounded}. 
We have 
$$
H^i((\mc D\ms P[-n])_x)\cong \dlim_{x\in U} \H^{i-n}(U;\mc D\ms P)
\cong  \dlim_{x\in U}  \Hom(\H^{n-i}_c(U;\ms P),R)\oplus \Ext(\H^{n-i+1}_c(U;\ms P),R).
$$
So  it suffices to show that for any neighborhood $U$ of $x$ we have $\H^{j}_c(U;\ms P)=0$ for $j>n+1$ and $\H^{n+1}_c(U;\ms P)=0$ is torsion. But this follows from Theorem \ref{T: vanishing}, taking $\wp$ there to be the set of all primes, as $\ms P$ satisfies the axioms TAx1. 
\end{proof}

\subsection{Self-duality}\label{S: self dual}

By Theorem \ref{T: duality}, we know that $\ms P_{\vec p,\mc E}$ is dual to $\ms P_{D\vec p,\mc (\mc D\mc E)[-n]}$, i.e.\ that $\mc D\ms P_{\vec p,\mc E}[-n]\cong \ms P_{D\vec p,(\mc D\mc E)[-n]}$ in the derived category $D^b(X)$. The next natural question is when we have self-duality, i.e.\ that $\mc D\ms P_{\vec p,\mc E}[-n]\cong\ms P_{\vec p,\mc E}$, possibly up to further degree shifts. Such situations lead to further invariants such as signatures. Applying Theorem \ref{T: duality}, such self-duality occurs when $\ms P_{\vec p,\mc E}$ is quasi-isomorphic to $\ms P_{D\vec p,\mc (\mc D\mc E)[-n]}$, up to shifts. 

For the classical Deligne sheaves $\mc P_{\bar p,\mc E}$, with $\bar p$ a perversity in the standard sense and $\mc E$ a local system, it is well known that such self-duality can always be achieved, say for constant coefficients on orientable pseudomanifolds, by imposing strong enough conditions on the space $X$. For example, if $X$ is a trivially-stratified manifold, then  $\mc P_{\bar p,\mc E}$ is independent of $\bar p$, and so all Deligne sheaves with the same coefficient systems are isomorphic. Hence the usual focus is on finding the minimal conditions on a space that will ensure self-duality for some perversity. In this setting, we have $D\bar p(Z)=\codim(Z)-\bar p(Z)-2$, so $D\bar p=\bar p$ implies $\bar p(Z)=\frac{\codim(Z)-2}{2}$. Of course this is not possible if $X$ has strata of odd codimension, as perversities take integer values, so one looks instead at the next most general case, the dual lower- and upper-middle perversities\footnote{There is no reason we are forced to make a consistent rounding choice at each odd codimension stratum, but it is convenient as there are canonical maps $\mc P_{\bar p,\mc E}\to \mc P_{\bar q,\mc E}$ whenever $\bar p(Z)\leq \bar q(Z)$ for all $Z$.} defined by $\bar m(Z)=\left\lfloor\frac{\codim(Z)-2}{2}\right\rfloor$ and $\bar n(Z)=\left\lceil\frac{\codim(Z)-2}{2}\right\rceil$, and asks for conditions for which  $\mc P_{\bar m,\mc E}\cong \mc P_{\bar n,\mc E}$. So let us see what we can do along these lines.

In the torsion-sensitive setting, we first observe how $\mc D$ behaves on the ts-coefficient systems $\mc E$. Recall that at each $x$ we have by definition that $H^i(\mc E_x)=0$ unless $i=0,1$, and in these cases
$H^1(\mc E_x)$ is $\wp$-torsion for some $\wp\in \primeset{P}(R)$ while $H^0(\mc E_x)$ is $\wp$-torsion free.
By Proposition \ref{P: dual coeff}, taking $\mc E$ to $(\mc D\mc E)[-n]$ results (cohomologically) in interchanging the degrees of the torsion subgroups. In other words, we saw $\mc H^0(\mc D\mc E[-n])_x\cong R^{\text{rank}(H^0(\mc E_x))}\oplus TH^1(\mc E_x)$ and $\mc H^1(\mc D\mc E[-n])_x\cong TH^0(\mc E_x)$,
and we know that $TH^0(\mc E_x)$ and $TH^1(\mc E_x)$ cannot have torsion with respect to the same primes.
Thus it is not possible for $\mc E$ to be isomorphic  to $\mc D\mc E$ (up to shifts) unless either \begin{enumerate}
\item $H^*(\mc E_x)$ is torsion-free for all $x$, or 
\item $H^*(\mc E_x)$ is nontrivial in only one degree (either $0$ or $1$), where it must be a torsion module.
\end{enumerate}
And the latter case requires an additional degree shift.

In what follows, we will consider each of these cases individually. First, however, as we will be writing conditions in terms of links, it will be useful to make the following observation, again generalizing a known result for the usual Deligne sheaves. The following lemma says that the restriction of a ts-Deligne sheaf to a link is a ts-Deligne sheaf of the link. In the statement of the lemma, we let $\vec p$ stand also for its own restriction to $L$. In other words, if $\mc Z$ is a codimension $j$ stratum of $L$ then $\mc Z$ is contained in a codimensions $j$ stratum $Z$ of $X$ and we set $\vec p(\mc Z)=\vec p(Z)$. We also write $\mc E|_L$ rather than the more correct $\mc E|_{L-L^{k-2}}$.

\begin{lemma}\label{L: link sheaf}
Let $X$ be a stratified pseudomanifold, $\vec p$ a ts-perversity, and $\mc E$ a ts-coefficient system. Suppose $x\in X_{n-k}$ and $L=L^{k-1}$ is a link of $X$ at $x$ that we can assume embedded in $X$ via some distinguished neighborhood $V\cong \R^{n-k}\times cL$ of $x$ and given the induced filtration. Then $\ms P_{X,\vec p,\mc E}|_L\cong\ms P_{L,\vec p,\mc E|_{L}}$.
\end{lemma}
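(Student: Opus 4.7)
The strategy is to invoke Theorem \ref{T: axioms} by verifying that the restricted complex $\ms P^*_{X,\vec p,\mc E^*}|_L$ satisfies the axioms TAx1$(L, \vec p, \mc E^*|_L)$. The induced filtration on $L$ satisfies $L^{k-1-j} = L \cap X^{n-j}$, which follows from the distinguished-neighborhood identification $V \cong \R^{n-k} \times cL$ and its compatibility with the filtration (i.e.\ $V \cap X^{n-j} = \R^{n-k} \times c(L^{k-1-j})$). Consequently each stratum $\mc Z \subset L_{k-1-j}$ of $L$ lies in a stratum $Z \subset X_{n-j}$ of $X$ of the same codimension $j$, and by the convention of the lemma $\vec p(\mc Z) = \vec p(Z)$.

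The first three axioms will reduce almost immediately to the corresponding properties of $\ms P^*_X$. Boundedness (axiom (\ref{A: bounded})) is preserved under restriction. For the coefficient axiom (\ref{A: coeffs}), the regular stratum $L - L^{k-2}$ lies in $U_1 = X - X^{n-1}$, so $(\ms P^*_X|_L)|_{L - L^{k-2}} \sim_{qi} \mc E^*|_{L - L^{k-2}}$. Axiom (\ref{A: truncate}) follows because $(\ms P^*_X|_L)_y = (\ms P^*_X)_y$ for every $y \in L$, and the truncation/torsion conditions match under our alignment $\vec p(\mc Z) = \vec p(Z)$.

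The main obstacle will be axiom (\ref{A: attach}), since the attaching map on $L$ is formed from the inclusion $j_j: L \setminus L^{k-1-j} \hookrightarrow L \setminus L^{k-2-j}$, whereas the attaching map we already understand on $X$ uses $i_j: U_j \hookrightarrow U_{j+1}$. To relate them I will compute the stalk $(Rj_{j*}((\ms P^*_X|_L)|_{L \setminus L^{k-1-j}}))_y$ at $y \in \mc Z \subset L_{k-1-j}$ using a small distinguished neighborhood of $y$ in $L$; by constructibility of $\ms P^*_X$ (Theorem \ref{T: cc}) together with \cite[Lemma V.3.9]{Bo}, this stalk becomes the hypercohomology $\H^*(L_y^L; \ms P^*_X|_{L_y^L})$ of the $(j-1)$-dimensional link $L_y^L$ of $y$ in $L$. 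The analogous computation on $X$ yields $\H^*(L_y^X; \ms P^*_X|_{L_y^X})$ for the link $L_y^X$ of $y$ in $X$. The crux is then to identify these two hypercohomology groups: if one chooses a small distinguished neighborhood $W_y$ of $y$ in $X$ contained in $V$, then $W_y \cap L$ is a distinguished neighborhood of $y$ in $L$, and its ``link component'' is simultaneously homeomorphic as a stratified pseudomanifold to $L_y^L$ and to $L_y^X$. This stratified identification identifies the two restrictions of $\ms P^*_X$, and combining with axiom (\ref{A: attach}) for $\ms P^*_X$ at $y$ yields the required attaching-map condition on $L$.

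A complementary route, if the link identification proves awkward to set up cleanly, is to induct on the depth of $L$: assuming the lemma for stratified pseudomanifolds of smaller depth, one checks that the iterated torsion-tipped truncation $\mf t_{\leq \vec p}^{L_{k-1-j}} Rj_{j*}$ commutes with restriction to $L$ in the appropriate sense, using the inductive hypothesis on the links that arise. Either way, the core technical content is the link identification just described, and once axiom (\ref{A: attach}) is verified, Theorem \ref{T: axioms} delivers the quasi-isomorphism $\ms P^*_{X,\vec p,\mc E^*}|_L \sim_{qi} \ms P^*_{L,\vec p,\mc E^*|_L}$.
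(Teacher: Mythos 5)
Your proof is correct, but it takes a genuinely different route from the paper's. The paper verifies the axioms TAx1$'$ rather than TAx1, reducing the only nontrivial condition (the cosupport axiom involving $g_z^!$) to the corresponding condition for $\ms P^*$ on $V-X_{n-k}\cong\R^{n-k+1}\times L$ via the shift identity $r^*\ms P^*\cong r^!\ms P^*[n-k+1]$; establishing that identity is the content of Lemma \ref{L: nns}, which the paper proves from the Kashiwara--Schapira formulas. You instead verify TAx1 directly, treating the attachment axiom by computing the relevant pushforward stalk on both $L$ and $X$ as link hypercohomology $\H^*(L_y;\ms P^*_X|_{L_y})$ and observing that a link of $y$ in $L$ can be taken to be a link of $y$ in $X$ (since a distinguished neighborhood $\R^{k-1-j}\times cL_y^L$ of $y$ in $L$ yields a distinguished neighborhood $\R^{n-k+1}\times\R^{k-1-j}\times cL_y^L\cong\R^{n-j}\times cL_y^L$ of $y$ in $V-X_{n-k}$). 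Your approach is more geometric and avoids the need for Lemma \ref{L: nns} entirely, at the cost of having to spell out that the link identification is compatible with the two attachment maps (both realized in the limit as the restriction $H^*(\ms P^*_y)\to\H^*(L_y;\ms P^*|_{L_y})$); the paper's approach is slicker once $r^*\cong r^![n-k+1]$ is available, since it never needs to manipulate the pushforward or links directly. Both are sound.
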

\begin{proof}

Note that $\mc E|_{L}$ satisfies the same stalk conditions on $L^{k-2}$ as  $\mc E$ does on $X-X^{n-1}$, and so it is a ts-coefficient system on $L$.

Now let $\ms P=\ms P_{X,\vec p,\mc E}$.
To prove the lemma, it suffices by Theorems \ref{T: axioms} and \ref{T: ax equiv} to show that $\ms P|_L$ satisfies the axioms TAx1' on $L$ (Definition \ref{T: Ax1'}). As stalk cohomology and quasi-isomorphisms commute with restriction, the only condition that is not immediate is the cosupport condition. 

We identify $V-X_{n-k}$ with $\R^{n-k+1}\times L$ and identify $L$ with $\{u\}\times L$ for some $u\in\R^{n-k+1}$; let $r:L\into \R^{n-k+1}\times L$ be the embedding.
We first consider the restriction $\ms P|_{V-X_{n-k}}$. Since all of the axioms TAx1 are local, this is also a ts-Deligne sheaf on $V-X_{n-k}\cong \R^{n-k+1}\times L$ and so satisfies the axioms. We can thus work with $\ms P|_{V-X_{n-k}}$, which we relabel to $\ms P$ for convenience. Then we have
$\ms P|_L\cong r^*\ms P\cong r^!\ms P[n-k+1]$; see Lemma \ref{L: nns} below. 

Now let $z$ be a point in a stratum of $L$ of codimension $\ell$. The point $z$ also lives in a stratum of codimension $\ell$ of $V-X_{n-k}$. Let $f_z$ and $g_z$ be the inclusions of $z$ into $V-X_{n-k}$ and $L$, respectively, so that $rg_z=f_z$. Then
$$
H^i(g_z^!(\ms P|_L))\cong H^i(g_z^!r^*\ms P)
\cong H^i(g_z^!r^!\ms P[n-k+1])
\cong H^{i+n-k+1}(f_z^!\ms P).
$$
As $\ms P$ satisfies the axioms TAx1', we have $H^j(f_z^!\ms P)=0$ for $j\leq \vec p_1(Z)+n-\ell+1$ and is $\vec p_2(Z)$-torsion free for $j= \vec p_1(Z)+n-\ell+2$. So $H^i(g_z^!(\ms P|_L))=0$ for $i+n-k+1\leq \vec p_1(Z)+n-\ell+1$, i.e.\ for $i\leq \vec p_1(Z)+k-\ell=\vec p_1(Z)+(k-1)-\ell+1$, and it is $\vec p_2(Z)$-torsion free for $i= \vec p_1(Z)+(k-1)-\ell+2$. As $\dim(L)=k-1$, and $\vec p(\mc Z)=\vec p(Z)$ by definition, $\ms P|_L$ satisfies the cosupport property on $L$. 
\end{proof}

The property that $r^*$ and $r^!$ agree up to shifts for an embedding $r:L\into L\times \R^m$ seems to be well known, but the author could not find a proper citation. So here is an argument based on formulas in \cite{KS}:
\begin{lemma}\label{L: nns}
Let $X$ be a stratified pseudomanifold, and let  $E=X\times \R^m$  with filtration $\mf E$ given by the product filtration $E^j=X^{j-m}\times \R^m$. Let $\pi:E\to X$ be the projection and $r:X\into E$ the inclusion of the zero section. Let $\mc S\in D^+(E)$ be $\mf E$-clc on $E$. Then $r^!\mc S\cong r^*\mc S[-m]\in D^+(X)$. 
\end{lemma}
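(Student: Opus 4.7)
The plan is to combine two classical ingredients: the orientability of the trivial bundle $\pi: E\to X$, which yields $\pi^! \cong \pi^*[m]$, and the fact that $r$ is a section of $\pi$, which lets us transfer this shift to $r^!$. The $\mf E$-clc hypothesis enters only when extending from the pullback case $\mc S^* \cong \pi^*\mc T$ to general $\mc S^*$.

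For the first ingredient, I would establish $\pi^! R_X \cong R_E[m]$ (with $R_X$, $R_E$ the constant sheaves), either via a Kunneth-style identification or by reducing locally to the projection $\R^m \to \textup{pt}$, for which the dualizing complex is $R_{\R^m}[m]$ by orientability of $\R^m$ (see e.g.\ Borel V.7 or Kashiwara-Schapira Ch.~III). Extending from $R_X$ to an arbitrary $\mc T \in D^+(X)$ via the projection formula gives $\pi^! \mc T \cong \pi^* \mc T[m]$. Since $\pi r = \id_X$ and $(-)^!$ is functorial under composition, we then compute
$$\mc T = \id^! \mc T = r^! \pi^! \mc T \cong r^! \pi^* \mc T [m],$$
which rearranges to $r^! \pi^* \mc T \cong \mc T[-m] = r^* \pi^* \mc T[-m]$, proving the lemma whenever $\mc S^* \cong \pi^* \mc T$. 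In particular, taking $\mc T = R_X$ yields $r^! R_E \cong R_X[-m]$.

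The main obstacle is handling a general $\mf E$-clc sheaf $\mc S^*$, which need not be a global pullback from $X$. The cleanest resolution is a projection formula of the form
$$r^! \mc S^* \cong r^!(\mc S^* \otimes^L R_E) \cong r^* \mc S^* \otimes^L r^! R_E \cong r^* \mc S^* [-m],$$
whose crucial middle step uses the $\mf E$-clc hypothesis on $\mc S^*$ to supply the required constructibility/flatness. If a direct citation for the projection formula in precisely this generality is not at hand, an equivalent approach is a stalk computation via the open-closed distinguished triangle
$$r^! \mc S^* \to r^* \mc S^* \to r^* Rj_* j^* \mc S^* \to [1],$$
with $j:E\setminus r(X) \into E$ the open complement. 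For a distinguished neighborhood $V\cong U\times \R^m$ around a point of $r(X)$, the stalk of $r^* Rj_* j^* \mc S^*$ unwraps via $\R^m\setminus 0 \simeq S^{m-1}$ (using that $\mc S^*$ is locally constant along $\R^m$ within each stratum) to $r^* \mc S^*_x \otimes R\Gamma(S^{m-1};R) \cong r^* \mc S^*_x \oplus r^* \mc S^*_x[-(m-1)]$; rotating the triangle then identifies $(r^! \mc S^*)_x$ with $r^* \mc S^*_x[-m]$, giving the desired quasi-isomorphism on stalks and hence globally.
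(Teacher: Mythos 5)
Your overall strategy --- prove the statement for pullbacks $\pi^*\mc T$ using $\pi r=\id$ and the orientability of the trivial $\R^m$-bundle, then reduce the general case to this one --- is the right one, and your pullback computation $r^!\pi^*\mc T\cong r^*\pi^*\mc T[-m]$ matches the paper's argument (which runs it through the submersion formula \cite[Proposition 3.3.4.iii]{KS} applied to $\pi$ and to $\pi r=\id$, with the orientation sheaves computed explicitly). The gap is in the reduction step, which is precisely where the $\mf E$-clc hypothesis must do its work. The single fact you are missing is that an $\mf E$-clc complex on the product $E=X\times\R^m$ \emph{is} a pullback up to isomorphism in the derived category: $\mc S^*\cong\pi^*R\pi_*\mc S^*$. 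This is \cite[Lemma V.10.14.i]{Bo}, and it is exactly what the paper feeds into the submersion formula; with it, your first two paragraphs finish the proof immediately.

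Neither of your two substitutes closes this gap as written. The ``projection formula'' $r^!\mc S^*\cong r^*\mc S^*\otimes^{L}r^!R_E$ is not a formal identity for a closed inclusion $r$: there is a natural morphism $r^*\mc S^*\otimes^{L}r^!R_E\to r^!\mc S^*$, but its being an isomorphism is essentially equivalent to the lemma itself (it is the assertion that $r$ is normally nonsingular with respect to $\mc S^*$), so invoking it is circular unless you prove it --- and the proof again passes through the pullback description of $\mc S^*$. The stalk computation via the triangle $r^!\mc S^*\to r^*\mc S^*\to r^*Rj_*j^*\mc S^*\xrightarrow{+1}$ is fine as a computation of cohomology stalks, but ``isomorphic on stalks, hence globally'' is a non sequitur: two complexes with isomorphic cohomology sheaves need not be isomorphic in $D^+(X)$, and a quasi-isomorphism requires an actual morphism (or zig-zag), which you have not produced. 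To promote the stalk argument to a global one you would need a splitting $r^*Rj_*j^*\mc S^*\cong r^*\mc S^*\oplus r^*\mc S^*[-(m-1)]$ in $D^+(X)$ compatible with the attachment map, and producing that splitting again requires knowing that $\mc S^*$ restricted to $X\times(\R^m\setminus 0)$ is pulled back from $X$ --- i.e., the same Borel lemma. So: cite (or prove) $\mc S^*\cong\pi^*R\pi_*\mc S^*$ for $\mf E$-clc complexes, and your argument is complete.
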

\begin{proof}
The map $\pi$ is a topological submersion \cite[Definition 3.31]{KS} with fiber dimension $m$, and $\pi r=\id$ is a topological submersion \cite[Definition 3.31]{KS} with fiber dimension $0$. So by \cite[Proposition 3.3.4.iii]{KS}, taking the input sheaf to be $R\pi_*\mc S$, we have 
$$r^!\pi^*R\pi_*\mc S\cong r^*\pi^*R\pi_*\mc S\otimes or_{X/X}\otimes r^*or_{E/X}[-m],$$
where $or$ is the relative orientation sheaf \cite[Definition 3.3.3]{KS}. 
By \cite[Lemma V.10.14.i]{Bo}, we have $\pi^*R\pi_*\mc S\cong \mc S$. If $R_X$ is the constant sheaf on $X$ with stalk $R$, then by \cite[Equation 3.3.2]{KS} and \cite[Definition 3.1.16.i]{KS} we have  $or_{X/X}\cong \id^! R_X\cong \id^*R_X=R_X$ and $or_{E/X}[-m]\cong \pi^!R_X[-2m]$. 

Now, let $p:E\to R^m$ be the projection. Taking $F=R_{\R^m}$ and $G=R_X$ in \cite[Proposition 3.4.4]{KS}, and simplifying by using $R\operatorname{\emph{Hom}}(R_Y,\mc S)\cong \mc S$ on any space $Y$, gives us (cf.\ \cite[Lemma 1.13.11]{Sch00}) \[\pi^!R_X\cong \mf D_{\R^m}\overset{L}{\boxtimes} R_X=p^*\mf D_X\overset{L}{\otimes}\pi^*R_X,\]
where $\mf D$ denotes the dualizing complex (written $\omega$ in \cite{KS}).
 But $\pi^*R_X\cong R_E$, while  $\mf D_{\R^m}\cong or_{\R^m}[m]\cong  R_{\R^m}[m]$ by \cite[Equation 3.3.2 and Proposition 3.3.6]{KS}. So $\pi^!R_X[-2m]\cong (R_E[m]\overset{L}{\otimes}R_E)[-2m]\cong R_E[-m]$. Altogether then we get \[r^!\mc S\cong r^*\mc S\otimes R_X\otimes r^*R_E[-m]\cong r^*\mc S\otimes R_X\otimes R_X[-m]\cong r^*\mc S[-m].\qedhere\] 
\end{proof}

\subsubsection{Torsion-free coefficients}\label{S: TFC}

In this section we suppose $H^i(\mc E_x)$ is trivial unless $i=0$, in which case it is free and finitely generated. We can thus assume that $\mc E$ is in fact a local system of finitely-generated free modules concentrated in degree $0$. We also assume that $\mc E\cong (\mc D\mc E)[-n]$, for example if $\mc E$ is constant and $X$ is orientable \cite[Section V.7.10]{Bo}. 

Now suppose that $\vec m$ is some ts-perversity with $\vec m_1=\bar m$ and that $\vec n=D\vec m$. To simplify the notation we will write $\ms P^{\vec m}_k= \ms P_{X,\vec m,\mc E}|_{U_k}$ and $\ms P^{\vec n}_k= \ms P_{X,\vec n,\mc E}|_{U_k}$.
Suppose that $\ms P^{\vec m}_k\cong \ms P^{\vec n}_k$ in $D^+(U_k)$. We will examine what conditions are needed to extend this isomorphism to $U_{k+1}$, i.e.\ over the strata $Z\subset X_{n-k}$. By construction, we know that $\ms P^{\vec m}_{k+1}=\mf t_{\leq\vec m}^{X_{n-k}}Ri_{k*}\ms P^{\vec m}_{k}$, with $i_k:U_k\into U_{k+1}$ the inclusion, and similarly for $\ms P^{\vec n}_{k+1}$. So, given the isomorphism over $U_k$ and Lemma \ref{L: properties}, the issue comes down to when the truncations  $\mf t_{\leq\vec n}^{X_{n-k}}Ri_{k*}\ms P^{\vec n}_{k}$ and $\mf t_{\leq\vec m}^{X_{n-k}}Ri_{k*}\ms P^{\vec m}_{k}\cong \mf t_{\leq\vec m}^{X_{n-k}}Ri_{k*}\ms P^{\vec n}_{k}$ produce the same results on the strata of $X_{n-k}$. This, in turn, comes down to analyzing the behavior of stalks over points $x\in Z\subset X_{n-k}$. If $x$ is such a point with link $L$ and $\vec p$ is any ts-perversity then by \cite[Lemma V.3.9 and Proposition V.3.10.b]{Bo} and Lemma \ref{L: link sheaf} we know that
$$\mc H^i\left(Ri_{k*}\ms P^{\vec p}_{k}\right)_x\cong \dlim_{x\in U} \H^{i}\left(U;Ri_{k*}\ms P^{\vec p}_{k}\right)\cong \H^{i}\left(L;\ms P^{\vec p}_{k}|_L\right)\cong \H^{i}\left(L;\ms P^{\vec p}_{L}\right),$$ where $\ms P^{\vec p}_{L}$ is the ts-Deligne sheaf on $L$ with perversity and coefficients restricted from $X$ (see the discussion prior to Lemma \ref{L: link sheaf}).  
Therefore, 

\begin{equation}
\mc H^i\left(\mf t_{\leq\vec p}^{X_{n-k}}Ri_{k*}\ms P^{\vec p}_{k}\right)_x\cong 
\begin{cases}
0,& i>\vec p\left(Z\right)+1,\\
T^{\vec p_2\left(Z\right)}H^i\left(L;\ms P^{\vec p}_L\right),& i=\vec p\left(Z\right)+1,\\\label{E: link co}
H^i\left(L;\ms P^{\vec p}_L\right),&i\leq \vec p\left(Z\right).
\end{cases}
\end{equation}
We need to see what conditions ensure that these groups agree for the perversities $\vec m$ and $\vec n$.

There are two cases to consider depending on the parity of the codimension of $Z$.
Since we assume that $\ms P^{\vec m}_k\cong \ms P^{\vec n}_k$, these complexes have a shared injective resolution $\mc I$ that we can use for computing 
$\mf t_{\leq\vec n}^{X_{n-k}}Ri_{k*}\ms P^{\vec n}_{k}$ and $\mf t_{\leq\vec m}^{X_{n-k}}Ri_{k*}\ms P^{\vec n}_{k}$
as $\mf t_{\leq\vec n}^{X_{n-k}}i_{k*}\mc I$ and $\mf t_{\leq\vec m}^{X_{n-k}}i_{k*}\mc I$, respectively. To compare the complexes, we make this assumption in what follows.

\textbf{$\codim(Z)$ is even.} In this case, $\bar m(Z)=\bar n(Z)$, so the truncation dimensions agree. Hence using the assumed isomorphism over $U_k$, the two perversities give us isomorphic stalk cohomology at $x\in Z$ if and only if $T^{\vec m_2\left(Z\right)}\H^{\bar n\left(Z\right)+1}\left(L;\ms P^{\vec n}_L\right)\cong T^{\vec n_2\left(Z\right)}\H^{\bar n\left(Z\right)+1}\left(L;\ms P^{\vec n}_L\right)$. Since $\vec n_2=D\vec m_2$, this happens if and only if these modules vanish, i.e.\ if $\H^{\bar n\left(Z\right)+1}\left(L;\ms P^{\vec n}_L\right)$ is torsion free. Furthermore, if $\H^{\bar n\left(Z\right)+1}\left(L;\ms P^{\vec n}_L\right)$ is torsion free then at each $x\in Z$ we have quasi-isomorphisms 
$\ttau^\emptyset_{\leq \bar n(Z)}i_{k*}\mc I\to   \mf t_{\leq\vec n}^{X_{n-k}}i_{k*}\mc I$ and $\ttau^\emptyset_{\leq \bar n(Z)}i_{k*}\mc I\to\mf t_{\leq\vec m}^{X_{n-k}}i_{k*}\mc I$ induced by the inclusion of sheaf complexes as in Lemmas \ref{L: pretrunc} and \ref{L: sheaf ttau} (localizing this argument to points of $Z$ as in Section \ref{S: local trunc}), and so $\mf t_{\leq\vec n}^{X_{n-k}}Ri_{k*}\ms P^{\vec n}_{k}\cong \mf t_{\leq\vec m}^{X_{n-k}}Ri_{k*}\ms P^{\vec n}_{k}$.
Thus we recover the Goresky-Siegel locally torsion free condition and its consequences \cite{GS83}.

\textbf{$\codim(Z)$ is odd.} In this case $\bar n(Z)=\bar m(Z)+1$ and so, as sheaf complexes, $ \mf t_{\leq\vec m}^{X_{n-k}}i_{k*}\mc I\subset \mf t_{\leq\vec n}^{X_{n-k}}i_{k*}\mc I$. In fact, from the definitions, $\mf t_{\leq\vec m}^{X_{n-k}}i_{k*}\mc I=\mf t_{\leq\vec m}^{X_{n-k}}\mf t_{\leq\vec n}^{X_{n-k}}i_{k*}\mc I$, and the inclusion induces cohomology maps at points in $Z$ as in Lemma \ref{L: sheaf ttau} (on $U_k$ the inclusion restricts to the identity). 
Thus we have a quasi-isomorphism  $\mf t_{\leq\vec n}^{X_{n-k}}Ri_{k*}\ms P^{\vec n}_{k}\cong \mf t_{\leq\vec m}^{X_{n-k}}Ri_{k*}\ms P^{\vec n}_{k}$ if and only if 
 $T^{\vec n_2(Z)}\H^{\bar n(Z)+1}\left(L;\ms P^{\vec n}_L\right)=0$ and $\H^{\bar n(Z)}\left(L;\ms P^{\vec n}_L\right)\cong T^{\vec m_2(Z)}\H^{\bar n(Z)}\left(L;\ms P^{\vec n}_L\right)$. In fact, the second condition implies the first as we now show:

We have assumed that $\ms P^{\vec n}_k\cong \ms P^{\vec m}_k \cong \mc D\ms P^{\vec n}_k[-n]$ on $U_k$. Consequently, if we fix a link $L$ in a distinguished neighborhood $V$ of $x\in Z\subset X_{n-k}$, we have that $V-Z\cong L\times \R^{n-k+1}$ and so 
\begin{align}
\H^i\left(L;\ms P^{\vec n}_L\right)&\cong \H^i\left(L;\ms P^{\vec n}_k|_L\right)\cong \H^i\left(V-Z; \ms P^{\vec n}_k\right)\cong \H^{i-n}\left(V-Z; \mc D\ms P^{\vec n}_k\right)\label{E: link condition}\\
&\cong \Hom\left(\H^{n-i}_c\left(V-Z; \ms P^{\vec n}_k\right),R\right)\oplus \Ext\left(\H^{n-i+1}_c\left(V-Z; \ms P^{\vec n}_k\right),R\right)\notag\\
&\cong \Hom\left(\H^{k-i-1}_c\left(L; \ms P^{\vec n}_L\right),R\right)\oplus \Ext\left(\H^{k-i}_c\left(L;\ms P^{\vec n}_L\right),R\right).\notag
\end{align}
The first two isomorphisms are by Lemma \ref{L: link sheaf} and \cite[Remark V.3.4]{Bo}. The third isomorphism is by assumption. The last two are by \cite[Theorem 3.4.4]{BaIH},  \cite[Lemma V.3.8]{Bo}, Lemma \ref{L: link sheaf}, and that the $\H^{\ell}_c\left(V-Z; \ms P^{\vec n}_k\right)\cong \H^{\ell-(n-k+1)}_c\left(L; \ms P^{\vec n}_L\right)$ are finitely generated owing to the compactness of $L$ and the constructibility of $\ms P^{\vec n}_L$ (Theorem \ref{T: cc} and \cite[Remark V.3.4]{Bo}).

So if $\codim(Z)=k=2j+1$, we have $\bar n(Z)=\left\lceil\frac{2j-1}{2}\right\rceil=j$ and therefore
$$T\H^{\bar n\left(Z\right)+1}\left(L;\ms P^{\vec n}_L\right)=T\H^{j+1}\left(L;\ms P^{\vec n}_L\right)\cong \Ext\left(\H^{j}_c\left(L; \ms P^{\vec n}_L\right),R\right)\cong T\H^{j}\left(L; \ms P^{\vec n}_L\right),$$
using that $L$ is compact and again that each $\H^\ell(L;\ms P^{\vec n}_L)$ is finitely generated. So if we assume that $\H^{\bar n(Z)}(L;\ms P^{\vec n}_L)=\H^{j}(L;\ms P^{\vec n}_L)$ is $\vec m_2(Z)$-torsion, then $T\H^{\bar n(Z)+1}\left(L;\ms P^{\vec n}_L\right)$ will also be $\vec m_2(Z)$-torsion and so $T^{\vec n_2(Z)}\H^{\bar n(Z)+1}\left(L;\ms P^{\vec n}_L\right)=0$ as $\vec n_2(Z)=D\vec m_2(Z)$. 

Altogether then, for $\vec m$ and $\vec n$ to give the same modules on the extension to odd codimension strata the condition is that $\H^{\bar n(Z)}\left(L;\ms P^{\vec n}_L\right)\cong T^{\vec m_2(Z)}\H^{\bar n(Z)}\left(L;\ms P^{\vec n}_L\right)$. If we choose the least restrictive possibility with $\vec m_2(Z)=\primeset{P}(R)$ and $\vec n_2(Z)=\emptyset$, then we can obtain a self-dual extension so long as $\H^{\bar n(Z)}\left(L;\ms P^{\vec n}_L\right)$ is a torsion module. This condition is essentially the Cappell-Shaneson torsion condition for superduality \cite{CS91}, but applied only to the odd codimension strata.

\textbf{Conclusion for torsion-free coefficients.} Putting together the preceding paragraphs, we obtain the following conclusion. As in Lemma \ref{L: link sheaf}  we let $\vec n$ and $\mc E|_L$ denote the restrictions of $\vec n$ and $\mc E$ to $L$. The last statement is due to Example \ref{E: GM}, as $\bar n$ is a nonnegative and nondecreasing function of codimension.

\begin{theorem}\label{T: torsion free duality}
Suppose $X$ is an $n$-dimensional stratified pseudomanifold, that $\vec n$ is a ts-perversity satisfying $\vec n_1=\bar n$, and that $\mc E$ is a coefficient system with finitely-generated torsion-free stalks that satisfies $\mc E\cong\mc D\mc E[-n]$.
Then  $\ms P_{\vec n}=\ms P_{X,\vec n,\mc E}$ satisfies $\ms P_{\vec n}\cong \mc D\ms P_{\vec n}[-n]$ if and only if the following conditions hold:

\begin{enumerate}

\item If $L$ is a link of a point in a stratum of codimension $2j$ then $\H^{j}\left(L;\ms P_{L,\vec n,\mc E|_L}\right)$ is torsion-free.

\item If $L$ is a link of a point in a stratum of codimension $2j+1$ then 
$\H^{j}\left(L;\ms P_{L,\vec n,\mc E|_L}\right)$ is $D\vec n_2(Z)$-torsion. 
 
\end{enumerate}

In particular, taking $\vec n=(\bar n, \emptyset)$, the ordinary Deligne-sheaf $\mc P_{X,\bar n,\mc E}$ satisfies $\mc P_{X,\bar n,\mc E}\cong \mc D\mc P_{X,\bar n,\mc E}[-n]$ if and only if $\H^{j}\left(L;\mc P_{L, \bar n,\mc E|_L}\right)\cong I^{\bar n}H_{j-1}\left(L;\mc E|_L\right)$ is torsion-free for each link $L^{2j-1}$
 and  $\H^{j}\left(L;\mc P_{L,\bar n,\mc E|_L}\right)\cong I^{\bar n}H_{j}\left(L;\mc E|_L\right)$ is a torsion module for each link $L^{2j}$. 
\end{theorem}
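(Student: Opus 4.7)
The plan is to combine Theorem \ref{T: duality} with the hypothesis $\mc E\cong \mc D\mc E[-n]$ to reduce the self-duality question to a direct comparison of two ts-Deligne sheaves built with the same coefficients. Indeed, Theorem \ref{T: duality} yields $\mc D\ms P^*_{\vec n,\mc E}[-n]\cong \ms P^*_{D\vec n,\mc D\mc E[-n]}\cong \ms P^*_{D\vec n,\mc E}$, so the desired self-duality is equivalent to $\ms P^*_{\vec n,\mc E}\cong \ms P^*_{D\vec n,\mc E}$. Writing $\vec m=D\vec n$, we have $\vec m_1=\bar m$ and $\vec m_2(Z)=D\vec n_2(Z)$ for each singular stratum $Z$.

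I would then proceed by induction on the codimension, establishing the isomorphism $\ms P^{\vec n}_k\cong \ms P^{\vec m}_k$ stratum by stratum through the filtration, with trivial base case $\ms P^{\vec n}_1=\mc E=\ms P^{\vec m}_1$. For the inductive step, Lemma \ref{L: properties} shows that extending the isomorphism across $X_{n-k}$ reduces to matching the truncations $\mf t^{X_{n-k}}_{\leq \vec n}Ri_{k*}\ms P^{\vec n}_k$ and $\mf t^{X_{n-k}}_{\leq \vec m}Ri_{k*}\ms P^{\vec n}_k$ on the stratum. By Lemma \ref{L: link sheaf} together with \cite[Lemma V.3.9, Proposition V.3.10.b]{Bo}, the relevant stalks at $x\in Z\subset X_{n-k}$ are computed by the link hypercohomology $\H^*(L;\ms P^{\vec n}_L)$, and the formula \eqref{E: link co} governs which cohomology survives truncation.

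The core of the argument is then the case analysis on the parity of $\codim(Z)$. For even codimension $2j$, both perversities truncate at the common degree $j-1$, so the only possible discrepancy appears at degree $j$, where one obtains $T^{\vec n_2(Z)}H^j(L;\ms P^{\vec n}_L)$ versus $T^{D\vec n_2(Z)}H^j(L;\ms P^{\vec n}_L)$. Since $\vec n_2(Z)$ and $D\vec n_2(Z)$ are complementary sets of primes, equality forces $H^j(L;\ms P^{\vec n}_L)$ to be torsion-free, giving condition (1). For odd codimension $2j+1$, the truncation degrees differ by one: compatibility at degree $j$ requires the full module $H^j(L;\ms P^{\vec n}_L)$ to equal its $\vec m_2(Z)=D\vec n_2(Z)$-torsion subgroup, yielding condition (2).

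The main obstacle in the odd-codimension case is that one must also verify $T^{\vec n_2(Z)}H^{j+1}(L;\ms P^{\vec n}_L)=0$, which is not among the two listed conditions. Here the inductive self-duality $\ms P^{\vec n}_k\cong \mc D\ms P^{\vec n}_k[-n]$ on $U_k$, obtained from the running isomorphism $\ms P^{\vec n}_k\cong \ms P^{\vec m}_k$ combined with Theorem \ref{T: duality}, becomes essential: the $\Ext$--$\Hom$ computation displayed in \eqref{E: link condition} identifies $TH^{j+1}(L;\ms P^{\vec n}_L)$ with $TH^j(L;\ms P^{\vec n}_L)$, so once the latter is forced to be $D\vec n_2(Z)$-torsion its image under this identification is as well, and hence its $\vec n_2(Z)$-torsion subgroup vanishes automatically. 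The final statement concerning the classical Deligne sheaf $\mc P^*_{X,\bar n,\mc E}$ then follows from the previous paragraphs by invoking Example \ref{E: GM}, which identifies $\ms P^*_{X,(\bar n,\emptyset),\mc E}$ with $\mc P^*_{X,\bar n,\mc E}$ since $\bar n$ is nondecreasing and nonnegative, together with the standard translation between link hypercohomology and intersection homology.
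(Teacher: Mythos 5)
Your proposal is correct and follows essentially the same route as the paper: reduce via Theorem \ref{T: duality} and the hypothesis $\mc E\cong \mc D\mc E[-n]$ to comparing $\ms P^*_{\vec n,\mc E}$ with $\ms P^*_{D\vec n,\mc E}$, induct through the filtration using Lemma \ref{L: link sheaf} and formula \eqref{E: link co} to reduce to a stalk comparison on each new stratum, split by parity of codimension to extract conditions (1) and (2), and in the odd case use the running self-duality on $U_k$ together with the $\Ext$--$\Hom$ computation \eqref{E: link condition} to show $T\H^{j+1}(L;\ms P^{\vec n}_L)\cong T\H^{j}(L;\ms P^{\vec n}_L)$, so that condition (2) automatically forces the complementary torsion in degree $j+1$ to vanish.
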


\begin{remark}
The construction of self-dual spaces in Goresky-Siegel \cite[Section 7]{GS83} (called IP spaces in Pardon \cite{Pa90}) requires that $I^{\bar n}H_{j-1}(L;\mc E|_L)$ be torsion-free for each link $L^{2j-1}$ while 
 $I^{\bar n}H_{j}(L;\mc E|_L)=0$ for each link $L^{2j}$. The spaces in Cappell-Shaneson \cite{CS91} have only even codimension strata and/or link intersection homology modules that are all torsion. The last statement of Theorem \ref{T: torsion free duality} includes both of these classes of spaces, exposing a more general class on which upper-middle perversity intersection homology is self-dual. As far as we know, it has not been observed previously that self-duality extends to such spaces. We will consider this class of spaces further in future work.
\end{remark}

\subsubsection{Torsion coefficients}

Next we consider self-duality when the coefficient system has torsion stalks. Cappell and Shaneson first observed examples of such dualities in \cite[pages 340-341]{CS91}. Our shift degrees will be slightly different from theirs owing to a difference in conventions; see Remark \ref{R: CS convention}.

The following lemma shows that when we work with torsion coefficient systems our link cohomology is always torsion. 

\begin{lemma}\label{L: all torsion}
Suppose $X$ is a compact stratified pseudomanifold and that $\mc E$ is a ts-coefficient system on $X$ such that $H^i\left(\mc E_x\right)$ is a torsion module for all $x,i$. Then $\H^i\left(X;\ms P_{\vec p,\mc E}\right)$ is a torsion module for all $i$. 
\end{lemma}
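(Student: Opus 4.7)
The plan is to induct on $n = \dim X$, proving the stronger statement that $H^i(\ms P^*_x)$ is also torsion for every $x \in X$ and every $i$. For the base case $n = 0$, the space $X$ is a finite discrete set, the Deligne construction gives simply $\ms P^* = \mc E^*$, and $\H^i(X; \mc E^*) \cong \bigoplus_{x \in X} H^i(\mc E^*_x)$ is a finite direct sum of torsion modules.

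For the inductive step, I first verify the stalk statement. On the regular stratum $U_1$, the stalks of $\ms P^*$ agree with those of $\mc E^*$ and are torsion by hypothesis. For $x \in Z \subset X_{n-k}$ with $k \geq 1$, the link $L$ is a compact stratified pseudomanifold of dimension $k-1 < n$. The link computation from the proof of Theorem \ref{T: vanishing}, together with Lemma \ref{L: link sheaf}, gives
\[
\mc H^i(Ri_{k*}\ms P^*_k)_x \;\cong\; \H^i(L;\ms P^*|_L) \;\cong\; \H^i(L;\ms P^*_{L,\vec p,\mc E^*|_L}).
\]
Since having torsion stalks is a local condition, $\mc E^*|_L$ is a ts-coefficient system on $L$ with torsion stalks, so by induction the right-hand side is torsion. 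Lemmas \ref{L: properties} and \ref{L: sheaf ttau} then show that applying $\mf t_{\leq \vec p}^{X_{n-k}}$ to $Ri_{k*}\ms P^*_k$ leaves each stalk cohomology module at $x$ either unchanged, replaced by its $\vec p_2(Z)$-torsion submodule, or set to zero; in all cases the torsion property persists.

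For the global statement, I invoke flat base change along $R \hookrightarrow Q(R)$. Flatness of $Q(R)$ implies that $\ms P^* \otimes_R Q(R)$ has stalk cohomology $H^i(\ms P^*_x) \otimes_R Q(R) = 0$ at every point, hence is acyclic. Constructibility of $\ms P^*$ (Theorem \ref{T: cc}) combined with compactness of $X$ puts us in a setting where flat base change for hypercohomology applies, yielding
\[
\H^i(X;\ms P^*) \otimes_R Q(R) \;\cong\; \H^i(X;\ms P^*\otimes_R Q(R)) \;=\; 0.
\]
Vanishing after tensoring with $Q(R) = S^{-1}R$ (with $S = R \setminus \{0\}$) is equivalent, for an $R$-module, to being torsion. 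Hence $\H^i(X;\ms P^*)$ is torsion, completing the induction.

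The main obstacle is the flat base change step, which the paper has not explicitly developed. Should this require more care than is comfortable, an alternative is the hypercohomology spectral sequence $E_2^{p,q} = H^p(X; \mc H^q(\ms P^*)) \Rightarrow \H^{p+q}(X; \ms P^*)$. Each $\mc H^q(\ms P^*)$ is constructible with finitely generated torsion stalks (by the stalk analysis and Theorem \ref{T: cc}), so the problem reduces to showing that cohomology of such a sheaf on a compact stratified pseudomanifold is torsion. This follows by a secondary induction along the stratification using the long exact sequence for the closed/open pair $(X^{n-1}, X - X^{n-1})$, which reduces on the open manifold piece to compactly-supported cohomology of a locally constant sheaf of finitely generated torsion modules, and this is torsion by a finite CW/triangulation argument.
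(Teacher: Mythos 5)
Your proposal is essentially correct and follows the same route as the paper's proof, with a few cosmetic differences. The paper inducts on depth rather than dimension, and it does not strengthen the inductive statement: at each step the stalk cohomology at $x\in X_{n-k}$ is identified with a submodule of $\H^i(L;\ms P^*|_L)$, which is handled by applying the lemma itself (the global statement) to the compact lower-depth link $L$ via Lemma \ref{L: link sheaf}. For the global-from-stalks step, the paper uses exactly your ``alternative'': the hypercohomology spectral sequence $E_2^{p,q}=H^p(X;\mc H^q(\ms P^*))$, but it kills the $E_2$-page by citing Bredon's Theorem II.15.3 for the isomorphism $H^p(X;\mc H^q(\ms P^*))\otimes Q(R)\cong H^p(X;\mc H^q(\ms P^*)\otimes Q(R))$ (valid here since $X$ is compact), rather than your proposed secondary induction along the stratification. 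Your primary route --- flat base change for hypercohomology along $R\hookrightarrow Q(R)$ --- is legitimate for bounded constructible complexes on a compact finite-dimensional space, but unwinding its justification essentially reproduces the spectral sequence argument, so the paper's choice to work at the $E_2$-level with Bredon's theorem is the cleaner formulation, and you were right to flag that step as the one needing care.
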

\begin{proof}
Let $\ms P=\ms P_{\vec p,\mc E}$. We first show that $\H^i(X;\ms P)$ is torsion if all the $H^i\left(\ms P_x\right)$ are torsion. In fact, $\H^i(X;\ms P)$ is the abutment of a spectral sequence with $E_2^{p,q}\cong H^p(X;\mc H^q(\ms P))$ \cite[Section V.1.4]{Bo}. Let $Q(R)$ be the field of fractions of $R$. Then $H^p(X;\mc H^q(\ms P))\otimes Q(R)\cong H^p(X;\mc H^q(\ms P)\otimes Q(R))=0$ by \cite[Theorem II.15.3]{Br} and our hypotheses (we also use here that $X$ is compact). So each $E_\infty^{p,q}$ is also a torsion module. The module $\H^i(X;\ms P)$ is therefore the end result of a finite sequence of extensions of torsion modules by torsion modules, so it follows that each $\H^i(X;\ms P)$ is a torsion module.

We can now proceed by an induction on the depth of $X$. If $X$ has depth $0$ then $\ms P=\mc E$ so the result follows from the preceding paragraph. Now suppose we have shown the lemma for $X$ of depth $<K$ and that $X$ has depth $K$. By the preceding paragraph it suffices to show that $H^i\left(\ms P_x\right)$ is always torsion. This is true over $U_1$ by assumption as $\ms P|_{U_1}\cong \mc E$. If $x\in X_{n-k}$ then we have  $\mc H^i(\ms P)_x\subset \H^i(L;\ms P|_L)$ by \eqref{E: link co} and the construction of $\ms P$. As $\ms P|_L$ is the ts-Deligne sheaf with the restricted ts-perversity and ts-coefficient system by Lemma \ref{L: link sheaf} and as $L$ is compact and has depth less than that of $X$, these modules are $R$-torsion by the induction hypothesis.
\end{proof}

In this section we suppose $H^i(\mc E)_x$ is trivial unless $i=0$, in which case it is a finitely-generated torsion module. We can thus assume that $\mc E$ is in fact a local system of such modules concentrated in degree $0$.
By Proposition \ref{P: dual coeff}, we know that $\mc D\mc E[-n]_x\cong \mc E[-1]_x$. So our global duality assumption for coefficients throughout this section is that $\mc D\mc E[-n]\cong \mc E[-1]$. This will hold, for example, if $\mc E$ is constant and $X$ is orientable. 

By Theorem \ref{T: duality}, we have 
$\mc D\ms P_{\vec p,\mc E}[-n]\cong \ms P_{D\vec p,\mc D\mc E[-n]}\cong \ms P_{D\vec p,\mc E[-1]}.$
But if we start with $\mc E[-1]$ as our coefficient system and form the ts-Deligne sheaf $\ms P_{\vec q,\mc E[-1]}$, then it is not hard to see from the definitions that $\ms P_{\vec q,\mc E[-1]}=\ms P_{\vec q^{\,-},\mc E}[-1]$, where $\vec q^{\,-}(Z)=\left(\vec q_1(Z)-1,\vec q_2(Z)\right)$ on the singular stratum $Z$. 
So we obtain $\mc D\ms P_{\vec p,\mc E}[-n]\cong \ms P_{(D\vec p)^-,\mc E}[-1]$. 
Thus, matching coefficient degrees, we see in this case that self-duality up to shifts means 
\begin{equation}\label{E: sd tor}
\ms P_{\vec p,\mc E}\cong \mc D\ms P_{\vec p,\mc E}[-n+1]\cong \ms P_{(D\vec p)^-,\mc E}.
\end{equation}
So we must ask when $\ms P_{\vec p,\mc E}\cong \ms P_{(D\vec p)^-,\mc E}$.

If $\vec p_1(Z)=(D\vec p_1)^-(Z)$, then $\vec p_1(Z)=\codim(Z)-2-\vec p_1(Z)-1$ and so $\vec p_1(Z)=\frac{\codim(Z)-3}{2}$. If $\codim (Z)=2k+1$, we thus get $\vec p_1(Z)=k-1=\bar m(Z)$. If $\codim(Z)=2k$, then of course $\frac{\codim(Z)-3}{2}=\frac{2k-3}{2}$ is not valid. 
If we round up, we get $\left\lceil\frac{2k-3}{2}\right\rceil=k-1=\bar m(Z)$. If we round down, we get $\left\lfloor\frac{2k-3}{2}\right\rfloor=k-2=\bar m(Z)-1$.  Let us write this as $$\bar \mu(Z)=\left\lfloor\frac{\codim(Z)-3}{2}\right\rfloor.$$ Thus we will consider ts-perversities $\vec \mu$ and $\vec m$ such that  $\vec m_1=\bar m$, $\vec \mu_1=\bar \mu$, and $\vec m_2(Z)=D\vec \mu_2(Z)$ for all singular strata $Z$.
Adopting our notation from the torsion-free coefficient case, we will examine when an isomorphism $\ms P^{\vec \mu}_k\cong \ms P^{\vec m}_k$ in $D^+(U_k)$ can be extended to $U_{k+1}$ by considering the cohomology stalks over points of strata $Z\subset X_{n-k}$. As in that case, we assume that $\ms P^{\vec m}_k, \ms P^{\vec \mu}_k$ have a shared injective resolution $\mc I$ and treat
$\mf t_{\leq\vec \mu}^{X_{n-k}}Ri_{k*}\ms P^{\vec \mu}_{k}$ and $\mf t_{\leq\vec m}^{X_{n-k}}Ri_{k*}\ms P^{\vec m}_{k}$
as $\mf t_{\leq\vec \mu}^{X_{n-k}}i_{k*}\mc I$ and $\mf t_{\leq\vec m}^{X_{n-k}}i_{k*}\mc I$, respectively.

\textbf{$\codim(Z)$ is odd.} In this case $\bar \mu(Z)=\bar m(Z)$, so the truncation dimensions agree.  Thus using the assumed isomorphism over $U_k$,  the stalk cohomologies will agree over $x\in Z$ if and only if $T^{\vec \mu_2\left(Z\right)}\H^{\bar m\left(Z\right)+1}\left(L;\ms P^{\vec m}_{L}\right)\cong T^{\vec m_2\left(Z\right)}\H^{\bar m\left(Z\right)+1}\left(L;\ms P^{\vec m}_{L}\right)$. But $\vec \mu_2(Z)=D\vec m_2(Z)$, so this happens only if these modules vanish. If $\codim(Z)=2j+1$ this means that we must have 
$T^{\vec \mu_2\left(Z\right)}\H^{j}\left(L;\ms P^{\vec m}_{L}\right)=T^{\vec m_2\left(Z\right)}\H^{j}\left(L;\ms P^{\vec m}_{L}\right)=0$. As $\vec \mu_2(Z)$ and $\vec m_2(Z)$ are complementary, this is equivalent to $\H^{j}\left(L;\ms P^{\vec m}_{L}\right)$ being torsion free. But by Lemma \ref{L: all torsion} this is equivalent to  $\H^{j}\left(L;\ms P^{\vec m}_{L}\right)=0$.
This requirement is met vacuously in \cite[pages 340-341]{CS91} due to the assumption there that $X$ have only even-codimension strata. Conversely, if this condition holds, then as in the even codimension case in Section \ref{S: TFC}, both $\mf t_{\leq\vec \mu}^{X_{n-k}}i_{k*}\mc I$ and $\mf t_{\leq\vec m}^{X_{n-k}}i_{k*}\mc I$ are isomorphic to the $\ttau_{\leq j-1}^\emptyset$ truncation of $i_{k*}\mc I$ over $X_{n-k}$, and so 
$\mf t_{\leq\vec \mu}^{X_{n-k}}Ri_{k*}\ms P^{\vec \mu}_{k}\cong \mf t_{\leq\vec m}^{X_{n-k}}Ri_{k*}\ms P^{\vec m}_{k}$

\textbf{$\codim(Z)$ is even.} In this case $\bar m(Z)=\bar \mu(Z)+1$. So again as in the analogous case in Section \ref{S: TFC}
 $\mf t_{\leq\vec \mu}^{X_{n-k}}i_{k*}\mc I=\mf t_{\leq\vec \mu}^{X_{n-k}}\mf t_{\leq\vec m}^{X_{n-k}}i_{k*}\mc I$ with the inclusion inducing cohomology maps at points in $Z$ as in Lemma \ref{L: sheaf ttau}.
Thus this inclusion will be a quasi-isomorphism if and only if we have $T^{\vec m_2\left(Z\right)}\H^{\bar m\left(Z\right)+1}\left(L;\ms P^{\vec m}_{L}\right)=0$ and $\H^{\bar m\left(Z\right)}\left(L;\ms P^{\vec m}_{L}\right)\cong T^{\vec \mu_2\left(Z\right)}\H^{\bar m\left(Z\right)}\left(L;\ms P^{\vec m}_{L}\right)$. If $\codim\left(Z\right)=2j$, then $\bar \mu\left(Z\right)=j-2$, $\bar m\left(Z\right)=j-1$, and $\dim\left(L\right)=2j-1$. So the conditions become $T^{\vec m_2\left(Z\right)}\H^{j}\left(L;\ms P^{\vec m}_{L}\right)=0$ and $\H^{j-1}\left(L;\ms P^{\vec m}_{L}\right)\cong T^{\vec \mu_2\left(Z\right)}\H^{j-1}\left(L;\ms P^{\vec m}_{L}\right)$.

We use again the computation \eqref{E: link condition}, replacing $-n$ with $-n+1$, taking $\codim(Z)=k=2j$, and recalling that all modules are torsion. This results in
$\H^j\left(L;\ms P^{\vec m}_{L}\right)\cong \Ext\left(\H^{j-1}_c\left(L; \ms P^{\vec m}_{L}\right),R\right)\cong \H^{j-1}\left(L;\ms P^{\vec m}_{L}\right)$. So the conditions are equivalent to these isomorphic modules being $\mu_2(Z)$-torsion.
If $\vec m_2(Z)=\emptyset$ and so $\vec \mu_2(Z)=\primeset{P}(R)$, then this condition is always true.
This is the situation utilized in \cite[pages 340-341]{CS91}. At the other extreme, if $\vec m_2(Z)=\primeset{P}(R)$ and $\vec \mu_2(Z)=\emptyset$, the requirement becomes that $\H^{j}\left(L;\ms P^{\vec m}_{L}\right)=\H^{j-1}\left(L;\ms P^{\vec m}_{L}\right)=0$.  

\textbf{Conclusion for torsion coefficients.}
Putting together the preceding paragraphs, we obtain the following conclusion. As in Lemma \ref{L: link sheaf}  we let $\vec m$ and $\mc E|_L$ denote the restrictions of $\vec m$ and $\mc E$ to $L$. The last statement is due to Example \ref{E: GM}, as $\bar m$ is a nonnegative and nondecreasing function of codimension when $X$ has no codimension one strata.

\begin{theorem}
Suppose $X$ is an $n$-dimensional stratified pseudomanifold, that $\vec m$ is a ts-perversity satisfying $\vec m_1=\bar m$, and that $\mc E$ is a coefficient system with finitely-generated torsion stalks that satisfies $\mc E\cong \mc D\mc E[-n+1]$.
Then  $\ms P_{\vec m}=\ms P_{X,\vec m,\mc E}$ satisfies $\ms P_{\vec m}\cong\mc D\ms P_{\vec m}[-n+1]$ if and only if the following conditions hold:

\begin{enumerate}

\item If $L$ is a link of a point in a stratum of codimension $2j+1$ then $\H^{j}\left(L;\ms P_{L,\vec m,\mc E|_L}\right)=0$.

\item If $L$ is a link of a point in a stratum of codimension $2j$ then 
$\H^{j}\left(L;\ms P_{L,\vec m,\mc E|_L}\right)$ is $D\bar m_2(Z)$-torsion.
 
\end{enumerate}

In particular, taking $\vec m=(\bar m, \emptyset)$ and assuming $X$ has no codimension one strata, the ordinary Deligne-sheaf $\mc P_{X,\bar m,\mc E}$ satisfies $\mc P_{X,\bar m,\mc E}\cong \mc D\mc P_{X,\bar m,\mc E}[-n+1]$ if and only $\H^{j}(L;\mc P_{L, \bar m,\mc E|_L})\cong I^{\bar m}H_{j-1}\left(L;\mc E|_L\right)=0$ for each link $L^{2j-1}$. 
\end{theorem}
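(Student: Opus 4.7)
The plan is to reduce the self-duality statement to a sheaf-level comparison of ts-Deligne sheaves sharing the same coefficient system, and then to handle that comparison inductively, stratum by stratum. Applying Theorem \ref{T: duality} together with Proposition \ref{P: dual coeff} and the hypothesis $\mc E\cong \mc D\mc E[-n+1]$ (equivalently $\mc D\mc E[-n]\cong \mc E[-1]$) gives $\mc D\ms P^*_{\vec m,\mc E}[-n]\cong \ms P^*_{D\vec m,\mc E[-1]}$. The identity $\ms P^*_{\vec q,\mc E[-1]}=\ms P^*_{\vec q^{\,-},\mc E}[-1]$ recorded immediately before \eqref{E: sd tor} then rewrites the self-duality $\ms P^*_{\vec m,\mc E}\cong \mc D\ms P^*_{\vec m,\mc E}[-n+1]$ as the equivalent condition $\ms P^*_{\vec m,\mc E}\cong \ms P^*_{\vec\mu,\mc E}$, where $\vec\mu=(D\vec m)^{-}$ satisfies $\vec\mu_1(Z)=\codim(Z)-3-\bar m(Z)$ and $\vec\mu_2(Z)=D\vec m_2(Z)$.

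I would then establish this isomorphism by induction on $k$, showing that an isomorphism on $U_k$ extends over $U_{k+1}$. The base case $k=1$ is immediate since both sides restrict to $\mc E$. For the inductive step, $\ms P^{\vec m}_{k+1}$ and $\ms P^{\vec\mu}_{k+1}$ both arise as locally torsion-tipped truncations of $Ri_{k*}$ applied to the (now common) restriction to $U_k$, so by Lemma \ref{L: properties} the comparison reduces to agreement of cohomology stalks at points $x\in Z\subset X_{n-k}$. By \cite[Lemma V.3.9 and Proposition V.3.10.b]{Bo}, Theorem \ref{T: cc}, and Lemma \ref{L: link sheaf}, these stalks compute via the link hypercohomology $\H^i(L;\ms P^*_{L,\vec m,\mc E|_L})$ in the sense of \eqref{E: link co}, making the issue purely a condition on links.

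The comparison then splits by the parity of $k=\codim(Z)$. For $k=2j+1$ odd, both perversities truncate at degree $\bar m(Z)=j$, so agreement of the torsion-tipped stalks forces $T^{\vec m_2(Z)}\H^j(L;\ms P^*_L)=T^{D\vec m_2(Z)}\H^j(L;\ms P^*_L)$; since the two sets of primes are complementary, both torsion submodules must vanish, and by Lemma \ref{L: all torsion} (the link hypercohomology being already torsion) this collapses to $\H^j(L;\ms P^*_L)=0$, yielding condition (1). For $k=2j$ even, the truncation degrees are $\vec m_1(Z)=j-1$ and $\vec\mu_1(Z)=j-2$, differing by one, so agreement demands both $T^{\vec m_2(Z)}\H^j(L;\ms P^*_L)=0$ and $\H^{j-1}(L;\ms P^*_L)\cong T^{D\vec m_2(Z)}\H^{j-1}(L;\ms P^*_L)$. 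Running the duality computation of \eqref{E: link condition}, but with $\mc D\mc E[-n+1]\cong \mc E$ in place of $\mc D\mc E[-n]\cong \mc E$, produces the isomorphism $\H^j(L;\ms P^*_L)\cong \H^{j-1}(L;\ms P^*_L)$, and together with Lemma \ref{L: all torsion} this consolidates the two conditions into the single requirement that $\H^j(L;\ms P^*_L)$ be $D\vec m_2(Z)$-torsion, yielding condition (2). The main obstacle is carefully bookkeeping the interplay of the $[-n+1]$ shift, the passage from $D\vec m$ to $\vec\mu$, and the swap between $\vec m_2$ and its complement in the two parities; once that is in hand, all the needed ingredients are already supplied by the preceding sections.

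The ``in particular'' statement then follows by specialization: taking $\vec m_2(Z)=\emptyset$ gives $D\vec m_2(Z)=\primeset{P}(R)$, so the torsion requirement in condition (2) is satisfied automatically by Lemma \ref{L: all torsion}. The absence of codimension-one strata places us in the Goresky-MacPherson regime of Example \ref{E: GM}, where $\ms P^*_{X,\vec m,\mc E}$ agrees with the ordinary Deligne sheaf $\mc P^*_{X,\bar m,\mc E}$, and the surviving link condition translates into the vanishing of an intersection homology group via the standard identification $\H^i(L;\mc P^*_L)\cong I^{\bar m}H_{\dim L-i}(L;\mc E|_L)$.
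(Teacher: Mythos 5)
Your proposal is correct and follows essentially the same route as the paper: reduce self-duality to $\ms P^*_{\vec m,\mc E}\cong\ms P^*_{(D\vec m)^-,\mc E}$ via Theorem \ref{T: duality} and the shift identity, extend stratum by stratum comparing link hypercohomology, and split by parity of codimension, invoking Lemma \ref{L: all torsion} and the shifted version of \eqref{E: link condition} exactly as the paper does. The only slip is cosmetic: for codimension $2j+1$ the common truncation degree is $\bar m(Z)=j-1$ (with torsion tip in degree $j$), not $j$, though the condition you then derive on $\H^j\left(L;\ms P^*_{L,\vec m,\mc E|_L}\right)$ is the correct one.
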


\begin{remark}
If $X$ does have codimension one strata, the hypotheses of the theorem cannot be satisfied nontrivially, as in this case the link $L$ will be $0$-dimensional, meaning that $\H^{0}\left(L;\ms P_{L,\vec m,\mc E|_L}\right)\cong \H^{0}(L;\mc E)$ cannot always be $0$ unless $\mc E=0$.  In this case $\ms P=0$. 

Similarly, in Theorem \ref{T: torsion free duality}, if $X$ has codimension one strata then we obtain conditions requiring that $\H^{0}(L;\mc E)$ be $D\vec n_2(Z)$-torsion. As $\mc E$ is assumed to have torsion-free stalks for that theorem, again this only happens if $\ms P=0$. 
\end{remark}

\begin{remark}
If we begin instead with $\mc H^i(\mc E)=0$ for $i\neq 1$, so that we consider the local torsion system $\mc E$ in degree $1$, we obtain an equivalent condition to that studied above. In this case $\mc D\mc E[-n]\cong\mc E[1]$, so we have $\mc D\ms P_{\vec p,\mc E}[-n]\cong \ms P_{D\vec p,\mc D\mc E[-n]}\cong \ms P_{D\vec p,\mc E[1]}.$  
So here for the degrees of the coefficients to agree the self-duality equation must become $\ms P_{\vec p,\mc E}\cong \mc D\ms P_{\vec p,\mc E}[-n-1]\cong \ms P_{D\vec p,\mc E[1]}[-1]$. 
But also $\ms P_{\vec p,\mc E}\cong \ms P_{\vec p^{\,-} ,\mc E[1]} [-1]$, so this becomes 
$\ms P_{\vec p^{\,-} ,\mc E[1]} [-1]\cong \ms P_{D\vec p,\mc E[1]}[-1]$.
Replacing $\vec p$ with $\vec q^{\,+}$, noting that $D(\vec q^{\,+})=(D\vec q)^-$, and shifting  gives 
$\ms P_{ \vec q,\mc E[1]} \cong \ms P_{(D\vec q)^-,\mc E[1]}.$
But now this is precisely the same condition as \eqref{E: sd tor}.
\end{remark}

\begin{remark}\label{R: CS convention}
In \cite{CS91}, the convention is to define the Deligne sheaves with the coefficients in degree $-n$. In this case, if $\mc E$ is a local torsion system in degree $-n$ and $\ms Q_{\vec p,\mc E}$ is the corresponding Deligne sheaf, the duality statement becomes $\mc D\ms Q_{\vec p,\mc E}[n]\cong \ms Q_{D\vec p,\mc D\mc E[n]}$, with $\mc D\mc E[n]\cong \mc E[-1]$ living in degree $-n+1$. So here self-duality becomes
$$\ms Q_{\vec p,\mc E}\cong \mc D\ms Q_{\vec p,\mc E}[n+1]\cong \ms Q_{D\vec p,\mc E[-1]}[1]\cong 
\ms Q_{(D\vec p)^-,\mc E}.$$ 
In particular, the shift necessary from $\mc D\ms Q$ to $\ms Q$ becomes $[n+1]$; cf.\ \cite[pages 340-341]{CS91}.
\end{remark}

\section{Torsion-sensitive t-structures and ts-perverse sheaves}\label{S: perverse}

In this section we consider our ts-Deligne sheaves within the broader abstract setting of perverse sheaves and t-structures.
The primary source for perverse sheaves is \cite{BBD}. Good expository references include \cite{BaIH, KS, DI04, Bhatt15}. Many of the arguments in this section are variants of arguments that can be found in these texts. 

\subsection{The natural torsion-sensitive t-structure}

We begin by building a torsion-sensitive t-structure on the derived category of sheaf complexes on a stratified pseudomanifold. In this section we consider a generalization of the natural t-structure \cite[Example 1.3.2]{BBD}. In the next section we will glue across strata.

\begin{definition}\label{D: natural t}
Let $X$ be a stratified pseudomanifold, $R$ a PID, $\wp$ a set of primes of $R$, and $D(X)$ the derived category of complexes of sheaves of $R$-modules on $X$. We define strictly full subcategories ${}^\wp D^{\leq 0}(X)$ and ${}^\wp D^{\geq 0}(X)$ of $D(X)$ with objects
\begin{align*}
Ob({}^\wp D^{\leq 0}(X))&=\{\mc S\in D(X)\mid \text{$\forall x\in X$, $H^i(\mc S_x)=0$ for $i>1$ and $H^1(\mc S_x)$ is $\wp$-torsion}\}\\
Ob({}^\wp D^{\geq 0}(X))&=\{\mc S\in D(X)\mid\text{$\forall x\in X$, $H^i(\mc S_x)=0$ for $i<0$ and $H^0(\mc S_x)$ is $\wp$-torsion free}\}.
\end{align*}
We call $({}^\wp D^{\leq 0}(X),{}^\wp D^{\geq 0}(X))$ the \emph{natural $\wp$-t-structure} and denote the heart by ${}^\wp D^{\heartsuit}(X)={}^\wp D^{\leq 0}(X)\cap {}^\wp D^{\geq 0}(X)$.

We similarly obtain $t$-structures by restricting to the subcategories $D^+(X)$, $D^{-}(X)$, or $D^{b}(X)$, consisting respectively of sheaves with cohomology bounded below, bounded above, or bounded, or by restricting to the subcategories $D_{\mf X}(X)$, $D_{\mf X}^+(X)$, $D_{\mf X}^{-}(X)$, or $D_{\mf X}^{b}(X)$ consisting of complexes that are additionally $\mf X$-cc. 
\end{definition}

\begin{proposition}\label{P: t}
 $({}^\wp D^{\leq 0}(X),{}^\wp D^{\geq 0}(X))$ is a $t$-structure on $D(X)$. Similarly, the restrictions to the subcategories mentioned in Definition \ref{D: natural t} are t-structures. 
\end{proposition}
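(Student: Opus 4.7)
The plan is to verify the three t-structure axioms directly, building the truncation functors out of the torsion-tipped truncation $\ttau^\wp_{\leq 0}$ of Section \ref{S: TTT}. The shift axiom ${}^\wp D^{\leq 0}(X)\subset {}^\wp D^{\leq 1}(X)$ and ${}^\wp D^{\geq 1}(X)\subset {}^\wp D^{\geq 0}(X)$ is immediate upon unwrapping the definitions: if $H^i(\mc S^*_x)=0$ for $i>1$ then \emph{a fortiori} $H^i(\mc S^*_x)=0$ for $i>2$, and if $H^i(\mc S^*_x)=0$ for $i<1$ then certainly $H^i(\mc S^*_x)=0$ for $i<0$, with the vanishing groups trivially $\wp$-torsion or $\wp$-torsion-free.

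For the distinguished-triangle axiom, I will take $\tau^{\leq 0}\mc S^*:=\ttau^\wp_{\leq 0}\mc S^*$ and let $\tau^{\geq 1}\mc S^*$ be the mapping cone $C$ of the inclusion $\ttau^\wp_{\leq 0}\mc S^*\hookrightarrow \mc S^*$. By Lemma \ref{L: sheaf ttau} we have $\ttau^\wp_{\leq 0}\mc S^*\in{}^\wp D^{\leq 0}(X)$; and applying the long exact sequence of the triangle, together with the injectivity of $T^\wp H^1(\mc S^*_x)\hookrightarrow H^1(\mc S^*_x)$, yields
\[ H^i(C_x)=H^i(\mc S^*_x)\text{ for }i>1,\quad H^1(C_x)\cong H^1(\mc S^*_x)/T^\wp H^1(\mc S^*_x),\quad H^i(C_x)=0\text{ for }i\leq 0, \]
so that $C\in{}^\wp D^{\geq 1}(X)$ because the quotient of a module by its $\wp$-torsion submodule is $\wp$-torsion-free. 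That $\ttau^\wp_{\leq 0}$ descends to an endofunctor of $D(X)$ follows from Lemma \ref{L: sheaf ttau}, since its stalk cohomology is expressible entirely in terms of that of the input.

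For the Hom-vanishing axiom, suppose $A\in{}^\wp D^{\leq 0}(X)$ and $B\in{}^\wp D^{\geq 1}(X)$, and represent a morphism $A\to B$ in $D(X)$ by a roof $A\xleftarrow{\sim} C \xrightarrow{f} B$. I then replace $C$ by $\ttau^\wp_{\leq 0} C$, which remains quasi-isomorphic to $A$ because $C\in{}^\wp D^{\leq 0}$ already forces $T^\wp H^1(C_x)=H^1(C_x)$, reducing to a chain map $f:\ttau^\wp_{\leq 0} C \to B$. An elementary chain-level argument using the definition of weak $\wp$-boundaries shows that $f$ factors through $\ttau^\wp_{\leq 0} B$: for $a\in W^\wp C^1$ with $na=dc$ and $n\in S(\wp)$, we have $nf(a)=df(c)$, so $f(a)\in W^\wp B^1$. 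But $\ttau^\wp_{\leq 0} B$ is acyclic, since for $B\in{}^\wp D^{\geq 1}$ its stalk cohomology vanishes in degrees $\leq 0$ by hypothesis and in degree $1$ equals $T^\wp H^1(B_x)=0$ because $H^1(B_x)$ is $\wp$-torsion-free. Hence the roof represents the zero morphism in the derived category. The passage from chain-level factorization to derived-category vanishing is the main subtlety of the proof; everything else is a direct computation from Lemma \ref{L: sheaf ttau}.

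Finally, passing to the subcategories $D^+(X),D^-(X),D^b(X),D_{\mf X}(X),\ldots$ requires only checking that $\ttau^\wp_{\leq 0}$ and the cone construction preserve the relevant boundedness and constructibility. Since $\ttau^\wp_{\leq 0}$ only modifies cohomology in degrees $0$ and $1$, boundedness conditions are evidently preserved. For $\mf X$-cc, the cohomology sheaves of $\ttau^\wp_{\leq 0}\mc S^*$ are the $\mc H^i(\mc S^*)$ for $i\leq 0$ together with the $\wp$-torsion subsheaf of $\mc H^1(\mc S^*)$; the latter is locally constant on each stratum with finitely generated stalks whenever $\mc H^1(\mc S^*)$ is, since on an open set where $\mc H^1(\mc S^*)$ is constant with stalk $M$ the $\wp$-torsion subsheaf is constant with stalk $T^\wp M$. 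The analogous statement for the cone follows from the cohomology computation above. Thus the $t$-structure restricts to each of the subcategories named in Definition \ref{D: natural t}.
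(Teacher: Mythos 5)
Your proof is correct, and on two of the three axioms it diverges from the paper's argument in ways worth noting. For the distinguished-triangle axiom the paper uses the short exact sequence $0\to \ttau^{\wp}_{\leq 0}\mc S^*\to\mc S^*\to \mc S^*/\ttau^{\wp}_{\leq 0}\mc S^*\to 0$ and the associated triangle, whereas you use the mapping cone of the inclusion; since the cone of a monomorphism of complexes is quasi-isomorphic to the quotient, this is an immaterial difference and the cohomology computation is the same. The real divergence is in the Hom-vanishing axiom: the paper observes that $\mc S^*$ and $\mc T^*$ have cohomology concentrated in degrees $\leq 1$ and $\geq 1$ respectively and invokes the standard reduction $\Hom_{D(X)}(\mc S^*,\mc T^*)\cong\Hom_{Sh(X)}(\mc H^1(\mc S^*),\mc H^1(\mc T^*))$, after which the torsion/torsion-free dichotomy on stalks kills any sheaf map. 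You instead argue directly with roofs: replace the apex $C$ by $\ttau^{\wp}_{\leq 0}C$ (a quasi-isomorphic subcomplex since $H^1(C_x)$ is all $\wp$-torsion), use the functoriality of the torsion-tipped truncation (your chain-level computation with weak $\wp$-boundaries is exactly the presheaf-level naturality of Lemma \ref{L: pretrunc}, which sheafifies by Remark \ref{R: mono}) to factor the map through $\ttau^{\wp}_{\leq 0}B$, and then observe that $\ttau^{\wp}_{\leq 0}B$ is acyclic because $H^1(B_x)$ is $\wp$-torsion-free. Your route is more self-contained --- it uses only the machinery already built in Section \ref{S: TTT} and makes the same truncation functor do all the work --- at the cost of handling the roof calculus explicitly; the paper's route is shorter but leans on an external lemma about morphisms between complexes with complementary cohomological supports. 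Your closing verification that the truncation and cone preserve boundedness and $\mf X$-constructibility is also more explicit than the paper's one-line remark, and is correct.
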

\begin{proof}
We must check the three conditions to be a $t$-structure  \cite[Definition 1.3.1]{BBD}:

First let ${}^\wp D^{\leq n}={}^\wp D^{\leq 0}[-n]$ and ${}^\wp D^{\geq n}={}^\wp D^{\geq 0}[-n]$. Then it is immediate from the definitions that ${}^\wp D^{\leq -1}\subset {}^\wp D^{\leq 0}$ and ${}^\wp D^{\geq 1}\subset {}^\wp D^{\geq 0}$.

Next we must show that $\Hom_{D(X)}(\mc S,\mc T)=0$ if $\mc S\in {}^\wp D^{\leq 0}$ and $\mc T\in {}^\wp D^{\geq 1}$. Let $\mc H^*(\mc A)$ denote the cohomology sheaf complex of the sheaf complex $\mc A$. From the definitions, we note that $\mc H^i(\mc S)=0$ for $i>1$ and $\mc H^i(\mc T)=0$ for $i<1$, so by \cite[Proposition 8.1.8]{BaIH} (see also \cite[Lemma V.9.13]{Bo}) we have an isomorphism 
$\Hom_{D(X)}(\mc S,\mc T)\cong  \Hom_{Sh(X)}(\mc H^1(\mc S),\mc H^1(\mc T)),$
where $Sh(X)$ is the category of sheaf complexes on $X$. But at each $x\in X$ we have that $\mc H^1(\mc S)_x\cong H^1(\mc S_x)$ is $\wp$-torsion while 
$\mc H^1(\mc T)_x$ is $\wp$-torsion free. Since any sheaf map would have to take $T^\wp\mc H^1(\mc S)_x=\mc H^1(\mc S)_x$ to  $T^\wp\mc H^1(\mc T)_x=0$, it follows, as desired, that $\Hom_{D(X)}(\mc S,\mc T)=\Hom_{Sh(X)}(\mc H^1(\mc S),\mc H^1(\mc T))=0.$

For the last condition, we must show that to every $\mc S\in D(X)$ we can associate a distinguished triangle $$\mc A\to\mc S\to\mc C\xr{+1}$$ such that $\mc A\in {}^\wp D^{\leq 0}$ and $\mc C\in {}^\wp D^{\geq 1}$. For this, we consider the exact sequence in $Sh(X)$ 
$$0\to \ttau^{\wp}_{\leq 0} \mc S\xr{f} \mc S\xr{g} \mc S/\ttau^{\wp}_{\leq 0} \mc S\to 0,$$
in which $f$ is our standard inclusion and $g$ is the quotient map. Such a short exact sequence determines a distinguished triangle with the same complexes and the same maps $f,g$ \cite[Section 2.4]{BaIH}, and $\ttau^{\wp}_{\leq 0} \mc S\in {}^\wp D^{\leq 0}$ by construction. Taking cohomology and looking at stalks results in isomorphisms $\mc H^i(\ttau^{\wp}_{\leq 0} \mc S)_x\to \mc H^i(\mc S)_x$ for $i\leq 0$, so $\mc H^i(\mc S/\ttau^{\wp}_{\leq 0} \mc S)_x=0$ for $i<0$ and $\mc H^0(\mc S/\ttau^{\wp}_{\leq 0} \mc S)_x\to \mc H^1(\ttau^{\wp}_{\leq 0} \mc S)_x$ is injective.
So near degree $1$ the next portion of the exact sequence looks like
\begin{diagram}
\mc H^0(\mc S/\ttau^{\wp}_{\leq 0} \mc S)_x&\rInto& \mc H^1(\ttau^{\wp}_{\leq 0} \mc S)_x&\rTo^f& \mc H^1(\mc S)_x&\rTo^g& \mc H^1(\mc S/\ttau^{\wp}_{\leq 0} \mc S)_x&\rTo& \mc H^2(\ttau^{\wp}_{\leq 0} \mc S)_x=0
\end{diagram}
We know by Lemma \ref{L: sheaf ttau} that $f$ is an isomorphism onto the $\wp$-torsion submodule of $\mc H^1(\mc S)_x$, so it follows that $\mc H^0(\mc S/\ttau^{\wp}_{\leq 0} \mc S)_x=0$ and $\mc H^1(\mc S/\ttau^{\wp}_{\leq 0} \mc S)_x\cong \mc H^1(\mc S)_x/T^\wp\mc H^1(\mc S)_x$. So $\mc H^1(\mc S/\ttau^{\wp}_{\leq 0} \mc S)_x$ is $\wp$-torsion free.

For the last statement of the proposition, we observe that the preceding arguments are identical restricting to the various subcategories.
\end{proof}

\begin{remark}\label{R: natural heart}
For a fixed $\wp\subset \primeset{P}(R)$, the heart ${}^\wp D^{\heartsuit}(X)$ consists of sheaf complexes $\mc S$ such that each $H^1(\mc S_x)$ is $\wp$-torsion, each $H^0(\mc S_x)$ is $\wp$-torsion free, and all other cohomology is trivial. 
If $\wp=\emptyset$ then ${}^\emptyset D^{\heartsuit}(X)$ consists of those sheaf complexes $\mc S$ on  $X$ such that $\mc H^i(\mc S)=0$ for $i\neq 0$; this is equivalent to the category $Sh(X)$ \cite[Example 10.1.3]{KS}. The intersection ${}^\emptyset D^{\heartsuit}(X)\cap D_{\mf X}(X)$ consists (up to quasi-isomorphisms) of the sheaves of finitely-generated $R$-modules that are locally constant on each stratum. For arbitrary $\wp$ and $X$ stratified trivially ${}^\wp D^{\heartsuit}(X)\cap D_{\mf X}(X)$ consists precisely of the $\wp$-coefficient systems of Definition \ref{D: ts-coeff}.
\end{remark}

\begin{remark}\label{R: components}
It will be convenient to have a generalization of our natural t-structure that behaves differently on different connected components of a disconnected space. This will be used below in gluing arguments to avoid an infinite number of gluings for spaces with an infinite number of strata.

Suppose $X$ is the disjoint union $X=\amalg_{\alpha \in A} Y_\alpha$ for some indexing set $A$ and that we have a 
function $\vec q=(\vec q_1,\vec q_2):A\to \Z\times \P(\primeset{P}(R))$. This is slightly different from a ts-perversity in the sense of previous sections, though below such a $\vec q$ will arise as the restriction of a perversity to all the strata of a single dimension. 
Then we let
\begin{align*}
Ob({}^{\vec q} D^{\leq 0}(X))&=\{\mc S\in D(X)\mid \text{$\forall \alpha,\forall x\in Y_\alpha$, $H^i(\mc S)_x=0$ for $i>\vec q_1(Y_\alpha)+1$}\\&\qquad\text{and $H^{\vec q_1(Y_\alpha)+1}(\mc S)_x$ is $\vec q_2(Y_\alpha)$-torsion}\}\\
Ob({}^{\vec q} D^{\geq 0}(X))&=\{\mc S\in D(X)\mid\text{$\forall \alpha,\forall x\in Y_\alpha$, $H^i(\mc S)_x=0$ for $i<\vec q_1(Y_\alpha)$}\\&\qquad\text{and $H^{\vec q_1(Y_\alpha)}(\mc S)_x$ is $\vec q_2(Y_\alpha)$-torsion free}\}.
\end{align*}
This is also a $t$-structure by the same arguments as for Proposition \ref{P: t}, using a different torsion tipped truncation on each component.
Note that $\mc S\in {}^{\vec q} D^{\leq 0}(X)$ if and only if $\mc S|_{Y_\alpha}\in  {}^{\vec q_2(Y_\alpha)} D^{\leq \vec q_1(Y_\alpha)}(Y_\alpha)$ for all $\alpha$, and similarly reversing the inequalities.
\end{remark}

\paragraph{Convention.}
In what follows we will work only within the derived category  $D_{\mf X}^b(X)$, though we will omit the decorations from the already cluttered notation for the $t$-structures. 

\subsection{Torsion sensitive perverse sheaves}

In this section we build a t-structure that takes stratification into account. Though we are primarily interested in stratified pseudomanifolds, it will be useful to allow spaces slightly more general by dropping the requirement that $X-X^{n-1}$ be dense. Such spaces are said to have \emph{topological stratifications} in \cite[Section 1.1]{GM2}, while they are called \emph{unrestricted stratifications} in the Remark of \cite[Section V.2.1]{Bo}. As noted there by Borel, the constructibility and Verdier duality properties of sheaves on stratified pseudomanifolds extend to spaces with these unrestricted stratifications. 
We formulate the definitions of this section in this greater generality. We call such spaces unrestricted stratified pseudomanifolds, and we maintain the notation $U_k=X-X^{n-k}$ and $X_{n-k}=X^{n-k}-X^{n-k-1}=U_{k+1}-U_k$, the definition of strata, etc.
If $X$ is an unrestricted stratified pseudomanifold then so is each $X^m$ and each $X-X^m$, which is our primary reason for considering such spaces; this is not the case for the usual stratified pseudomanifolds. Of course all stratified pseudomanifolds are also unrestricted stratified pseudomanifolds.

Now that we have chosen our spaces, we need to extend our notion of perversity to include data on all strata. 

\begin{definition}\label{D: extended perv}
Let $X$ be an unrestricted stratified pseudomanifold.
Let an \emph{extended torsion-sensitive perversity} (or simply  \emph{extended ts-perversity}) be a function $\vec p: \{\text{strata of $X$}\}\to \Z\times \P(\primeset{P}(R))$.

Given a ts-perversity (Definition \ref{D: ts-perv}) and a ts-coefficient system $\mc E$ (Definition \ref{D: ts-coeff}) on a stratified pseudomanifold, we let $\vec p_{\mc E}$ denote the extended ts-perversity given by 
\begin{equation*}
\vec p_{\mc E}=
\begin{cases}
(0,\wp(Z,\mc E)),&\text{$Z$ a regular stratum},\\
\vec p(Z),&\text{$Z$ a singular stratum}.
\end{cases}
\end{equation*}
If $Y$ is a union of strata of $X$, we also write $\vec p$ for the restriction of $\vec p$ to the strata of $Y$.
\end{definition}

If $\vec p$ is an extended ts-perversity on $X$, then on each $X_{n-k}$ we have the $t$-structure $({}^{\vec p} D^{\leq 0}(X_{n-k}),{}^{\vec p} D^{\geq 0}(X_{n-k}))$ 
given in Remark \ref{R: components}; on individual strata, these are shifts of the $t$-structures defined in  Definition \ref{D: natural t} \cite[Remark 7.1.2]{BaIH}.
We claim that for each $k\geq 1$ the inclusions $U_k\xhookrightarrow{i} U^{k+1}\xhookleftarrow{j} X_{n-k}$ and the resulting functors $i_!,i^*=i^!,Ri_*,j^*, j_*=j_!,j^!$ among the derived categories\footnote{We here let $D^b_{\mf X}(U_k)$ denote the bounded derived category of complexes that are cohomologically constructible with respect to the stratification of $U_k$ induced from $X$, and similarly for the other subspaces. We also use the notation of \cite{Bo} in letting $j^!$ stand directly for the functor between derived categories.}   $D^b_{\mf X}(U_k)$, $D^b_{\mf X}(U_{k+1})$, and $D^b_{\mf X}(X_{n-k})$ provide gluing data; see \cite[Section 1.4]{BBD} or \cite[Theorem 7.2.2 and Section 7.3]{BaIH}. This will allow us to iteratively glue the $t$-structures over the various strata.

\begin{lemma}\label{L: gluing}
The functors $i_!,i^*=i^!,Ri_*,j^*, j_*=j_!,j^!$ among the derived categories   $D^b_{\mf X}(U_k)$, $D^b_{\mf X}(U_{k+1})$, and $D^b_{\mf X}(X_{n-k})$ provide gluing data.
\end{lemma}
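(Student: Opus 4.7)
The plan is to verify that the six functors between the three derived categories satisfy the axioms for a gluing situation in the sense of \cite[Section 1.4.3]{BBD} (or, equivalently, the recollement axioms as formulated in \cite[Chapter 7]{BaIH}). First, I would note that on the full bounded derived categories $D^b(U_k)$, $D^b(U_{k+1})$, and $D^b(X_{n-k})$ the required adjunctions $i_!\dashv i^*=i^!\dashv Ri_*$ and $j^*\dashv j_*=j_!\dashv j^!$ are classical consequences of $i$ being an open inclusion and $j$ being a closed inclusion; see \cite[Section V.1]{Bo}. The full faithfulness of $i_!, Ri_*, j_*=j_!$ and the vanishings $j^*i_!=0$ and $i^*j_*=0$ follow immediately from the fact that $j$ is closed with complement $U_k$. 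The required distinguished triangles
\[
i_!i^*A\to A\to j_*j^*A\xr{+1}\quad\text{and}\quad j_*j^!A\to A\to Ri_*i^*A\xr{+1}
\]
associated to any $A\in D^b(U_{k+1})$ are also standard from the open-closed decomposition.

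The substantive content of the lemma is therefore that these functors restrict to the subcategories of $\mf X$-cohomologically constructible complexes and still provide gluing data there. The key step is to verify that each functor preserves $\mf X$-cc-ness. For $i^*$ and $j^*$, which are simple restrictions, this is immediate since restriction preserves local constancy on strata and finite generation of stalks. For $i_!$ (extension by zero from an open subset) and for $j_*=j_!$ (extension by zero from a closed subset), constructibility is preserved because the stalks off of the image are zero and the local constancy on strata is unaffected.

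The functors $Ri_*$ and $j^!$ are the ones requiring more care. For $Ri_*$, I would cite Borel's \cite[Proposition V.3.10]{Bo}, which gives $\mf X$-clc-ness (and hence $\mf X$-cc-ness, once finite generation of stalks is checked via distinguished neighborhoods and the conic structure of the stratification). For $j^!$, since $j$ is a closed inclusion of a union of strata into the unrestricted stratified pseudomanifold $U_{k+1}$, one can compute $j^!$ via the distinguished triangle $j_*j^!A\to A\to Ri_*i^*A\xr{+1}$ and conclude $\mf X$-cc-ness of $j^!A$ from that of $A$ and $Ri_*i^*A$ (which is already known by the previous sentence); alternatively, one invokes \cite[Proposition V.3.10]{Bo} and $\mf X$-clc-ness of $j^!$ directly, together with the finite-generation provided by constructibility.

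Once constructibility preservation is verified, the recollement axioms transfer from $D^b$ to the cc-subcategories automatically, since adjunctions, distinguished triangles, and compositional identities are all inherited by full triangulated subcategories closed under the relevant functors. The main obstacle is the preservation of $\mf X$-cc-ness under $Ri_*$ and $j^!$; this ultimately rests on the fact that unrestricted stratified pseudomanifolds admit distinguished neighborhoods with a conic structure, so that stalks and costalks of $\mf X$-clc complexes can be computed via hypercohomology of links and cones, which remains finitely generated by an inductive argument on dimension (just as in the proof of Theorem \ref{T: cc}).
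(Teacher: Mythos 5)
Your overall strategy matches the paper's: reduce to checking that the adjunction/recollement machinery already present on the full derived categories restricts to the $\mf X$-cc subcategories, and observe that only $Ri_*$ and $j^!$ need attention. Your alternative route to $j^!$ via the distinguished triangle $j_*j^!\mc A\to\mc A\to Ri_*i^*\mc A$ is sound and arguably cleaner than the paper's costalk computation, since it piggybacks on the $Ri_*$ case rather than invoking \cite[Proposition V.3.10.d]{Bo} and Theorem \ref{T: ax equiv} separately.

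However, there is a genuine gap: you focus exclusively on preservation of $\mf X$-cc-ness and never address preservation of \emph{cohomological boundedness}, which is an independent condition and the more delicate one here. Being $\mf X$-cc (stalk cohomology locally constant on strata and finitely generated) does not by itself force the cohomology to be concentrated in finitely many degrees, and it is not obvious that $Ri_*$ and $j^!$ preserve boundedness: the new stalks and costalks are computed as hypercohomology of links and distinguished neighborhoods, and without some control these could be nonzero in arbitrarily many degrees. In fact your opening paragraph already tacitly assumes this, since to have gluing data on the $D^b$ categories at all one needs $Ri_*$ and $j^!$ to land in $D^b$ (the paper instead phrases the initial adjunction step for $D^+$, where there is no issue). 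The paper closes this gap with a spectral sequence argument: since a distinguished neighborhood $W$ (resp.\ a compact link $L$) has finite cohomological dimension, the $E_2^{p,q}\cong H^p_c(W;\mc H^q(\mc S^*))$ page vanishes for $|p+q|$ large when $\mc S^*$ is bounded, and hence the relevant hypercohomology, and therefore the stalk/costalk cohomology of $Ri_*\mc S^*$ and $j^!\mc S^*$, is bounded. Your final paragraph gestures at ``conic structure'' and ``inductive argument on dimension,'' but that phrasing only supports finite generation, not finiteness of the range of degrees, which is what actually has to be supplied.
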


\begin{proof}
The necessary adjunction properties hold already as functors among the $D^+(U_k)$, $D^+(U_{k+1})$, and $D^+(X_{n-k})$; see \cite[Section 7.2.1]{BaIH}. Therefore we need only show that these functors preserve boundedness and constructibility. This is clear for the restrictions and the extensions by $0$. For $Ri_*$ and $j^!$ these properties appear to be well known (e.g. they are implicit in \cite[Section 1.12]{GM2}). Detailed arguments can be made using the results in \cite[Section V.3]{Bo}, particularly \cite[V.3.3 and V.3.9-11]{Bo}, and the spectral sequence for hypercohomology with compact supports \cite[Section V.1.4]{Bo}.
\end{proof}

\begin{definition}
Let $X$ be an $n$-dimensional unrestricted stratified pseudomanifold and $\vec p$ an extended ts-perversity. Let $({}^{\vec p} D^{\leq 0}(X),{}^{\vec p} D^{\geq 0}(X))$ denote the $t$-structure on $D^b_{\mf X}(X)$ obtained by iterative gluing of the $t$-structures  $({}^{\vec p} D^{\leq 0}(X_{n-k}),{}^{\vec p} D^{\geq 0}(X_{n-k}))$ on $D^b_{\mf X}(X_{n-k})$ for each $k\geq 0$.  We call this the \emph{$\vec p$-perverse $t$-structure}. We denote the heart of this $t$-structure by 
$${}^{\vec p} D^{\heartsuit}(X) = {}^{\vec p} D^{\leq 0}(X)\cap{}^{\vec p} D^{\geq 0}(X).$$
If $\mc S\in {}^{\vec p} D^{\heartsuit}(X)$ for some $\vec p$, we call $\mc S$ \emph{ts-perverse}.
\end{definition}

We can describe the objects of $({}^{\vec p} D^{\leq 0}(X),{}^{\vec p} D^{\geq 0}(X))$ explicitly using the definition of the gluing procedure. Recall \cite[Theorem 7.2.2]{BaIH} that in general if we have gluing data  $U\xhookrightarrow{i} X\xhookleftarrow{j} F$ with $U$ open and $F=X-U$ and with t-structures $(D_U^{\leq 0},D_U^{\geq 0})$ and $(D_F^{\leq 0},D_F^{\geq 0})$ on $D(U)$ and $D(F)$, then the glued $t$-structure on $D(X)$ satisfies
\begin{align*}
D^{\leq 0}_X&=\{S\in D(X)\mid i^*S\in D_U^{\leq 0}, j^*S\in D_F^{\leq 0}\}\\
D^{\geq 0}_X&=\{S\in D(X)\mid i^*S\in D_U^{\geq 0}, j^!S\in D_F^{\geq 0}\}.
\end{align*}
Therefore,  recalling that $i^*=i^!$ when $i$ is an open inclusion, we have the following by an easy induction argument and the definitions:

The next proposition now follows directly from the definitions.

\begin{proposition}\label{P: perverse t conditions}
Suppose $\mc S\in D^b_{\mf X}(X)$. Then $\mc S\in {}^{\vec p} D^{\leq 0}(X)$ if and only if for each inclusion $j_k:X_{n-k}\into X$, $k\geq 0$, we have $j_k^*\mc S\in {}^{\vec p} D^{\leq 0}(X_{n-k})$, and 
$\mc S\in {}^{\vec p} D^{\geq 0}(X)$ if and only if for each inclusion $j_k:X_{n-k}\into X$ we have $j_k^!\mc S\in {}^{\vec p} D^{\geq 0}(X_{n-k})$. In particular,

\begin{enumerate}
\item $\mc S\in {}^{\vec p} D^{\leq 0}(X)$ if and only if the following holds for all $x\in X$: if $x$ is contained in the stratum  $Z\subset X_{n-k}$ then $H^i(\mc S_x)=0$ if $i>\vec p_1(Z)+1$ and $H^{\vec p_1(Z)+1}(\mc S_x)$ is $\vec p_2(Z)$-torsion.

\item $\mc S\in {}^{\vec p} D^{\geq 0}(X)$ if and only if the following holds for all $x\in X$: if $x$ is contained in the stratum  $Z\subset X_{n-k}$ and $j_{k}:X_{n-k}\into X$ is the inclusion then $H^i((j_k^!\mc S)_x)=0$ if $i<\vec p_1(Z)$ and $H^{\vec p_1(Z)}((j_k^!\mc S)_x)$ is $\vec p_2(Z)$-torsion free. 
\end{enumerate}
\end{proposition}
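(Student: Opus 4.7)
The plan is to unwind the definitions through the iterative gluing that defines $({}^{\vec p} D^{\leq 0}(X), {}^{\vec p} D^{\geq 0}(X))$, using nothing beyond the general formula for a glued $t$-structure recalled just before the proposition. First, by induction on the depth of $X$ and the general formulas for glued $t$-structures (\cite[Theorem 7.2.2]{BaIH}), I will show that $\mc S^*\in {}^{\vec p} D^{\leq 0}(X)$ is equivalent to $j_k^*\mc S^* \in {}^{\vec p} D^{\leq 0}(X_{n-k})$ for every $k\geq 0$, and that $\mc S^* \in {}^{\vec p} D^{\geq 0}(X)$ is equivalent to $j_k^!\mc S^* \in {}^{\vec p} D^{\geq 0}(X_{n-k})$ for every $k\geq 0$. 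The gluing tower actually produces conditions expressed in terms of the inclusions $\widetilde j_k: X_{n-k}\into U_{k+1}$, but since $U_{k+1}\into X$ is an open inclusion one has $j_k^! = \widetilde j_k^!\circ (-)|_{U_{k+1}}$, so rephrasing in terms of the global inclusions $j_k$ is harmless.

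Next, I will invoke Remark \ref{R: components}: a complex on the stratum space $X_{n-k}=\amalg_Z Z$ belongs to ${}^{\vec p} D^{\leq 0}(X_{n-k})$ (respectively ${}^{\vec p} D^{\geq 0}(X_{n-k})$) if and only if its restriction to each connected stratum $Z\subset X_{n-k}$ lies in ${}^{\vec p_2(Z)} D^{\leq \vec p_1(Z)}(Z)$ (respectively ${}^{\vec p_2(Z)} D^{\geq \vec p_1(Z)}(Z)$). Applying Definition \ref{D: natural t}, together with the shift by $\vec p_1(Z)$ (cf.\ \cite[Remark 7.1.2]{BaIH}), converts this into a pointwise condition on the stalk cohomology at each $x\in Z$: namely, $H^i = 0$ for $i>\vec p_1(Z)+1$ with $\vec p_2(Z)$-torsion cohomology at degree $\vec p_1(Z)+1$ in the first case, and $H^i=0$ for $i<\vec p_1(Z)$ with $\vec p_2(Z)$-torsion-free cohomology at degree $\vec p_1(Z)$ in the second case.

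For statement (1), since $j_k^*\mc S^*$ has stalks $\mc S^*_x$ at points $x\in Z\subset X_{n-k}$, the translated condition is exactly what appears in the proposition. Statement (2) follows by the same reasoning, applied to $j_k^!\mc S^*$ in place of $j_k^*\mc S^*$; here there is no additional simplification because $j_k$ is a closed inclusion, so $j_k^!$ is left in place in the final formulation.

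The only real subtlety in the argument is bookkeeping during the iterative gluing: confirming that the successive glued $t$-structures reduce stratum-by-stratum to the pointwise description I have isolated, and verifying the identification of the $j_k^!$ appearing in the statement (an inclusion into $X$) with the local $\widetilde j_k^!$ appearing in each individual gluing step (an inclusion into $U_{k+1}$). Both of these are handled by the open-restriction identity above, so I do not anticipate any genuine obstacle; the whole argument is essentially a translation of Definition \ref{D: natural t} through the gluing machinery of Lemma \ref{L: gluing} and the componentwise description of Remark \ref{R: components}.
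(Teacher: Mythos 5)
Your proposal is correct and is essentially the paper's own argument: the paper likewise observes that iterating the gluing formula (with $i^*=i^!$ for open inclusions) reduces membership in ${}^{\vec p}D^{\leq 0}(X)$ and ${}^{\vec p}D^{\geq 0}(X)$ to the conditions $j_k^*\mc S^*\in {}^{\vec p}D^{\leq 0}(X_{n-k})$ and $j_k^!\mc S^*\in {}^{\vec p}D^{\geq 0}(X_{n-k})$ for all $k$, and then invokes Remark \ref{R: components} and Definition \ref{D: natural t}. Your explicit handling of the identification $j_k^!=\widetilde j_k^!\circ(-)|_{U_{k+1}}$ is the same bookkeeping the paper leaves implicit.
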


If $\vec p_2(Z)=\emptyset$ for all $Z$ then ${}^{\vec p} D^{\heartsuit}(X)$ is the standard perverse $t$-structure \cite[page 158]{BaIH}.

\begin{remark}
It is an easy exercise, though mildly messy to write down, to show that the order of the inductive gluing does not matter. In other words, given t-structures on each $X_{n-k}$, whether we start with $U_1=X_n$ and then successively glue on $X_{n-1}$, $X_{n-2}$, etc. or if we start with $X^0=X_0$ and successively glue on $X_1$, $X_2$, etc., we arrive at the same conditions stated in Proposition \ref{P: perverse t conditions} for a t-structure on $X$. Alternatively, as in \cite[Section 2.1]{BBD}, we could take the conditions of Proposition \ref{P: perverse t conditions} to be the definition of $({}^{\vec p} D^{\leq 0}(X),{}^{\vec p} D^{\geq 0}(X))$ and then observe analogously to \cite[Proposition 2.1.3]{BBD} that for any $k>0$ this $t$-structure is obtained by gluing those defined inductively on $X^{n-k}$ and $U_k=X-X^{n-k}$. 
\end{remark}

\subsection{ts-Deligne sheaves as perverse sheaves}
 
In this section $X$ is a stratified pseudomanifold in the usual sense. 
We show that our ts-Deligne sheaves of Section \ref{S: TSD} are ts-perverse. 
Unfortunately, this involves some shifting of the perversities on the singular strata, which is a well-known issue; see, e.g., \cite[page 170]{BaIH} or \cite[pages 60-61]{BBD}. So we need some notation for this.

\begin{definition}
If $\vec p$ is an extended ts-perversity (Definition \ref{D: extended perv})  on the stratified pseudomanifold $X$, let 
$\vec p^{\,+}:\{\text{strata of }X\}\to \Z\times \P(\primeset{P}(R))$ be the extended ts-perversity given by
\begin{equation*}
\vec p^{\,+}(Z)= 
\begin{cases}
(\vec p_1(Z),\vec p_2(Z)),&\text{$Z$ a regular stratum},\\
(\vec p_1(Z)+1,\vec p_2(Z)),&\text{$Z$ a singular stratum}.\\
\end{cases}
\end{equation*}

In particular, if $\vec p$ is a ts-perversity (Definition \ref{D: ts-perv}) and $\mc E$ is a ts-coefficient system  on $U_1$, then $\vec p_{\mc E}^{\,+}$ is given by 
\begin{equation*}
\vec p_{\mc E}^{\,+}(Z)= 
\begin{cases}
(0,\wp(Z,\mc E)),&\text{$Z$ a regular stratum},\\
(\vec p_1(Z)+1,\vec p_2(Z)),&\text{$Z$ a singular stratum}.\\
\end{cases}
\end{equation*}
\end{definition}

\begin{remark}
Our notation $\vec p^{\,+}$ should be distinguished from the notation $p^+$ with a different meaning in \cite[Section 3.3]{BBD}.
\end{remark}

We can now show that ts-Deligne sheaves are ts-perverse sheaves. In fact, analogously to the classical case, they can be realized as intermediate extensions \cite[Section 1.4]{BBD}.

\begin{proposition}\label{P: deligne is ie}
Let $R$ be a PID, $X$ an $n$-dimensional stratified pseudomanifold, and $\vec p$ a ts-perversity. 
Let $\mc E$ be a ts-coefficient system on $U_1=X-X^{n-1}$, let $u:U_1\into X$ be the inclusion, and let $u_{!*}$ be the intermediate extension functor ${}^{\vec p_{\mc E}^{\,+}} D^{\heartsuit}(U_1)\to {}^{\vec p_{\mc E}^{\,+}} D^{\heartsuit}(X)$. 
Then the ts-Deligne sheaf $\ms P_{X,\vec p,\mc E}$ is isomorphic to $u_{!*}(\mc E)$ in ${}^{\vec p_{\mc E}^{\,+}} D^{\heartsuit}(X)$.
\end{proposition}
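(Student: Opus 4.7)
The plan is to verify directly that $\ms P^*_{X,\vec p,\mc E^*}$ satisfies the standard characterization of the intermediate extension: for an open embedding $u:U\hookrightarrow X$ with closed complement $i:F\hookrightarrow X$ and $\mc F\in {}^p D^\heartsuit(U)$, the intermediate extension $u_{!*}\mc F$ is the unique object $\mc G\in {}^p D^\heartsuit(X)$ such that $u^*\mc G\cong \mc F$, $i^*\mc G\in {}^p D^{\leq -1}(F)$, and $i^!\mc G\in {}^p D^{\geq 1}(F)$. Taking $U=U_1$ and $F=X^{n-1}$ with the t-structure ${}^{\vec p_{\mc E^*}^{\,+}} D^\bullet$, four things must be checked: that $\mc E^*\in {}^{\vec p_{\mc E^*}^{\,+}} D^\heartsuit(U_1)$, that $u^*\ms P^*\cong \mc E^*$, that $\ms P^*\in {}^{\vec p_{\mc E^*}^{\,+}} D^\heartsuit(X)$, and the closed-complement support/cosupport inequalities on $X^{n-1}$.

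The first is immediate: on each regular stratum $Z$ the value $\vec p_{\mc E^*}^{\,+}(Z)=(0,\wp(Z,\mc E^*))$ is chosen precisely so that the stalk conditions of a ts-coefficient system (Definition \ref{D: ts-coeff}) coincide with membership in the heart of the natural $\wp(Z,\mc E^*)$-t-structure (Remark \ref{R: natural heart}). The second is axiom TAx1\eqref{A: coeffs} from Theorem \ref{T: axioms}.

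For the remaining conditions I would use the stratumwise description of the glued t-structure from Proposition \ref{P: perverse t conditions}. On a singular stratum $Z\subset X_{n-k}$ we have $\vec p_{\mc E^*}^{\,+}(Z)=(\vec p_1(Z)+1,\vec p_2(Z))$, so the condition $\ms P^*|_Z\in {}^{\vec p_{\mc E^*}^{\,+}} D^{\leq -1}(Z)$ unpacks to requiring $H^i(\ms P^*_x)=0$ for $i>\vec p_1(Z)+1$ with $H^{\vec p_1(Z)+1}(\ms P^*_x)$ being $\vec p_2(Z)$-torsion, which is axiom TAx1\eqref{A: truncate}. Similarly the condition $j_Z^!\ms P^*\in {}^{\vec p_{\mc E^*}^{\,+}} D^{\geq 1}(Z)$ for $j_Z:Z\hookrightarrow X$ unpacks to $H^i((j_Z^!\ms P^*)_x)=0$ for $i\leq \vec p_1(Z)+1$ with $H^{\vec p_1(Z)+2}((j_Z^!\ms P^*)_x)$ being $\vec p_2(Z)$-torsion-free; this is axiom TAx1'\eqref{A': attach} after rewriting $H^j((j_Z^!\ms P^*)_x)\cong H^{j+n-k}(f_x^!\ms P^*)$ via \cite[Proposition V.3.7.b]{Bo}, exactly as in the proof of Theorem \ref{T: ax equiv}. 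Running through each codimension, these stratumwise conclusions give the required $i^*\ms P^*\in {}^{\vec p_{\mc E^*}^{\,+}} D^{\leq -1}(X^{n-1})$ and $i^!\ms P^*\in {}^{\vec p_{\mc E^*}^{\,+}} D^{\geq 1}(X^{n-1})$. They also imply $\ms P^*\in {}^{\vec p_{\mc E^*}^{\,+}} D^\heartsuit(X)$: the heart conditions on singular strata are strictly weaker than the two conditions just verified, and on regular strata $j_Z^!=j_Z^*$ reduces them to the first point.

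The main obstacle is simply the bookkeeping induced by the $^{\,+}$ convention and by the translation between the point-level axioms TAx1'\eqref{A': attach} (using $f_x^!$) and the stratum-level t-structure conditions (using $j_Z^!$); no genuinely new computation is needed beyond the axiomatic characterization already established. Once all four conditions are confirmed, the uniqueness of the intermediate extension in the heart of a glued t-structure (a standard consequence of the BBD gluing formalism \cite{BBD}) yields the desired isomorphism $\ms P^*_{X,\vec p,\mc E^*}\cong u_{!*}(\mc E^*)$.
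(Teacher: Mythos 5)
Your argument is correct, and it reaches the conclusion by a somewhat different route than the paper. The paper factors $u$ as the composite of the single-step open inclusions $U_k\into U_{k+1}$, invokes the transitivity $(ij)_{!*}=i_{!*}j_{!*}$, and applies the uniqueness characterization of \cite[Corollary 1.4.24]{BBD} once per step, so that the closed complement at each stage is the single stratum-dimension $X_{n-k}$ with its natural (shifted) t-structure; the induction then reduces everything to the support computation of axiom TAx1\eqref{A: truncate} and the cosupport computation of Lemma \ref{A''}. You instead apply the characterization once, for the decomposition $U_1\into X\hookleftarrow X^{n-1}$, and unpack the resulting conditions on $X^{n-1}$ stratum by stratum via Proposition \ref{P: perverse t conditions}. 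The core computations are identical in both versions (the axioms, plus the translation between $f_x^!$ and $j_Z^!$ from the proof of Theorem \ref{T: ax equiv}), and your observation that heart membership is automatic from the two boundary conditions is sound. The one extra input your version needs, which you should make explicit, is that the $\vec p^{\,+}_{\mc E^*}$-perverse t-structure on $X$ — defined by iterated gluing one stratum-dimension at a time — coincides with the t-structure obtained by gluing ${}^{\vec p^{\,+}_{\mc E^*}}D^{\heartsuit}(U_1)$ against the (itself glued) t-structure on $X^{n-1}$, so that \cite[Corollary 1.4.24]{BBD} applies to the pair $(U_1,X^{n-1})$ and so that the conditions $i^*\ms P^*\in {}^{\vec p^{\,+}_{\mc E^*}}D^{\leq -1}(X^{n-1})$ and $i^!\ms P^*\in {}^{\vec p^{\,+}_{\mc E^*}}D^{\geq 1}(X^{n-1})$ really do decompose stratumwise as you claim. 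This order-independence of gluing is true, and the paper records it as an easy exercise in the remark following Proposition \ref{P: perverse t conditions}, but it is not free; the paper's induction sidesteps it by only ever gluing in the order used to define the t-structure. What your approach buys in exchange is that it dispenses with the transitivity formula $(ij)_{!*}=i_{!*}j_{!*}$ and with the explicit induction.
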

\begin{proof}
Recall that on unions of strata we abuse notation by letting  $\vec p_{\mc E}^{\,+}$ denote the extended ts-perversity obtained by restricting the domain of the original $\vec p_{\mc E}^{\,+}$.
For simplicity, we will write $\ms P_{X,\vec p,\mc E}$ as just $\ms P$ and $\vec p^{\,+}_{\mc E}$ as $\vec p^+$. 

By Theorem \ref{T: axioms}, the sheaf complex $\ms P$ satisfies the axioms  TAx1$(X,\vec p, \mc E)$, and 
Axiom \ref{A: coeffs} says that $u^*\ms P=\ms P|_{U_1}\cong \mc E$. So $\ms P$ is an extension of $\mc E$. Furthermore, we have  $\mc E\in {}^{\vec p^{\,+}} D^{\heartsuit}(U_1)$ from the definitions.

Now for the composition $ij$ of inclusions of locally closed subspaces it holds that $(ij)_{!*}=i_{!*}j_{!*}$ \cite[Equation 2.1.7.1]{BBD}. So to verify the claim that $\ms P\cong u_{!*}(\mc E)$, we can proceed by induction. Let $u^k:U_1\to U_{k}$  be the inclusion, and suppose we have that $\ms P|_{U_k}\cong u^k_{!*}(\mc E)$ for some $k\geq 1$ the base case $k=1$ being trivial. Let $i^k:U_k\into U_{k+1}$, with $U_{k+1}-U_k=X_{n-k}$, as usual. We wish to show that $\ms P|_{U_{k+1}}\cong i^k_{!*}(\ms P|_{U_k})$, from which it will follow that 
$$\ms P|_{U_{k+1}}\cong i^k_{!*}(\ms P|_{U_k})\cong i^k_{!*}u^k_{!*}(\mc E)\cong u^{k+1}_{!*}\mc E,$$
completing the proof by induction. Note that $u^{k+1}_{!*}\mc E$ is an object  of  ${}^{\vec p^{\,+}}D^{\heartsuit}(U_{k+1})$ by definition. 

Let $g_k:X_{n-k}\into U_{k+1}$. 
From the properties of intermediate extensions, $i^k_{!*}(\ms P|_{U_k})$ is the unique (up to isomorphism) extension of $\ms P|_{U_k}$ such that $g_k^*i^k_{!*}(\ms P|_{U_k})\in {}^{\vec p^{\,+}}D^{\leq -1}(X_{n-k})$ and $g_k^!i^k_{!*}(\ms P|_{U_k})\in {}^{\vec p^{\,+}}D^{\geq 1}(X_{n-k})$ \cite[Corollary 1.4.24]{BBD}. So it suffices to verify that $g_k^*(\ms P|_{U_{k+1}})\in {}^{\vec p^{\,+}}D^{\leq -1}(X_{n-k})$ and $g_k^!(\ms P|_{U_{k+1}})\in {}^{\vec p^{\,+}}D^{\geq 1}(X_{n-k})$. 

First consider $g_k^*(\ms P|_{U_{k+1}})=\ms P|_{X_{n-k}}$. 
By Axiom TAx1'\ref{A: truncate}, if $x\in Z\subset  X_{n-k}$ for $Z$ a stratum, then $H^i(\ms P_x)=0$ for $i>\vec p_1(Z)+1=\vec p^{\,+}_1(Z)$ and $H^{\vec p_1(Z)+1}(\ms P_x)=H^{\vec p^{\,+}_1(Z)}(\ms P_x)$ is $\vec p_2(Z)$-torsion. So $g_k^*(\ms P|_{U_{k+1}})\in {}^{\vec p^{\,+}}D^{\leq -1}(X_{n-k})$ by Remark \ref{R: components}.

Continuing to assume $x\in Z\subset  X_{n-k}$, we next consider $g_k^!(\ms P|_{U_{k+1}})$. As $w_{k+1}:U_{k+1}\into X$ is an open inclusion, we have $$g_k^!(\ms P|_{U_{k+1}})=g_k^!w_{k+1}^*\ms P=g_k^!w_{k+1}^!\ms P=(w_{k+1}g_k)^!\ms P.$$ So if we let $j_k$ denote the inclusion $j_k:X_{n-k}\into X$ then $g_k^!(\ms P|_{U_{k+1}})=j_k^!\ms P$. 
Lemma \ref{A''} now implies that since $\ms P$ is $\mf X$-cc (Theorem \ref{T: cc}) and satisfies the axioms  TAx1'$(X,\vec p, \mc E)$, it also satisfies $H^i((j_k^!\ms P)_x)=0$ for $i\leq \vec p_1(Z)+1=\vec p^{\,+}(Z)$  and $H^{\vec p_1(Z)+2}((j_k^!\ms P)_x)=H^{\vec p_1^{\,+}(Z)+1}((j_k^!\ms P)_x)$ is $\vec p_2(Z)$-torsion free. So $g_k^!(\ms P|_{U_{k+1}})\in {}^{\vec p^{\,+}}D^{\geq 1}(X_{n-k})$ by Remark \ref{R: components}. 

This completes the induction step and hence the proof.
\end{proof}

By the general properties of intermediate extensions \cite[page 55]{BBD}, we have the following:

\begin{corollary}\label{C: int ext}
$\ms P_{X,\vec p,\mc E}\cong u_{!*}(\mc E)$  is the unique (up to isomorphism) extension of $\mc E$ in ${}^{\vec p_{\mc E}^{\,+}} D^{\heartsuit}(X)$ possessing no nontrivial subobject or quotient object supported on $X^{n-1}$. 
Furthermore, the simple objects of ${}^{\vec p_{\mc E}^{\,+}} D^{\heartsuit}(X)$ are either of the form $\ms P_{X,\vec p,\mc E}$, for $\mc E$ a simple object among ts-coefficient systems, or $j_! \mc S^*$ for $\mc S^*$ simple in ${}^{\vec p_{\mc E}^{\,+}} D^{\heartsuit}(X^{n-1})$.
\end{corollary}

\subsection{Duality}\label{S: perv duality}

In this section we consider duality, once again allowing unrestricted stratified pseudomanifolds unless noted otherwise.
We first need a definition of dual perversity adapted to the ts-perverse setting, cf.\ Definition \ref{D: dual perv}:

\begin{definition}
Given an extended ts-perversity $\vec p$, we define the \emph{perverse dual extended ts-perversity $\D\vec p$} so that for a stratum $Z$,  
$(\D\vec p)(Z)=(\codim(Z)-\vec p_1(Z),D\vec p_2(Z))$, where $D\vec p_2(Z)$ continues to represent $\primeset{P}(R)-\vec p_2(Z)$, the complement of $\vec p_2(Z)$ in the set of primes (up to unit) of $R$. 
\end{definition}

Since we must always dualize and shift, we simplify the notation in this section as follows:

\begin{definition}
On an $n$-dimensional unrestricted stratified pseudomanifold, we define the shifted dualizing functor by $\ms D\mc S=\mc D\mc S[-n]$.
\end{definition}

\begin{remark}\label{R: dual p}
If $\vec p$ is a ts-perversity and $\mc E$ is a ts-coefficient system then $\D(\vec p^{\,+}_{\mc E})=(D\vec p)_{\ms D\mc E}^{\,+}$. Indeed, on the first component of these extended perversities, if $Z$ is regular both sides evaluate to $0$, while if $Z$ is singular we have 
\begin{multline*}
\D(\vec p^{\,+}_{\mc E})_1(Z)=\codim(Z)-\vec p^{\,+}_{\mc E,1}(Z)=\codim(Z)-\vec p_{1}(Z)-1\\
=\codim(Z)-2-\vec p_1(Z)+1=(D\vec p)^+_{\ms D\mc E,1}(Z).
\end{multline*}
On the second components, the agreement on singular strata is obvious, while we saw in Proposition \ref{P: dual coeff} that $\ms D\mc E$ is a ts-coefficient system with respect to the complementary set of primes to that of $\mc E$ on each regular stratum.
\end{remark}

Next we show that $\ms D$ takes ts-perverse sheaves to ts-perverse sheaves with respect to the dual perversity, beginning with the manifold case. Recall Definition \ref{D: natural t}.

\begin{lemma}\label{L: manifold dual}
Let $M$ be an $n$-manifold. If $\mc S\in {}^\wp D^{\leq m}(M)$, then $\ms D \mc S\in {}^{D\wp} D^{\geq -m}(M)$. If $\mc S\in {}^\wp D^{\geq m}(M)$, then $\ms D \mc S\in {}^{D\wp} D^{\leq -m}(M)$.
\end{lemma}
\begin{proof}
By \cite[Corollary V.8.7]{Bo}, if $\mc S$ is $\mf X$-cc then so is $\ms D\mc S$. Let $f_x:x\into M$. For any $\mc S$ we have 
 \begin{equation}\label{E: dual point}
 \mc H^i((\ms D\mc E)_x)\cong  \Hom(H^{-i}(\mc S_x),R)\oplus \Ext(H^{-i+1}(\mc S_x),R)
 \end{equation}
by \eqref{E: dual stalk} and using that $f_x^!\cong f_x^*[-n]$ on a manifold \cite[Proposition V.3.7.b]{Bo}. 
If $\mc S\in {}^\wp D^{\leq m}(M)$, then for all $x\in M$, $H^i(\mc S_x)=0$ for $i>m+1$ and $H^{m+1}(\mc S_x)$ is $\wp$-torsion. It follows that $H^i((\ms D\mc S)_x)=0$ if $i< -m$ and $H^{-m}((\ms D\mc S)_x)$ is $D\wp$-torsion free, i.e.\ $\mc D\mc S\in {}^{D\wp} D^{\geq -m}(M)$. Similarly, if $\mc S\in  {}^{\wp} D^{\geq m}(M)$, then $H^i(\mc S_x)=0$ if $i<m$ and $H^{m}(\mc S_x)$ is $\wp$-torsion free, and it follows that 
$H^i((\ms D\mc S)_x)=0$ if $i> -m+1$ and $H^{-m+1}((\ms D\mc S)_x)$ is $D\wp$-torsion. So $\ms D\mc S\in {}^{D\wp} D^{\leq -m}(M).$
\end{proof}

\begin{lemma}\label{L: dual strata}
Let $X$ be an $n$-dimensional unrestricted stratified pseudomanifold and $\vec p$ an extended ts-perversity on $X$. Let $j_k:X_{n-k}\into X$. 
If $j_k^*\mc S\in {}^{\vec p} D^{\leq 0}(X_{n-k})$ then  $j_k^!\ms D\mc S\in{}^{\D\vec p} D^{\geq 0}(X_{n-k})$, and if  $j_k^!\mc S\in{}^{\vec p} D^{\geq 0}(X_{n-k})$ then $j_k^*\ms D\mc S\in {}^{\D\vec p} D^{\leq 0}(X_{n-k})$.
\end{lemma}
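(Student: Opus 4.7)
The plan is to reduce to computing stalk cohomology of $\ms D\mc S^*$ at a point $x$ in a stratum $Z\subset X_{n-k}$, and then to read off the required vanishing and torsion conditions directly from those of the hypothesis.

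First I would invoke the base change identities $\mc D_{X_{n-k}}\circ j_k^* \cong j_k^!\circ \mc D_X$ and $\mc D_{X_{n-k}}\circ j_k^! \cong j_k^*\circ \mc D_X$, which give
\[ j_k^!\ms D\mc S^* \cong (\mc D_{X_{n-k}}\, j_k^*\mc S^*)[-n] \quad\text{and}\quad j_k^*\ms D\mc S^* \cong (\mc D_{X_{n-k}}\, j_k^!\mc S^*)[-n]. \]
For $x\in Z\subset X_{n-k}$ with inclusion $\ell_x:\{x\}\hookrightarrow X_{n-k}$, I use that $X_{n-k}$ is an $(n-k)$-manifold to apply Lemma \ref{L: nns}, obtaining $\ell_x^!\mc A \cong \ell_x^*\mc A[-(n-k)]$ for any $\mf X$-clc complex $\mc A$ on $X_{n-k}$; both $j_k^*\mc S^*$ and $j_k^!\mc S^*$ are $\mf X$-clc, the latter by \cite[Proposition V.3.10.d]{Bo}. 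Combining this with $\ell_x^*\mc D_{X_{n-k}}\cong \mc D_{\{x\}}\ell_x^!$ and the universal coefficient theorem for Verdier duality at a point \cite[Theorem 3.4.4]{BaIH}, and tracking shifts carefully, yields
\begin{align*}
H^i((j_k^!\ms D\mc S^*)_x) &\cong \Hom(H^{k-i}((j_k^*\mc S^*)_x),R) \oplus \Ext(H^{k-i+1}((j_k^*\mc S^*)_x),R),\\
H^i((j_k^*\ms D\mc S^*)_x) &\cong \Hom(H^{k-i}((j_k^!\mc S^*)_x),R) \oplus \Ext(H^{k-i+1}((j_k^!\mc S^*)_x),R),
\end{align*}
with finite generation of each stalk cohomology (needed so that $\Ext(\cdot,R)$ coincides with the torsion submodule) holding because we work in $D^b_{\mf X}(X)$.

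For the first implication, assume $j_k^*\mc S^*\in {}^{\vec p}D^{\leq 0}(X_{n-k})$, so $H^\ell((j_k^*\mc S^*)_x)$ vanishes for $\ell>\vec p_1(Z)+1$ and is $\vec p_2(Z)$-torsion for $\ell=\vec p_1(Z)+1$. For $i<(\D\vec p)_1(Z)=k-\vec p_1(Z)$, both $H^{k-i}$ and $H^{k-i+1}$ of $(j_k^*\mc S^*)_x$ either vanish or, in the boundary case $k-i=\vec p_1(Z)+1$, consist entirely of $\vec p_2(Z)$-torsion, so the $\Hom$ into $R$ is zero; hence $H^i((j_k^!\ms D\mc S^*)_x)=0$. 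At $i=k-\vec p_1(Z)$ the $\Hom$ summand is free and the $\Ext$ summand is $\vec p_2(Z)$-torsion, so both are $D\vec p_2(Z)$-torsion free, since $\vec p_2(Z)$ and $D\vec p_2(Z)$ partition $\primeset{P}(R)$. By Remark \ref{R: components}, this gives $j_k^!\ms D\mc S^*\in {}^{\D\vec p}D^{\geq 0}(X_{n-k})$.

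The second implication is symmetric: assuming $H^\ell((j_k^!\mc S^*)_x)$ vanishes for $\ell<\vec p_1(Z)$ and is $\vec p_2(Z)$-torsion free for $\ell=\vec p_1(Z)$, for $i>(\D\vec p)_1(Z)+1=k-\vec p_1(Z)+1$ both relevant stalk groups vanish, and at $i=k-\vec p_1(Z)+1$ the $\Hom$ summand is zero while $\Ext(H^{\vec p_1(Z)}((j_k^!\mc S^*)_x),R)$ equals the torsion of a $\vec p_2(Z)$-torsion-free module, hence is $D\vec p_2(Z)$-torsion. I expect the main obstacle to be keeping the shift and index bookkeeping straight; once the two displayed stalk formulas are established, the rest is a mechanical case analysis on $i$.
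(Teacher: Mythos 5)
Your proof is correct and follows essentially the same route as the paper's: both reduce the lemma to the two displayed stalk formulas by commuting Verdier duality past the restriction functors and applying the universal coefficient theorem for duality, and then carry out the identical case analysis on $i$ using that $\vec p_2(Z)$ and $D\vec p_2(Z)$ are complementary. The only cosmetic difference is the order of operations — you dualize at the level of $X_{n-k}$ and then restrict to the point (for which \cite[Proposition V.3.7.b]{Bo} is the more direct reference than Lemma \ref{L: nns} for $\ell_x^!\cong \ell_x^*[-(n-k)]$ on a manifold), while the paper first passes to the point via $H^i((j_k^!\mc A)_x)\cong H^{i+n-k}(f_x^!\mc A)$ and dualizes there.
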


\begin{proof}
Again by \cite[Corollary V.8.7]{Bo}, if $\mc S$ is $\mf X$-cc then so is $\mc D\mc S$, and the arguments of Lemma \ref{L: gluing} show that $j_k^*$ and $j_k^!$ take $D^b_{\mf X}(X)$ to $D^b_{\mf X}(X_{n-k})$. It suffices to consider the codimension $k$ strata $Z\subset X_{n-k}$ independently. Abusing notation consider $j_k:Z\into X$. 
Suppose $j_k^*\mc S\in {}^{\vec p} D^{\leq 0}(Z)={}^{\vec p_2(Z)} D^{\leq \vec p_1(Z)}(Z)$. Then by Lemma \ref{L: manifold dual}, $\mc Dj_k^*\mc S[-(n-k)]\cong j_k^!\mc D\mc S[-n+k]$ is in ${}^{D\vec p_2(Z)} D^{\geq -\vec p_1(Z)}(Z)$. So $j_k^!\mc D\mc S[-n]=j_k^!\ms D\mc S\in {}^{D\vec p_2(Z)} D^{\geq k-\vec p_1(Z)}(Z)={}^{\D\vec p} D^{\geq 0}(Z)$. 

The second case is completely analogous.
\end{proof}

\begin{theorem}\label{T: perverse duality}
Let $X$ be an $n$-dimensional unrestricted stratified pseudomanifold and $\vec p$ an extended ts-perversity on $X$.
If $\mc S\in {}^{\vec p} D^{\leq 0}(X)$ then  $\ms D\mc S\in{}^{\D\vec p} D^{\geq 0}(X)$, and if  $\mc S\in{}^{\vec p} D^{\geq 0}(X)$ then $\ms D\mc S\in {}^{\D\vec p} D^{\leq 0}(X)$. 
\end{theorem}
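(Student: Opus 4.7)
The plan is to deduce the theorem as a direct consequence of Lemma \ref{L: dual strata} combined with the stratum-by-stratum characterization of the glued $t$-structure established just after Lemma \ref{L: gluing}. Recall that by the gluing recipe, $\mc S^*\in {}^{\vec p} D^{\leq 0}(X)$ is equivalent to $j_k^*\mc S^*\in {}^{\vec p} D^{\leq 0}(X_{n-k})$ for every $k\geq 0$, and $\mc S^*\in {}^{\vec p} D^{\geq 0}(X)$ is equivalent to $j_k^!\mc S^*\in {}^{\vec p} D^{\geq 0}(X_{n-k})$ for every $k\geq 0$, where $j_k\colon X_{n-k}\hookrightarrow X$ is the inclusion. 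So to verify a global membership it suffices to verify the corresponding condition on each stratum tier.

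First I would handle the implication $\mc S^*\in {}^{\vec p} D^{\leq 0}(X)\Rightarrow \ms D\mc S^*\in {}^{\D\vec p} D^{\geq 0}(X)$. By the gluing characterization the hypothesis gives $j_k^*\mc S^*\in {}^{\vec p} D^{\leq 0}(X_{n-k})$ for every $k$, and then Lemma \ref{L: dual strata} immediately provides $j_k^!\ms D\mc S^*\in {}^{\D\vec p} D^{\geq 0}(X_{n-k})$ for every $k$; applying the characterization in the other direction then packages these into the statement $\ms D\mc S^*\in {}^{\D\vec p} D^{\geq 0}(X)$. The reverse implication is entirely symmetric: from $j_k^!\mc S^*\in {}^{\vec p} D^{\geq 0}(X_{n-k})$ for all $k$, the second half of Lemma \ref{L: dual strata} yields $j_k^*\ms D\mc S^*\in {}^{\D\vec p} D^{\leq 0}(X_{n-k})$ for all $k$, which reassembles to $\ms D\mc S^*\in {}^{\D\vec p} D^{\leq 0}(X)$.

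The only minor point to confirm before invoking the characterization is that $\ms D\mc S^*$ actually lives in $D^b_{\mf X}(X)$, so that the glued $t$-structure applies to it. Constructibility is preserved by Verdier duality in view of \cite[Corollary V.8.7]{Bo}, and the bounded-cohomology condition is preserved via the cohomological dimension and spectral sequence arguments that already appeared in the proof of Lemma \ref{L: gluing}. With this in hand, the proof amounts to little more than bookkeeping: there is no substantive obstacle, because the nontrivial content — the stalkwise dualization of the support and cosupport conditions via Verdier's universal coefficient formula, together with the interchange of $\vec p_2(Z)$-torsion and $D\vec p_2(Z)$-torsion-free conditions — has already been extracted on each $X_{n-k}$ in Lemma \ref{L: dual strata}. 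Thus the proof is essentially a single invocation of that lemma tier-by-tier.
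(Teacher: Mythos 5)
Your proof is correct and follows essentially the same route as the paper: the paper's proof of Theorem \ref{T: perverse duality} is likewise a direct tier-by-tier application of Lemma \ref{L: dual strata} using the gluing characterization $\mc S^*\in {}^{\vec p}D^{\leq 0}(X)\Leftrightarrow j_k^*\mc S^*\in {}^{\vec p}D^{\leq 0}(X_{n-k})$ for all $k$ (and the analogous $j_k^!$ statement). Your remark about checking $\ms D\mc S^*\in D^b_{\mf X}(X)$ is a point the paper handles inside the proof of Lemma \ref{L: dual strata} rather than in the theorem proof itself, but the content is identical.
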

\begin{proof}
Let $j_k:X_{n-k}\into X$. If $\mc S\in {}^{\vec p} D^{\leq 0}(X)$ then by the gluing construction $j_k^*\mc S\in {}^{\vec p} D^{\leq 0}(X_{n-k})$ for all $k$. So by Lemma \ref{L: dual strata}, $j_k^!\ms D\mc S\in {}^{\D\vec p} D^{\geq 0}(X_{n-k})$ for all $k$. Thus $\ms D\mc S\in {}^{\D\vec p} D^{\geq 0}(X)$, by the definition of gluing. Similarly, if  $\mc S\in {}^{\vec p} D^{\geq 0}(X)$ then $j_k^!\mc S\in {}^{\vec p} D^{\geq 0}(X_{n-k})$ for all $k$. So by Lemma \ref{L: dual strata}, $j_k^*\ms D\mc S\in {}^{\D\vec p} D^{\leq 0}(X_{n-k})$ for all $k$. Thus $\ms D\mc S\in {}^{\D\vec p} D^{\leq 0}(X)$.
\end{proof}

\begin{corollary}\label{C: perverse duality}
The functor $\ms D: D^b_{\mf X}(X)\to D^b_{\mf X}(X)$ restricts to an equivalence of categories ${}^{\vec p} D^{\heartsuit}(X)\to {}^{\D\vec p} D^{\heartsuit}(X)^{opp}$, i.e.\ ${}^{\vec p} D^{\heartsuit}(X)$ and  ${}^{\D\vec p} D^{\heartsuit}(X)$ are dual categories.
\end{corollary}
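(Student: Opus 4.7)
The plan is to reduce the corollary to three short observations once Theorem~\ref{T: perverse duality} is in hand. First, I would note that Theorem~\ref{T: perverse duality} immediately implies that $\ms D$ carries the heart ${}^{\vec p} D^{\heartsuit}(X) = {}^{\vec p} D^{\leq 0}(X) \cap {}^{\vec p} D^{\geq 0}(X)$ into ${}^{\D\vec p} D^{\geq 0}(X) \cap {}^{\D\vec p} D^{\leq 0}(X) = {}^{\D\vec p} D^{\heartsuit}(X)$. Since $\mc D$, and therefore $\ms D$, is a contravariant functor on $D^b_{\mf X}(X)$, this defines a functor $\ms D \colon {}^{\vec p} D^{\heartsuit}(X) \to {}^{\D\vec p} D^{\heartsuit}(X)^{opp}$.

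Second, I would verify that $\D\D \vec p = \vec p$ directly from the definition: for each stratum $Z$ one has $\codim(Z) - (\codim(Z) - \vec p_1(Z)) = \vec p_1(Z)$ on the first component and $D(D\vec p_2(Z)) = \vec p_2(Z)$ on the second. Consequently, applying Theorem~\ref{T: perverse duality} with $\D\vec p$ in place of $\vec p$ gives a functor $\ms D \colon {}^{\D\vec p} D^{\heartsuit}(X) \to {}^{\vec p} D^{\heartsuit}(X)^{opp}$, which is a candidate quasi-inverse.

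Third, the only remaining ingredient is biduality: $\ms D \circ \ms D \cong \id$ on $D^b_{\mf X}(X)$. Using the paper's convention that shifts appear outside $\mc D$, one computes
\[
\ms D\ms D\mc S^* \;=\; \mc D\bigl(\mc D\mc S^*[-n]\bigr)[-n] \;\cong\; \bigl(\mc D\mc D\mc S^*\bigr)[n][-n] \;\cong\; \mc D\mc D\mc S^*,
\]
and $\mc D\mc D\mc S^* \cong \mc S^*$ naturally for any cohomologically constructible complex by Borel's biduality theorem \cite[Proposition V.8.10]{Bo}; every object of $D^b_{\mf X}(X)$ is cc by \cite[Proposition V.3.10.e]{Bo}. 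Combining the two functors and this natural isomorphism yields mutually inverse equivalences, establishing the corollary.

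I do not expect any serious obstacle: the real content of the duality has already been absorbed into Lemma~\ref{L: dual strata} and Theorem~\ref{T: perverse duality}, and the only point requiring mild care is tracking the signs of the shifts under the convention $\mc D\mc S^*[m] = (\mc D\mc S^*)[m]$ fixed in the introduction, which is routine.
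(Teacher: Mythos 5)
Your proposal is correct and follows essentially the same route as the paper: apply Theorem~\ref{T: perverse duality} to see that $\ms D$ maps the heart into the dual heart, and then invoke Borel's biduality $\mc D\mc D\cong\id$ (after cancelling the shifts) to conclude it is an equivalence. Your explicit checks that $\D\D\vec p=\vec p$ and that the shifts cancel under the paper's convention are details the paper leaves implicit, but the argument is the same.
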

\begin{proof}
The preceding theorem shows that $\ms D$  takes ${}^{\vec p} D^{\heartsuit}(X)$ to ${}^{\D\vec p} D^{\heartsuit}(X)$. Applying $\ms D$ twice gives $\mc D\mc D\mc S$. By \cite[Theorem V.8.10]{Bo}, $\mc D\mc D$ is isomorphic to the identity (the statement in \cite{Bo} is about objects, but the proof shows that $BD_X:\id\to \mc D\mc D$ is a natural transformation).
\end{proof}

The next corollary could be taken as a consequence of Theorem \ref{T: perverse duality} and our ts-Deligne sheaf duality theorem (Theorem \ref{T: duality}). Rather, we will prove it directly and then observe that it provides an alternate proof of Theorem \ref{T: duality}.

\begin{corollary}\label{C: ext dual}
Let $X$ be an $n$-dimensional unrestricted stratified pseudomanifold. Let $\mc E$ be a ts-coefficient system on $U_1$, let $\vec p$ be a ts-perversity, and let $u:U_1\into X$ be the inclusion.
Let $u_{!*}\mc E$ be the intermediate extension of $\mc E$ in  ${}^{\vec p^{\,+}_{\mc E}}D^{\heartsuit}(X)$ and let $u_{!*}\ms D\mc E$ be the intermediate extension of $\ms D\mc E$ in  ${}^{\D(\vec p^{\,+}_{\mc E})}D^{\heartsuit}(X)={}^{(D\vec p)^+_{\ms D\mc E}} D^{\heartsuit}(X)$. 
Then $\ms Du_{!*}\mc E\cong u_{!*}\ms D\mc E$ in ${}^{\D(\vec p^{\,+}_{\mc E})}D^{\heartsuit}(X)$. 
\end{corollary}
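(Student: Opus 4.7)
The plan is to identify $\ms D u_{!*}\mc E^*$ with $u_{!*}\ms D\mc E^*$ by showing the former satisfies the stalk/costalk conditions uniquely characterizing the intermediate extension in the target heart, exactly as used in the proof of Proposition \ref{P: deligne is ie}. Three items must be verified: (i) $\ms D u_{!*}\mc E^*$ lies in ${}^{\D(\vec p^{\,+}_{\mc E^*})}D^{\heartsuit}(X)$, (ii) $u^*\ms D u_{!*}\mc E^*\cong \ms D\mc E^*$, and (iii) for every $k\geq 1$ and every inclusion $j_k:X_{n-k}\into X$, the relations $j_k^*\ms D u_{!*}\mc E^*\in {}^{\D(\vec p^{\,+}_{\mc E^*})}D^{\leq -1}(X_{n-k})$ and $j_k^!\ms D u_{!*}\mc E^*\in {}^{\D(\vec p^{\,+}_{\mc E^*})}D^{\geq 1}(X_{n-k})$ hold. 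By Remark \ref{R: dual p}, the target heart is indeed ${}^{(D\vec p)^+_{\ms D\mc E^*}}D^{\heartsuit}(X)$.

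Item (i) is immediate from Corollary \ref{C: perverse duality}. For (ii), since $u:U_1\into X$ is an open inclusion, $u^*=u^!$ commutes with Verdier duality: the sheaf-$\text{\emph{Hom}}$ computation verifying Axiom TAx1'(2) in the proof of Theorem \ref{T: duality} establishes precisely that $u^*\mc D_X\cong \mc D_{U_1}u^*$, so $u^*\ms D u_{!*}\mc E^*\cong \ms D u^*u_{!*}\mc E^*\cong \ms D\mc E^*$, using the defining property $u^*u_{!*}\mc E^*\cong \mc E^*$ of intermediate extensions. For (iii), the proof of Proposition \ref{P: deligne is ie} already established that $j_k^* u_{!*}\mc E^*\in {}^{\vec p^{\,+}_{\mc E^*}}D^{\leq -1}(X_{n-k})$ and $j_k^! u_{!*}\mc E^*\in {}^{\vec p^{\,+}_{\mc E^*}}D^{\geq 1}(X_{n-k})$. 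Shifting by $[-1]$ and $[1]$ respectively to pass into the $D^{\leq 0}/D^{\geq 0}$ framework of Lemma \ref{L: dual strata}, applying that lemma, and then shifting back (using that $\ms D$ converts $[m]$ into $[-m]$), we obtain the required dual inclusions. The uniqueness invoked in the inductive step of the proof of Proposition \ref{P: deligne is ie} then identifies $\ms D u_{!*}\mc E^*$ with $u_{!*}\ms D\mc E^*$.

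The main technical care required is the bookkeeping of shifts in step (iii) — converting Lemma \ref{L: dual strata}'s $D^{\leq 0}/D^{\geq 0}$ statement into the $D^{\leq -1}/D^{\geq 1}$ form needed for intermediate extensions, and doing so uniformly across every singular stratum as the inductive gluing is performed. As noted in Remark \ref{R: other dual}, this corollary combined with Theorem \ref{T: perverse duality} yields an independent proof of Theorem \ref{T: duality}, so it is worth checking that the argument above does not tacitly invoke Theorem \ref{T: duality} itself: the only nontrivial duality input is the open-inclusion identity $u^*\mc D_X\cong \mc D_{U_1}u^*$ and Lemma \ref{L: dual strata}, neither of which uses the full duality statement for ts-Deligne sheaves.
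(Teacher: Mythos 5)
Your proposal is correct and follows essentially the same route as the paper: both arguments rest on the commutation of $\ms D$ with restriction to the open set $U_1$, the BBD characterization of the intermediate extension via the $D^{\leq -1}/D^{\geq 1}$ conditions on the closed complement, and Lemma \ref{L: dual strata} with the same shift bookkeeping. The only (immaterial) organizational difference is that the paper verifies the conditions one stratum at a time via an induction on depth using $(ij)_{!*}=i_{!*}j_{!*}$, whereas you check all strata at once and invoke uniqueness for the single inclusion $u$, which is equivalent by the gluing description of the t-structure on $X^{n-1}$.
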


\begin{proof}
We recall that $\D(\vec p^{\,+}_{\mc E})=(D\vec p)^+_{\ms D\mc E}$ by Remark \ref{R: dual p}, so that we do in fact have ${}^{\D(\vec p^{\,+}_{\mc E})}D^{\heartsuit}(X)={}^{(D\vec p)^+_{\ms D\mc E}} D^{\heartsuit}(X)$. Furthermore, $\ms Du_{!*}\mc E$ is in this category by Theorem \ref{T: perverse duality}.

As $u$ is an open inclusion $u^*=u^!$, so using \cite[Theorem V.10.17]{Bo} we have $u^*\ms Du_{!*}\mc E\cong \ms Du^*u_{!*}\mc E\cong \ms D\mc E$. Thus $\ms Du_{!*}\mc E$ is an extension of $\ms D\mc E$. If $\ms Du_{!*}\mc E$ has a nontrivial subobject or quotient object supported on $X^{n-1}$, then it follows from Corollary \ref{C: perverse duality} that the same must be true of $\ms D(\ms Du_{!*}\mc E)\cong u_{!*}\mc E$ in ${}^{\vec p^{\,+}_{\mc E}}D^{\heartsuit}(X)$ since equivalences of categories are exact functors and $\ms D\ms D\cong \id$ implies that the dual of a nontrivial sub- or quotient object is also nontrivial. It is also easy to see from \eqref{E: dual point} that the dual of an object supported on $X^{n-1}$ is supported on $X^{n-1}$. But by Corollary \ref{C: int ext}, 
$u_{!*}\mc E$ possesses no nontrivial subobject or quotient object supported on $X^{n-1}$, so the same is true of 
$\ms Du_{!*}\mc E$. The corollary now follows from the uniqueness statement of Corollary \ref{C: int ext}. 
\end{proof}

\begin{remark}\label{R: other dual}
Together, Corollary \ref{C: ext dual} and Proposition \ref{P: deligne is ie} give an alternative proof of our ts-Deligne sheaf duality theorem (Theorem \ref{T: duality}). The argument here is perhaps more conceptual, though of course the trade-off is that the proof in this section requires some arguably much more sophisticated machinery. 
\end{remark}

\subsection{Chain conditions}\label{S: chain cond}

It is well-known that the classical category of perverse sheaves is both Noetherian and Artinian when working with field coefficients on a complex variety \cite[Theorem 4.3.1]{BBD}. In other words, for any such perverse sheaf $\mc S$, every ascending or descending chain of subobjects eventually stabilizes. Unfortunately, this is not generally true for ts-perverse sheaves, as the following illustrative example demonstrates.

\begin{example}\label{E: point noeth}
Let $X=pt$ with $R=\Z$. In this case $\mf X$-cc sheaf complexes are just chain complexes whose cohomology modules are finitely generated abelian groups, and $D(X)$ is the corresponding derived category of such chain complexes. Consider the exact sequence of abelian groups 
\begin{equation}\label{E: SES p}
0\to \Z\xr{p^k} \Z\to \Z_{p^k}\to 0
\end{equation}
 for a prime $p\in \Z$. Such an exact sequence yields a distinguished triangle 
\begin{equation*}
\to \Z\xr{p^k} \Z\to \Z_{p^k}\xr{[1]}
\end{equation*}
in $D^b_{\mf X}(X)$ \cite[page 51]{BaIH}, treating each group as a complex that is nontrivial only in degree $0$. Applying the cohomology functor ${}^\wp H^0=\ttau^\wp_{\geq 0}\ttau^\wp_{\leq 0}$ will result in an exact sequence in the heart ${}^\wp D^{\heartsuit}(X)$ \cite[Proposition 7.1.12]{BaIH}. For an arbitrary sheaf complex $\mc S$ on $X$ (in this case just a complex of abelian groups), we have from our definitions that ${}^\wp H^0(\mc S)= \ttau^\wp_{\leq 0}\mc S/\ttau^\wp_{\leq -1}\mc S$.  Using the notation of Section  \ref{S: TTT}, we have $W^{\wp}\Z=0$, while $W^{\wp}\Z_{p^k}=0$ if $p\notin \wp$ and $W^{\wp}\Z_{p^k}=\Z_{p^k}$ if $p\in \wp$, as in the latter case for each $z\in \Z_{p^k}$ we have that $p^kz=0$ is a boundary. 

So if we first suppose that $p\notin \wp$, then ${}^\wp H^0(\Z)\cong\ttau^\wp_{\leq 0}\Z/\ttau^\wp_{\leq -1}\Z\cong \Z/0=\Z$. Similarly ${}^\wp H^0(\Z_{p^k})=\Z_{p^k}$, and all other cohomology groups are $0$. So we obtain an exact sequence of the form \eqref{E: SES p}   in ${}^{\wp} D^{\heartsuit}(X)$. In particular, if there are any primes not in $\wp$ then ${}^{\wp} D^{\heartsuit}(X)$  will not be Artinian as we can form the descending chain determined by the inclusions $\cdots \to \Z\xr{p}\Z\xr{p}\Z$.

Similarly,  if $\wp\neq \emptyset$ the category ${}^{\wp} D^{\heartsuit}(X)$ is not Noetherian: Let $p\in \wp$, and consider the chain of inclusions $\cdots \to \Z\xr{p}\Z\xr{p}\Z$ from our previous example, now in ${}^{\D\wp} D^{\heartsuit}(X)$. 
Due to Corollary \ref{C: perverse duality}, if we apply $\ms D$ we obtain an infinite chain of quotients in ${}^{\vec p} D^{\heartsuit}(X)$. It is an interesting exercise to show that the corresponding chain of inclusions has the form $$\Z_p[-1]\into \Z_{p^2}[-1]\into \Z_{p^3}[-1]\into \cdots \into\Z.$$

On the other hand, when $\wp=\emptyset$, then ${}^{\emptyset} D^{\heartsuit}(X)$ is Noetherian: In this case the $t$-structure is the standard $t$-structure and  ${}^{\emptyset} D^{\heartsuit}(X)$ is equivalent to the category of finitely generated $R$-modules \cite[Example 7.1.5]{BaIH}. It is thus Noetherian, as PIDs are Noetherian. By Corollary \ref{C: perverse duality}, the dual category to ${}^{\emptyset} D^{\heartsuit}(X)$ is ${}^{\primeset{P}(R)} D^{\heartsuit}(X)$, which is therefore Artinian \cite[page 370]{Pop73}.
\end{example}

Although our example shows that ts-perverse sheaves are neither Noetherian nor Artinian in general, we do have the following, generalizing our example when $X$ is a point.

\begin{theorem}\label{T: noether}
Suppose $X$ is an $n$-dimensional unrestricted stratified pseudomanifold with a finite\footnote{The finite strata requirement is necessary. For example, suppose $X$ is an infinite number of disjoint points and $\vec p(x)=(0,\emptyset)$ for each point. Let $\Z_x$ be the unique sheaf complex on $X$ with stalk $\Z$ in degree $0$ at the point $x$ and otherwise trivial. Then $\oplus_{x\in X}\Z_x$ is $\mf X-cc$ and an object of ${}^{\vec p} D^{\heartsuit}(X)$, but it is not Noetherian as if we order the points we have $\Z_{x_1}\subset \Z_{x_1}\oplus \Z_{x_2}\subset \Z_{x_1}\oplus \Z_{x_2}\oplus \Z_{x_3}\subset \cdots \oplus_{x\in X}\subset \Z_x$.}
 number of strata, and let $\vec p$ be a ts-perversity. 
\begin{enumerate}
\item If $\vec p_2(Z)=\emptyset$ for each stratum $Z$ then ${}^{\vec p} D^{\heartsuit}(X)$ is Noetherian.

\item If $\vec p_2(Z)=\primeset{P}(R)$ for each stratum $Z$ then ${}^{\vec p} D^{\heartsuit}(X)$ is Artinian.
\end{enumerate}
\end{theorem}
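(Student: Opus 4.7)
The plan is to prove part (1) by induction along the filtration and then derive part (2) from Corollary \ref{C: perverse duality}.

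For part (2), observe that if $\vec p_2(Z) = \primeset{P}(R)$ for every stratum $Z$, then the perverse dual $\D\vec p$ satisfies $\D\vec p_2(Z) = \emptyset$ for every $Z$. By Corollary \ref{C: perverse duality}, the functor $\ms D$ induces an equivalence ${}^{\vec p} D^{\heartsuit}(X) \xr{\sim} \left({}^{\D\vec p} D^{\heartsuit}(X)\right)^{opp}$. Granted (1), the category ${}^{\D\vec p} D^{\heartsuit}(X)$ is Noetherian, so ${}^{\vec p} D^{\heartsuit}(X)$ is Artinian.

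For part (1), I induct on the steps of the iterative gluing, building ${}^{\vec p} D^{\heartsuit}(U_{k+1})$ from ${}^{\vec p} D^{\heartsuit}(U_k)$ and ${}^{\vec p} D^{\heartsuit}(X_{n-k})$ via Lemma \ref{L: gluing}. At each step, $X_{n-k}$ is a disjoint union of finitely many connected manifold strata by the finite-stratum hypothesis. When $\vec p_2 = \emptyset$, Remark \ref{R: natural heart} identifies ${}^{\vec p} D^{\heartsuit}(X_{n-k}) \cap D^b_{\mf X}(X_{n-k})$ (up to a shift by $\vec p_1$ on each component) with the category of locally constant sheaves of finitely generated $R$-modules, since the $\emptyset$-torsion condition forces the degree $1$ cohomology of any $\emptyset$-coefficient system to vanish. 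Subobjects of such a sheaf on a connected component correspond to $\pi_1$-stable submodules of its stalk, and any ascending chain of these stabilizes because the stalk is a finitely generated module over the Noetherian ring $R$; hence ${}^{\vec p} D^{\heartsuit}(X_{n-k})$ is Noetherian.

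For the inductive step, let $j: U_k \into U_{k+1}$ and $i: X_{n-k} \into U_{k+1}$ denote the open and closed embeddings. By the general theory of recollements of t-structures (cf.\ \cite[Section 1.4]{BBD}) applied to the gluing data of Lemma \ref{L: gluing}, $i_*$ is fully faithful on hearts and embeds ${}^{\vec p} D^{\heartsuit}(X_{n-k})$ as a Serre subcategory $\mc B$ of $\mc A := {}^{\vec p} D^{\heartsuit}(U_{k+1})$, with $j^*$ inducing an equivalence $\mc A/\mc B \simeq {}^{\vec p} D^{\heartsuit}(U_k)$. Moreover, $i_*$ admits a right adjoint $\rho := {}^{\vec p}H^0 \circ i^!$ between hearts, whose counit $i_* \rho(\mc T) \into \mc T$ is monic with image equal to the largest subobject of $\mc T$ lying in $\mc B$. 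Given an ascending chain $\mc S_1 \subset \mc S_2 \subset \cdots \subset \mc S$ in $\mc A$, I apply the exact functor $j^*$: by the induction hypothesis, the chain $(j^* \mc S_n)$ stabilizes at some index $n_0$. For $m \geq n_0$, the subobjects $\mc S_m/\mc S_{n_0}$ of $\mc S/\mc S_{n_0}$ satisfy $j^*(\mc S_m/\mc S_{n_0}) = 0$, so they lie in $\mc B$, and by the adjunction they are subobjects of $i_* \rho(\mc S/\mc S_{n_0})$. Since $\rho(\mc S/\mc S_{n_0}) \in {}^{\vec p} D^{\heartsuit}(X_{n-k})$ is Noetherian by the base case, the chain of $\mc S_m/\mc S_{n_0}$ inside $i_* \rho(\mc S/\mc S_{n_0})$ stabilizes, and hence so does the original chain.

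The main obstacle is verifying that the recollement machinery---in particular, the existence of the right adjoint $\rho = {}^{\vec p}H^0 \circ i^!$ between hearts with the stated counit property, and the identification $\mc A/\mc B \simeq {}^{\vec p} D^{\heartsuit}(U_k)$---transfers from the classical setting of \cite[Section 1.4]{BBD} to our torsion-sensitive gluing. Once Lemma \ref{L: gluing} provides the necessary adjunctions among the $\mf X$-constructible bounded derived categories, this is a formal consequence of the abstract t-structure theory, but some bookkeeping is required to confirm that ${}^{\vec p}H^0 i^! \mc T$ remains in $D^b_{\mf X}(X_{n-k})$ and to check that the counit is monic in the torsion-sensitive heart.
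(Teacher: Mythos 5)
Your proof is correct and uses a genuinely different induction from the paper's. Both derive (2) from (1) via Corollary~\ref{C: perverse duality}, and both reduce (1) to the manifold/local-system case through recollement. But the paper inducts on the \emph{dimension} of $X$: it applies the exact restriction ${}^{\vec p}i^*$ (to the open manifold $U_1$) to stabilize the chain there, observes the quotients $\mc B_k = \mc A_k/\mc A_1$ are supported on the closed skeleton $X^{n-1}$, and then invokes the dimension-induction hypothesis on $X^{n-1}$ --- which is one reason the paper needs to work with \emph{unrestricted} pseudomanifolds, since $X^{n-1}$ may fail the density condition. You instead induct over the open sets $U_k \subset U_{k+1}$, stabilizing on $U_k$ by the inductive hypothesis and then observing that the quotients $\mc S_m/\mc S_{n_0}$ are supported on the \emph{manifold} $X_{n-k}$, so the closed piece is always handled by the base case and no recursive pseudomanifold structure enters. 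The two proofs also differ in how they exploit the recollement: the paper uses the five-term exact sequence of \cite[Lemma 1.4.19]{BBD} and explicit Snake Lemma chases to show $\mc B_k \cong {}^{\vec p} j_*{}^{\vec p} j^!\mc B_k$ with ${}^{\vec p}j^!$ left exact; you use the adjunction $i_* \dashv {}^{\vec p}H^0 i^!$ and the maximal-subobject characterization of $i_*\rho(\mc T)$. These are equivalent facets of the recollement formalism, and once Lemma~\ref{L: gluing} supplies the gluing data the abstract theory of \cite[Section 1.4]{BBD} applies formally, so the caveat you flag at the end is prudent but not a genuine gap. In sum, your version trades the paper's concrete diagram chases for heavier reliance on abstract recollement structure, but with the payoff that at every inductive step the closed stratum is a manifold.
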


\begin{proof}
It suffices to prove the first statement, as the second then follows from Corollary \ref{C: perverse duality} and that the dual of a Noetherian category is Artinian \cite[page 370]{Pop73}. We provide an argument that we think is a bit different from the published proofs we could find in the field coefficient case, which simultaneously demonstrate the Noetherian and Artinian properties using an induction on length and properties of simple objects (cf.\ \cite[Theorem 4.3.1]{BBD}, \cite[Corollary 3.5.7]{Kiehl}, \cite[Proposition 19.10]{Bhatt15}). Since our categories won't be both Noetherian and Artinian in general, we don't have such a notion of length available.

First consider the case where $X=U_1$ is a connected manifold stratified trivially, and let $\mc S\in {}^{\vec p} D^{\heartsuit}(X)$. Since $\vec p_2(Z)=\emptyset$ for all strata, $\mc S$ is quasi-isomorphic to a local system of finitely-generated $R$-modules in degree 0 (Remark \ref{R: natural heart}). Thus the data is an $R$-module with extra structure (a $\pi_1$ action). As finitely-generated $R$-modules are Noetherian \cite[Proposition 10.1.4]{LANG}, so is $\mc S$.
If $X$ is a trivially-stratified manifold with any finite number of strata, we can proceed by induction, as a sheaf complex on a disjoint union of components is the direct sum of sheaf complexes supported on each component and $\mc A\cong \mc B\oplus \mc C$ implies there is an exact sequence $0\to \mc B\to \mc A\to \mc C\to 0$. This suffices as we recall that in any Abelian category an object is Noetherian if and only if all of its subobjects and quotient (image) objects are Noetherian \cite[Proposition 5.7.2]{Pop73}.

We now proceed by induction on dimension. The case of dimension $0$ is covered by the manifold case. So now suppose we have proven the statement for stratified pseudomanifolds of dimension $<n$, let $\dim(X)=n$, and suppose $\mc S\in {}^{\vec p} D^{\heartsuit}(X)$. Let $\mc A_1\subset \mc A_2\subset \cdots \subset \mc S$ be a sequence of subobjects of $\mc S$. Let $i:U_1\into X$. The functor ${}^{\vec p}i^*$ is exact \cite[page 157]{BaIH}, so we obtain a chain ${}^{\vec p}i^*\mc A_1\subset{}^{\vec p}i^* \mc A_2\subset \cdots \subset {}^{\vec p}i^*\mc S^*$ in $ {}^{\vec p} D^{\heartsuit}(U_1)$. By the manifold case, this sequence must stabilize, so we can assume by relabeling that the inclusions $\mc A_k\into \mc A_{k+1}$ all induce isomorphisms  ${}^{\vec p}i^*\mc A_k\cong {}^{\vec p}i^* \mc A_{k+1}$.

Now, let $\mc B_k=\mc A_k/\mc A_1$. Then we have a diagram with exact rows

\begin{diagram}[LaTeXeqno]\label{D: cok}
0&\rTo& \mc A_1&\rTo& \mc A_k&\rTo&\mc B_k&\rTo& 0\\
&&\dTo^=&&\dInto&&\dTo\\
0&\rTo& \mc A_1&\rTo& \mc A_{k+1}&\rTo&\mc B_{k+1}&\rTo& 0
\end{diagram}
and similarly with $\mc S$ and $\mc S/\mc A_1$ in place of  $\mc A_{k+1}$ and $\mc B_{k+1}$.
By the Snake Lemma, we thus have a chain of inclusions $\mc B_1\subset \mc B_2\subset \cdots\subset \mc S/\mc A_1$. If this chain stabilizes then the chain of $\mc A_k$ also stabilizes by applying the Five Lemma to Diagram \eqref{D: cok}. (Note that diagrammatic theorems such as the Snake Lemma and Five Lemma hold in any abelian category by embedding a small abelian subcategory containing the objects of the diagram into the category of abelian groups. See \cite[Section IV.1]{Mit65}, \cite{Mur06}.)

Applying the exact functor ${}^{\vec p}i^*$ to the first row of Diagram \eqref{D: cok}, our assumption that ${}^{\vec p}i^*\mc A_k\cong {}^{\vec p}i^* \mc A_{k+1}$ for all $k$ shows that ${}^{\vec p}i^*\mc B_k=0$. 
But for any  $\mc C\in {}^{\vec p} D^{\heartsuit}(X)$,  we have the exact sequence \cite[Lemma 1.4.19]{BBD}
$$0\to {}^{\vec p} j_*{}^{\vec p} j^!\mc C\to \mc C\to {}^{\vec p} i_*{}^{\vec p} i^*\mc C\to {}^{\vec p}j_*H^1j^!\mc C\to 0,$$
with $j:X^{n-1}\into X$.
So for each $\mc B_k$ we have $\mc B_k\cong {}^{\vec p} j_*{}^{\vec p} j^!\mc B_k$. Furthermore, ${}^{\vec p} j^!$ is a left exact functor \cite[page 157]{BaIH}, so we have a chain ${}^{\vec p} j^!\mc B_1\subset{}^{\vec p}  j^!\mc B_2\subset\cdots\subset{}^{\vec p}  j^!(\mc S/\mc A_1)$ in ${}^{\vec p} D^{\heartsuit}(X^{n-1})$. By our induction hypothesis, the chain of ${}^{\vec p} j^!\mc B_k$ must stabilize. Thus the chain of ${}^{\vec p} j_*{}^{\vec p} j^!\mc B_k\cong \mc B_k$ must stabilize, as desired.
\end{proof}

\section{Torsion-tipped truncation and manifold duality}\label{S: man}

In this section, we provide an interesting example by computing $\H^*(X;\ms P)$, where $X$ is a PL pseudomanifold with  just one singular point $v$ and $R=\Z$. We relate these groups to the homology groups of the $\bd$-manifold obtained by removing a distinguished neighborhood of $v$. We then use manifold techniques to verify (abstractly) some of the isomorphisms guaranteed by Corollary \ref{T: PD}, though we will see that not all of these isomorphisms seem easily obtainable from the manifold perspective. It would be interesting to have a proof that the isomorphisms of the Corollary are always induced by geometric intersection and linking pairings  as is the case for classical intersection homology with field coefficients (see \cite{GM1,GBF18, GBF39, GBF30}). We leave this question for future research.

Let $X$ be a compact  $\Z$-oriented $n$-dimensional PL stratified pseudomanifold with stratification $X=X^n\supset X^0=\{v\}$, where $v$ is a single point. Then $X$ has the form $X\cong M\cup_{\bd M}\bar c(\bd M)$,
where $M^n$ is a compact  $\Z$-oriented PL manifold with boundary.
 Let $U=X-v$. Let $\mc O$ be the constant orientation sheaf with $\Z$ coefficients on $U$, let $i:U\into X$ be the inclusion, and let $k\in \Z$. Let $\ms P=\ms P_{X,\vec p,\mc O}$ be the ts-Deligne sheaf with $\vec p_1(\{v\})=k$ and $\vec p_2(\{v\})=\wp$ for some $\wp\in\P(\primeset{P}(\Z))$. If $k<-1$, then $\ms P$ is the extension by $0$ of (an injective resolution of) $\mc O$. If $k\geq -1$, then $\ms P=\ttau^{\wp}_{\leq k}Ri_*\mc O$. If $\wp=\emptyset$, then $\ms P$ would be the classical Deligne sheaf  for the perversity $\bar p$ with $\bar p(\{v\})=k$ by Example \ref{E: GM}, and its hypercohomology would be the classical perversity $\bar p$ intersection homology. 
 
 To simplify notation, we let $\H^{i}(X;\ms P_{X,\vec p,\mc O})$ be denoted by $I^{\vec p}H_{n-i}(X)$, generalizing the standard intersection homology notation. 
We also let $\vec q=D\vec p$, so that $\vec q_1(\{v\})=n-k-2$ and $\vec q_2(\{v\})$ is the complement of $\wp$ in $\primeset{P}(\Z)$. Further, note that $\mc D\mc O[-n]\cong \mc O$, so $\mc D \ms P_{X,\vec p,\mc O}[-n]\cong \ms P_{X,\vec q,\mc O}$, with hypercohomology groups $I^{\vec q}H_{n-i}(X)$.

\paragraph{Group computations.} We begin by computing $I^{\vec p}H_{n-i}(X)$ as best as possible in terms of the homology groups of $M$. For comparison, it is worth recalling that if $\bar p$ is a perversity on $X$ with $\bar p(\{v\})=k$ then 
 the standard computation involving the cone formula and the Mayer-Vietoris sequence gives\footnote{See \cite[Example 6.3.15]{GBF35}. If $k>n-2$, this computation assumes that we use non-GM intersection homology in the sense of \cite[Chapter 6]{GBF35}; see also \cite{GBF23,GBF26}. }
\begin{equation*}
I^{\bar p}H_{n-i}(X)\cong
\begin{cases}
H_{n-i}(M), & i>k+1,\\
\im(H_{n-i}(M)\to H_{n-i}(M,\bd M)),& i=k+1,\\ 
H_{n-i}(M,\bd M), & i< k+1.
\end{cases}
\end{equation*}
As noted above, this will then also be the computation for $I^{\vec p}H_{n-i}(X)$ when $\vec p_2(\{v\})=\emptyset$. 

As $X$ is compact by assumption,  $I^{\vec p}H_{n-i}(X)=\H^i(X;\ms P)=\H^i_c(X;\ms P)$. Therefore, to study $I^{\vec p}H_{n-i}(X)$, we can use that the adjunction triangle yields a long exact sequence \cite[Remark 2.4.5.ii]{DI04}
\begin{equation*}
\to \H^i_c(U;\ms P)\to \H^i_c(X;\ms P)\to \H^i_c(v;\ms P)\to.
\end{equation*}
We know  the restriction of  $\ms P$ to $U$ is quasi-isomorphic to $\mc O$, so $$\H^i_c(U;\ms P)\cong \H^i_c(U;\mc O)\cong H_{n-i}^c(U)\cong H_{n-i}(M).$$ Furthermore, 
applying Lemma \ref{L: sheaf ttau},
\begin{align*}
\H^i_c(v;\ms P)&\cong \H^i(v;\ms P)\cong 
\begin{cases}
0, & i>k+1,\\
T^{\wp}H^{k+1}((Ri_*\mc O)_v), & i=k+1,\\
H^i((Ri_*\mc O)_v),& i\leq k.
\end{cases}
\end{align*}

But,letting $H^\infty_*$ denotes Borel-Moore homology,

$$H^i((Ri_*\mc O)_v)\cong \dlim_{v\in U} \H^i(U; Ri_*\mc O)
\cong \dlim_{v\in U} \H^i(U-v; \mc O)
\cong \dlim_{v\in U} H_{n-i}^{\infty}(U-v; \mc O).$$ 
So restricting to a cofinal sequence of conical neighborhoods, this becomes simply 
$H_{n-i}^{\infty}(\bd M\times (0,1); \mc O)\cong H_{n-i-1}(\bd M)$. Similarly, $T^{\wp}H^{k+1}((Ri_*\mc O)_v)\cong T^{\wp}H_{n-k-2}(\bd M)$. 

So our exact sequences look like 
\begin{equation*}
\to H_{n-i}(M)\to I^{\vec p}H_{n-i}(X)\to H_{n-i-1}(\bd M)\to
\end{equation*}
for $i\leq k$, like
\begin{equation*}
\to H_{n-i}(M)\to I^{\vec p}H_{n-i}(X)\to 0\to
\end{equation*}
for $i>k+1$, and at the transition, we have
\begin{multline} \label{E: seq}
\to H_{n-k-1}(\bd M) \to H_{n-k-1}(M)\to I^{\vec p}H_{n-k-1}(X)  \\
\to T^{\wp}H_{n-k-2}(\bd M)\to H_{n-k-2}(M)\to I^{\vec p}H_{n-k-2}(X)\to 0.
\end{multline}

It is therefore immediate that $I^{\vec p}H_j(X)\cong H_j(M)$ for $j\leq n-k-3$.
Furthermore, the canonical morphism $\ms P=\ttau^\wp_{\leq k}Ri_*\mc O\to Ri_*\mc O$  induces a map between the corresponding long exact adjunction sequences. The  sequence for  $Ri_*\mc O$ is simply the sheaf-theoretic long exact (compactly supported) cohomology sequence of the pair $(M,\bd M)$, and so it follows from the five lemma that  $I^{\vec p}H_j(X)\cong H_j(M,\bd M)$ for $j\geq n-k$.  It also follows from this that all maps in the sequence for $\ms P$ are the evident ones.
For $i=n-k-2, n-k-1$, we see that $I^{\vec p}H_{n-k-2}(X)\cong \cok(T^{\wp}H_{n-k-2}(\bd M)\to H_{n-k-2}(M))$. The module  $I^{\vec p}H_{n-k-1}(X)$ is a bit more complicated, but we can nonetheless compute it using the following lemma.

\begin{lemma}
Let $\bd_*: H_{n-k-1}(M,\bd M)\to H_{n-k-2}(\bd M)$ be the boundary map of the exact sequence, and let $\q^{\wp}$ be the quotient $\q^{\wp}: H_{n-k-2}(\bd M)\to H_{n-k-2}(\bd M)/T^{\wp}H_{n-k-2}(\bd M)$.
Then $I^{\vec p}H_{n-k-1}(X)\cong \ker(\q^{\wp}\bd_*)$.  
\end{lemma}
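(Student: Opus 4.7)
The plan is to exploit the naturality of the adjunction long exact sequence under the canonical inclusion $\ms P^*\hra Ri_*\mc O$. This morphism is a quasi-isomorphism on $U$, and on the stalk at $v$ (by Lemma \ref{L: sheaf ttau}) its only nontrivial effect is to realize the subgroup inclusion $T^{\wp}H^{k+1}((Ri_*\mc O)_v)\hookrightarrow H^{k+1}((Ri_*\mc O)_v)\cong H_{n-k-2}(\bd M)$. Applying $\H^*_c(X;-)$ to the two adjunction triangles therefore yields a commutative ladder of long exact sequences whose top row is \eqref{E: seq} and whose bottom row is the corresponding sequence for $Ri_*\mc O$.

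First I would identify that bottom row. Since $X$ is compact and $i$ is an open inclusion, $\H^i_c(X;Ri_*\mc O)\cong \H^i(U;\mc O)\cong H_{n-i}(M,\bd M)$ by Lefschetz duality on the interior of $M$, while $H^i((Ri_*\mc O)_v)\cong H_{n-i-1}(\bd M)$ as already computed. So the bottom row becomes precisely the long exact homology sequence of the pair $(M,\bd M)$, with connecting homomorphism $\bd_*$. In the ladder, the outer vertical maps are identities on $H_{n-k-1}(\bd M)$, $H_{n-k-1}(M)$, and $H_{n-k-2}(M)$, while the vertical map between the $v$-stalk terms is the inclusion $T^{\wp}H_{n-k-2}(\bd M)\hra H_{n-k-2}(\bd M)$.

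A short diagram chase now identifies the middle vertical map $\phi\colon N^{\vec p}H_{n-k-1}(X)\to H_{n-k-1}(M,\bd M)$. For injectivity: if $\phi(x)=0$ then $\bd_*\phi(x)=0$, so the image of $x$ in $T^{\wp}H_{n-k-2}(\bd M)$ maps to $0$ in $H_{n-k-2}(\bd M)$; as this is an inclusion, $x$ itself already maps to $0$ in $T^{\wp}H_{n-k-2}(\bd M)$ and thus lifts via the top row to some $m\in H_{n-k-1}(M)$. Commutativity forces $m$ to map to $0$ in $H_{n-k-1}(M,\bd M)$, so by pair-exactness $m$ is in the image of $H_{n-k-1}(\bd M)$, whence $x=0$. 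For the image: any $\phi(x)$ satisfies $\bd_*\phi(x)\in T^{\wp}H_{n-k-2}(\bd M)$, so $\im\phi\subseteq \ker(\q^{\wp}\bd_*)$. Conversely, if $\alpha\in H_{n-k-1}(M,\bd M)$ satisfies $\bd_*\alpha\in T^{\wp}H_{n-k-2}(\bd M)$, then $\bd_*\alpha$ dies in $H_{n-k-2}(M)$ by pair-exactness, so it lifts to some $y\in N^{\vec p}H_{n-k-1}(X)$ via top-row exactness; then $\alpha-\phi(y)\in \ker\bd_*$ is the image of some $m\in H_{n-k-1}(M)$, and $\phi$ applied to $y$ plus the image of $m$ in $N^{\vec p}H_{n-k-1}(X)$ recovers $\alpha$.

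There is no genuine difficulty here; the whole argument is a standard five-lemma-style chase once the ladder is in hand. The only point requiring care is verifying that the bottom-row connecting map really is $\bd_*$, which can be done either by unpacking Lefschetz duality geometrically or by reading off directly that the adjunction sequence for $Ri_*\mc O$ on $(X,v,U)$ is the long exact sequence of the pair $(M,\bd M)$ under the above identifications.
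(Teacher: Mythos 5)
Your argument is correct and is essentially identical to the paper's: the paper sets up the very same commutative ladder between the adjunction long exact sequence for $\ms P^*$ and that for $Ri_*\mc O$ (identified, as you note, with the long exact sequence of the pair $(M,\bd M)$), with vertical maps the identity except over the stalk at $v$ where the map is the torsion-subgroup inclusion $T^{\wp}H_{n-k-2}(\bd M)\hra H_{n-k-2}(\bd M)$, and then runs the same five-lemma-style chase to show that the middle vertical map is an isomorphism onto $\ker(\q^{\wp}\bd_*)$. No substantive difference.
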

\begin{proof}
Consider the following diagram of exact sequences induced by the inclusion $\ms P\into Ri_*\mc O$, letting  $h$ denote the inclusion of the $\wp$-torsion subgroup of $H_{n-k-2}(\bd M)$:
\begin{diagram}[LaTeXeqno]\label{D: compare}
H_{n-k-1}(\bd M)&\rTo& H_{n-k-1}(M)&\rTo^{\td j} & I^{\vec p}H_{n-k-1}(X)&\rTo^f & T^{\wp}H_{n-k-2}(\bd M)&\rTo & H_{n-k-2}(M)\\
\dTo^=&& \dTo^=&& \dTo^g&& \dTo^h&& \dTo^=&&\\
H_{n-k-1}(\bd M)&\rTo^i &H_{n-k-1}(M)&\rTo^j & H_{n-k-1}(M,\bd M)&\rTo^{\bd_*} & H_{n-k-2}(\bd M)&\rTo & H_{n-k-2}(M)
\end{diagram}

From the diagram, if $x\in I^{\vec p}H_{n-k-1}(X)$, then $g(x)\in H_{n-k-1}(M,\bd M)$ maps to a $\wp$-torsion element in $H_{n-k-2}(\bd M)$ under the boundary map. Thus $I^{\vec p}H_{n-k-1}(X)$ must map into $\ker(\q^{\wp}\bd_*)$. 

We now proceed with diagram chases akin to those in the proof of the five lemma.

To see that $g$ maps onto $\ker(\q^{\wp}\bd_*)$, suppose  $u\in \ker(\q^{\wp}\bd_*)$. Then $\bd_*(u)\in T^{\wp}H_{n-k-2}(\bd M)$, so $\bd_*(u)$ is in the image of $h$. Since the image of $\bd_*(u)$ in $H_{n-k-2}(M)$ must be $0$ (from the long exact sequence on the bottom), it follows that $\bd_*(u)$, as an element of $T^{\wp}H_{n-k-2}(\bd M)$ must be in the image of $f$. Let $x\in I^{\vec p}H_{n-k-1}(X)$ be such that $hf(x)=\bd_* u\in H_{n-k-2}(\bd M)$. Then $\bd_*(g(x))=\bd_*(u)$ from the diagram, i.e. $\bd_*(g(x)-u)=0$, so there is a $z\in H_{n-k-1}(M)$ such that $j(z)=g(x)-u$. But $j(z)=g\td j(z)$, so $g\td j(z)=g(x)-u$, whence $u=g(x)-g\td j(z)=g(x-\td j(z))$. Therefore $u$ is in the image of $g$ and so $g$ maps onto $\ker(\q^{\wp}\bd_*)$.

 For injectivity,  suppose $x\in  I^{\vec p}H_{n-k-1}(X)$ and $g(x)=0$. Then $\bd_*g(x)=hf(x)=0$, but $h$ is injective, so $f(x)=0$ and $x=\td j(y)$ for some $y\in H_{n-k-1}(M)$. This implies that $j(y)=g\td j(y)=g(x)=0$, so $y=i(z)$ for some $z\in H_{n-k-1}(\bd M)$. But then $x=\td j(y)=\td ji(z)=0$, from the short exact sequence.
\end{proof}

So, altogether, we see that if $\vec p_1(\{v\})=k$ then

\begin{equation}\label{E: point sing}
I^{\vec p}H_i(X)\cong
\begin{cases}
H_i(M,\bd M),&i\geq n-k,\\
\ker(H_{i}(M,\bd M)\xr{\q^{\wp}\bd_*} H_{i-1}(\bd M)/T^{\wp}H_{i-1}(\bd M)),&i=n-k-1,\\
\cok(T^{\wp}H_{i}(\bd M)\to H_{i}(M)),&i=n-k-2,\\
H_i(M), &i\leq n-k-3.\\
\end{cases}
\end{equation}
For reference below, if we replace $\vec p$ with its dual $\vec q$ we see that similarly
\begin{equation*}
I^{\vec q}H_j(X)\cong
\begin{cases}
H_j(M,\bd M),&j\geq k+2,\\
\ker(H_{k+1}(M,\bd M)\xr{\q^{D\wp}\bd_*} H_{k+1}(\bd M)/T^{D\wp}H_{k+1}(\bd M)),&j=k+1,\\
\cok(T^{D\wp}H_{j}(\bd M)\to H_{j}(M)),&j=k,\\
H_j(M), &j\leq k-1\\
\end{cases}
\end{equation*}
In particular,  $I^{\vec p}H_i(X)\cong I^{\vec p_1}H_i(X)$ for $i\neq n-k-2, n-k-1$ and  $I^{\vec q}H_i(X)\cong I^{\vec q_1}H_i(X)$ for $i\neq k, k+1$.

\paragraph{Duality isomorphisms.} 
Corollary \ref{T: PD} implies that for all $i$ there must be isomorphisms
\begin{equation}\label{E: point duality}
FI^{\vec p}H_i(X)\cong \Hom(FI^{\vec q}H_{n-i}(X),\Z) \qquad TI^{\vec p}H_i(X)\cong \Hom(TI^{\vec q}H_{n-i-1}(X),\Q/\Z),
\end{equation}
where given an abelian group $G$ we again let  $TG$ denote the torsion subgroup of $G$ and  $FG=G/TG$.
We would like to see how these isomorphisms \eqref{E: point duality} relate to known isomorphisms from Lefschetz duality. For such a simple pseudomanifold, many of the isomorphisms of \eqref{E: point duality} correspond to the known duality isomorphisms of ordinary intersection homology, which themselves can be described in terms of the intersection and torsion linking pairings on the manifold $M$. However, even for classical intersection homology the direct relation between the sheaf-theoretic and PL intersection pairings turns out to be a difficult result; see \cite{GBF30}. So we will not attempt to prove here that all the pairings \eqref{E: point duality} can be obtained by PL intersection and linking, though the author hopes to demonstrate this in the future. 

Rather, what we will look at here is the extent to which the isomorphisms of \eqref{E: point duality} can be deduced \emph{abstractly} from Lefschetz duality, meaning that we will look at when Lefschetz duality provides \emph{some} isomorphisms as in \eqref{E: point duality} but without showing that it provides the \emph{same} isomorphisms. When $i=n-k-1,n-k-2$, it is not so obvious that these isomorphisms come from classical manifold duality.  For the simple cases where $\wp$ is $\emptyset$ or $P(\Z)$ we will be able to verify these abstract isomorphisms using intersection and linking pairings now that we know to look for them, though we will see that even this requires some effort. We will not provide such a verification for more general $\wp$, as we will see that these isomorphisms are much less clear from the pure manifold perspective. Rather, we consider the isomorphisms obtained by combining \eqref{E: point duality} and \eqref{E: point sing} as an  application of our ts-Deligne-sheaf machinery to detect facts about the homology of manifolds not easily obtained by direct means. 

We begin with the following easy observations:

\begin{enumerate}
\item We have seen that $I^{\vec p}H_i(X)\cong H_i(M)$ for $i\leq n-k-3$, while $I^{\vec q}H_{i}(X)\cong
H_i(M,\bd M)$ for $j\geq  k+2$. So for  $i\leq n-k-3$, there exist isomorphisms of the form \eqref{E: point duality} by classical Lefschetz duality. 

\item Similarly, we have $I^{\vec p}H_i(X)\cong H_i(M,\bd M)$ for $i\geq n-k$, while $I^{\vec q}H_i(X)\cong H_i(M)$ for $i\leq  k-1$. So, again, there exist isomorphisms of the form \eqref{E: point duality} by classical Lefschetz duality when $i\geq n-k+1$ and also for the classical Lefschetz torsion pairing when $i=n-k$. 

\item When $i=n-k$, the torsion-free part of \eqref{E: point duality} also follows abstractly  from Lefschetz duality, since 
$FI^{\vec q}H_i(X)\cong F(\cok(T^{D\wp}H_{i}(\bd M)\to H_{i}(M)))
\cong FH_i(M).$

\item We have  seen that $I^{\vec p}H_{n-k-2}(X)\cong \cok(T^{\wp} H_{n-k-2}(\bd M)\to H_{n-k-2}(M))$, and so $FI^{\vec p}H_{n-k-2}(X)\cong FH_{n-k-2}(M)$. Once again, $I^{\vec q}H_{k+2}(X)\cong H_{k+2}(M,\bd M)$, so there is an isomorphism as in \eqref{E: point duality} by Lefschetz duality.

\end{enumerate}

By contrast, the remaining isomorphisms 
\begin{align*}
FI^{\vec p}H_{n-k-1}(X)&\cong \Hom(FI^{\vec q}H_{k+1}(X),\Z)\\
 TI^{\vec p}H_{n-k-1}(X)&\cong \Hom(TI^{\vec q}H_{k}(X),\Q/\Z)\\
 TI^{\vec p}H_{n-k-2}(X)&\cong \Hom(TI^{\vec q}H_{k+1}(X),\Q/\Z)
\end{align*} 
are not evident from the classical manifold point of view, though, by  Corollary \ref{T: PD}, such isomorphisms must exist.
We will provide  explicit such isomorphisms via intersection and linking  forms on $M$ in the special case where $\wp=\primeset{P}(\Z)$, the set of all primes, and $D\wp=\emptyset$. The same arguments would also handle the case with $\wp$ and $D\wp$ reversed. The more general situation seems to be quite a bit more delicate, and we will not take it up here.

\begin{lemma}
If $M$ is a compact PL manifold with non-empty boundary and  $\vec p_2(\{v\})=\primeset{P}(\Z)$, the intersection pairing on $M$ induces a nonsingular pairing between $FI^{\vec p}H_{n-k-1}(X)\subset FH_{n-k-1}(M,\bd M)$ and $FI^{\vec q}H_{k+1}(X)\subset FH_{k+1}(M,\bd M)$.
\end{lemma}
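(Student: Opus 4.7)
The approach is to define the pairing via the classical intersection pairing $H_{k+1}(M)\otimes H_{n-k-1}(M,\bd M)\to \Z$ on the manifold $M$ and then deduce nonsingularity from Lefschetz duality for $M$ together with Poincar\'e duality on $\bd M$. With $\vec p_2(\{v\})=\primeset{P}(\Z)$, formula \eqref{E: point sing} specializes to $N^{\vec p}H_{n-k-1}(X)=\bd_*^{-1}(TH_{n-k-2}(\bd M))$ and $N^{\vec q}H_{k+1}(X)=\im(j\colon H_{k+1}(M)\to H_{k+1}(M,\bd M))$. Given $[\beta]\in N^{\vec q}H_{k+1}(X)$, I will choose any lift $\tilde\beta\in H_{k+1}(M)$ and set $\langle[\beta],\alpha\rangle:=\tilde\beta\cdot\alpha$ for $\alpha\in N^{\vec p}H_{n-k-1}(X)\subseteq H_{n-k-1}(M,\bd M)$. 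Two lifts differ by $i_*\gamma$ for some $\gamma\in H_{k+1}(\bd M)$, and the standard naturality identity $i_*\gamma\cdot\alpha=\gamma\cdot_{\bd M}\bd_*\alpha$ together with $\bd_*\alpha\in TH_{n-k-2}(\bd M)$ forces this ambiguity to be torsion in $\Z$, hence zero. The pairing is thus well-defined and factors through torsion-free quotients.

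For nonsingularity, I would appeal to the nonsingular Lefschetz duality pairing $FH_{k+1}(M)\otimes FH_{n-k-1}(M,\bd M)\to \Z$. Let $A=\im(i_*\colon H_{k+1}(\bd M)\to H_{k+1}(M))$. Using the naturality identity again, an element $\alpha\in FH_{n-k-1}(M,\bd M)$ annihilates $A$ under this pairing if and only if $\bd_*\alpha$ pairs trivially with every class in $H_{k+1}(\bd M)$ under the intersection pairing on the closed oriented $(n-1)$-manifold $\bd M$. Poincar\'e duality on $\bd M$ is nonsingular on free parts, so this amounts to $\bd_*\alpha\in TH_{n-k-2}(\bd M)$; in other words, $A^{\perp}$ inside $FH_{n-k-1}(M,\bd M)$ coincides with the image of $N^{\vec p}H_{n-k-1}(X)$. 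A quick check shows that this image is a primitive (i.e.\ direct-summand) subgroup of $FH_{n-k-1}(M,\bd M)$: if $x\in H_{n-k-1}(M,\bd M)$ and $nx\in N^{\vec p}H_{n-k-1}(X)$ for some $n\neq 0$, then $n\bd_*x$ is torsion in $H_{n-k-2}(\bd M)$, hence so is $\bd_*x$, so $x$ already lies in $N^{\vec p}H_{n-k-1}(X)$.

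Finally, I will apply the standard formalism for nonsingular pairings of finitely generated free $\Z$-modules: if $V\otimes W\to\Z$ is nonsingular and $W_0\subseteq W$ is primitive, then $V_0:=W_0^{\perp}$ is primitive in $V$, the double perpendicular satisfies $V_0^{\perp}=W_0$, and the induced pairing $(V/V_0)\otimes W_0\to\Z$ is nonsingular. Taking $W_0=FN^{\vec p}H_{n-k-1}(X)$, the identification $A^{\perp}=W_0$ forces $V_0$ to equal the double perpendicular of $A$, which is the saturation of the image of $A$ in $FH_{k+1}(M)$; this saturation is precisely the kernel of the surjection $FH_{k+1}(M)\onto FN^{\vec q}H_{k+1}(X)$, so the induced pairing on $FN^{\vec q}H_{k+1}(X)\otimes FN^{\vec p}H_{n-k-1}(X)$ is nonsingular. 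The main technical point is the identification $A^{\perp}=FN^{\vec p}H_{n-k-1}(X)$, which hinges on the naturality formula for intersection pairings combined with the nonsingularity on free parts of Poincar\'e duality on $\bd M$; once this is established the remaining conclusion is pure linear algebra.
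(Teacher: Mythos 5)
Your argument is correct, and it reaches the conclusion by a genuinely different route from the paper's. Both approaches build on the same two underlying facts: (i) the intersection number of a relative class $\alpha$ with a class coming from the boundary is governed by $\partial_*\alpha$, and (ii) $F\ker(\q^{\wp}\partial_*)$ is a direct summand of $FH_{n-k-1}(M,\partial M)$. For (i), the paper argues geometrically by pushing a representative of $m\alpha$ off the boundary along a collar so that it becomes disjoint from any cycle in $\partial M$, whereas you invoke the algebraic naturality identity $i_*\gamma\cdot_M\alpha=\pm\,\gamma\cdot_{\partial M}\partial_*\alpha$ directly. Where the proofs really diverge is in establishing nonsingularity: the paper checks injectivity and surjectivity of the map $\phi\colon FN^{\vec q}H_{k+1}(X)\to\operatorname{Hom}(FN^{\vec p}H_{n-k-1}(X),\Z)$ by hand, finding Lefschetz-dual partners and explicitly constructing a dual basis using the summand property of $F\ker(\q^{\wp}\partial_*)$. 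You instead compute that the perpendicular of $A=\operatorname{im}(i_*\colon H_{k+1}(\partial M)\to H_{k+1}(M))$ under the ambient nonsingular Lefschetz pairing is exactly $FN^{\vec p}H_{n-k-1}(X)$ (using Poincar\'e duality on $\partial M$), verify this subgroup is primitive, identify the saturation of $\bar A$ with the kernel of $FH_{k+1}(M)\twoheadrightarrow FN^{\vec q}H_{k+1}(X)$, and then deduce nonsingularity from the general fact about restricting a unimodular pairing over $\Z$ to a primitive subgroup and its perpendicular quotient. Your treatment is tidier and isolates a reusable linear-algebraic lemma; the paper's is more self-contained and geometrically transparent but requires two separate bespoke diagram chases for injectivity and surjectivity.
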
\begin{proof} 
As indicated in the statement of the lemma, we identify $FI^{\vec p}H_{n-k-1}(X)$ with
$F\ker(\q^{\wp}\bd_*)\subset H_{n-k-1}(M,\bd M)$ and $FI^{\vec q}H_{k+1}(X)$ with $F\ker(\q^{D\wp}\bd_*)\subset H_{k+1}(M,\bd M)$.
As  $D\wp=\emptyset$, the latter group is really just $F\ker(\bd_*)\subset H_{k+1}(M,\bd M)$.

We define $\phi: FI^{\vec q}H_{k+1}(X)\to \Hom(FI^{\vec p}H_{n-k-1}(X),\Z)$ via intersection pairings.
 Suppose   $\xi\in FI^{\vec q}H_{k+1}(X)$. Then $\bd_*\xi=0$, and $\xi=j(x)$ for some $x\in H_{k+1}(M)$ by the long exact sequence in \eqref{D: compare}. Define the homomorphism $\phi(\xi)$ so that if $y\in FI^{\vec p}H_{n-k-1}(X)$ then $(\phi(\xi))(y)=x\pf y$, where $\pf$ denotes the Lefschetz duality intersection pairing on $M$.   We first check this is well-defined. 

The intersection pairing is trivial on torsion elements, so $\phi$ is well defined on the torsion free quotients. Next, we show that $\phi$ is independent of the choice of $x$. For this, suppose  $z\in \ker (H_{k+1}(M)\to H_{k+1}(M,\bd M))$. We will show that 
 $z\pf y=0$. So if $x'$ is another preimage of $\xi$ in $H_{k+1}(M)$, then $x-x'\in \ker (H_{k+1}(M)\to H_{k+1}(M,\bd M))$, so $(x-x')\pf y=0$ and $x\pf y=x'\pf y$. 
It will follow that $\phi$ is independent of the choice of $x$.  
So let $z\in \ker (H_{k+1}(M)\to H_{k+1}(M,\bd M))$. Then $z$  is represented by a chain in $\bd M$.  Now if $y\in \ker (\q^{\wp}\bd_*)$, then for some $m\in S(\wp)$,  we have $m\bd_* y=0\in H_{n-k-2}(\bd M)$, and this implies $m\bd_* y$, which is represented by $m\bd y$, is itself a boundary in $\bd M$, say\footnote{We will have occasion to abuse notation by sometimes letting the same symbol refer to both a chain and the homology class it represents.} $m\bd y=\bd Y$ for some $Y\in C_{n-k-1}(\bd M)$. So $my-Y$ is a cycle in $M$ that also represents $my$ in $H_{k+1}(M,\bd M)$. But then $my-Y$ is homologous to a cycle $u$ in the interior of $M$ by pushing in along a collar of the boundary. In particular, $u$ and $z$ can be represented by disjoint cycles in $M$. So, in $M$, the intersection number of $z$ and $u$ is $0$. But the intersection number between $z$ and $u$ represents $z\pf my$ as $my=u \in H_{n-k-1}(M,\bd M)$. So $z\pf my=m(z\pf y)=0$, and $z\pf y$ must be $0$.  Thus $\phi$ is independent of the choice of $x$.

We also observe that $\phi(x)(y_1+y_2)=\phi(x)(y_1)+\phi(x)(y_2)$ by the basic properties of intersection products. To show that $\phi$ is a homomorphism, we note that if  $\xi_1,\xi_2\in FI^{\vec q}H_{k+1}(X)$ and $j(x_1)=\xi_1$,  $j(x_2)=\xi_2$, then $j(x_1+x_2)=\xi_1+\xi_2$, and so 
\begin{equation*}
\phi(\xi_1+\xi_2)(y)= (\xi_1+\xi_2)\pf y
=\xi_1\pf y+\xi_2\pf y
=\phi(\xi_1)(y)+\phi(\xi_2)(y).
\end{equation*}
Altogether, we have now shown that $\phi$ is a well-defined homomorphism. 

Next we show that $\phi$ is injective. Recall that, by Lefschetz duality,  $FH_{k+1}(M)\cong \Hom(FH_{n-k-1}(M,\bd M),\Z)$ and 
$FH_{k+1}(M,\bd M)\cong \Hom(FH_{n-k-1}(M),\Z)$ via the intersection pairing. 
Let $\xi\in FI^{\vec q}H_{k+1}(X)\cong 
F\ker(\bd_*)$ with $\xi\neq 0$. We will show that $\phi(\xi)\neq 0$, which implies injectivity. The class $\xi$ is represented by a cycle  $x$ in $M$, which also represents an element of $FH_{k+1}(M)$. As $0\neq \xi \in FH_{k+1}(M,\bd M)$, by Lefschetz duality, there must be a $y\in FH_{n-k-1}(M)$ such that $x\pf y\neq 0$. Furthermore, the intersection number continues to be the same if we think of a chain representing $y$ as instead representing an element of $FH_{n-k-1}(M,\bd M)$, while $x$ can be represented by  an element of $H_{k+1}(M)$. Therefore, the class of the chain representing $y$ must be non-zero in $FH_{n-k-1}(M,\bd M)$, and, since it's in the image of $FH_{n-k-1}(M)$, it must be in $\ker(\bd_*)$ and hence in $F\ker(\q^{\wp}\bd_*)$. Therefore, given a non-zero $\xi\in FI^{\vec q}H_{k+1}(X)$, with $x$ a preimage of $\xi$ in $H_{k+1}(M)$, we have found a $y\in  FI^{\vec p}H_{n-k-1}(X)$ such that $x\pf y\neq 0$. It follows that $\phi(\xi)\neq 0$, and thus $\phi$ is injective.

For surjectivity, $\q^{\wp}\bd_*$ has free image (as $\wp=\primeset{P}(\Z)$), so  $\ker (\q^{\wp}\bd_*)=FI^{\vec p}H_{n-k-1}(X)$ is a direct summand of $FH_{n-k-1}(M,\bd M)$. Let $y$ be a generator of $\ker (\q^{\wp}\bd_*)$, and let $\{y'_j\}$ be a collection of elements of $FH_{n-k-1}(M,\bd M)$ that together with $y$ form a basis.  Let $\{y''_\ell\}$ be a collection of elements of $\ker (\q^{\wp}\bd_*)$ that together with $y$ form a basis. As $\ker (\q^{\wp}\bd_*)\subset FH_{n-k-1}(M,\bd M)$, ever $y''_\ell$ must be a linear combination of the $\{y'_j\}$. 
Now, let $x\in FH_{k+1}(M)$ be the Lefschetz dual of $y$ in the pairing between $FH_{n-k-1}(M,\bd M)$ and $FH_{k+1}(M)$. In other words, let $x$ be the unique element with $x\pf y=1$, while $x\pf y_j'=0$ for each of the $y_j'$. Let $\xi$ be the image of $x$ in $FH_{k+1}(M, \bd M)$; then $\xi\in F\ker(\bd_*)=FI^{\vec q}H_{k+1}(X)$. We must have $\phi(\xi)(y)=1$, while all $\phi(\xi)(y''_\ell)=0$.  So $\xi$ is a dual to $y$ in the pairing between  $FI^{\vec p}H_{n-k-1}(X)$ and $FI^{\vec q}H_{k+1}(X)$. 
Since $y$ was an arbitrary generator of $F\ker(\q^{\wp}\bd_*)$, we can construct a dual basis in $FI^{\vec q}H_{k+1}(X)$  to our basis of $FI^{\vec p}H_{n-k-1}(X)$, and  so $\phi$ is surjective.
\end{proof}

\begin{lemma}
If $M$ is a compact PL manifold with non-empty boundary and  $\vec p_2(\{v\})=\primeset{P}(\Z)$, the linking pairing on $M$ induces a nonsingular pairing 
between $TI^{\vec p}H_{n-k-1}(X)$ and $TI^{\vec q}H_{k}(X)$ and a nonsingular pairing between $TI^{\vec p}H_{n-k-2}(X)$ and $TI^{\vec q}H_{k+1}(X)$. 
\end{lemma}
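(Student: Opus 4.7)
The plan is to identify each of the four torsion groups as a concrete sub/quotient of $H_*(M)$ or $H_*(M,\bd M)$ via \eqref{E: point sing}, and then recognize each claimed pairing as a factoring of the classical Lefschetz--Poincar\'e torsion linking pairing on $(M,\bd M)$. Throughout, the hypothesis $\vec p_2(\{v\})=\primeset{P}(\Z)$ makes $T^{\wp}$ the full torsion functor and $\q^{\wp}$ the projection to the torsion-free quotient, while $D\wp=\emptyset$ makes $T^{D\wp}=0$ and $\q^{D\wp}$ the identity.

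For the first pairing, I would first observe that $TN^{\vec p}H_{n-k-1}(X)=TH_{n-k-1}(M,\bd M)$: every torsion class in $H_{n-k-1}(M,\bd M)$ has torsion boundary in $H_{n-k-2}(\bd M)$, which dies in $FH_{n-k-2}(\bd M)$, so all of $TH_{n-k-1}(M,\bd M)$ lies in $\ker(\q^{\wp}\bd_*)$, and conversely the torsion of a subgroup is its intersection with the ambient torsion. Simultaneously, $TN^{\vec q}H_k(X)=T(H_k(M)/0)=TH_k(M)$. The first claim therefore reduces directly to the nonsingularity of the classical Lefschetz linking pairing $TH_{n-k-1}(M,\bd M)\otimes TH_k(M)\to \Q/\Z$.

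For the second pairing, an elementary argument using $i_*(TH_{n-k-2}(\bd M))\subseteq TH_{n-k-2}(M)$ gives $TN^{\vec p}H_{n-k-2}(X)=TH_{n-k-2}(M)/i_*(TH_{n-k-2}(\bd M))$, while exactness at $H_{k+1}(M,\bd M)$ in the pair sequence yields $TN^{\vec q}H_{k+1}(X)=T\ker(\bd_*)=T\im(j\colon H_{k+1}(M)\to H_{k+1}(M,\bd M))$. I would then factor the classical pairing $L_M\colon TH_{n-k-2}(M)\otimes TH_{k+1}(M,\bd M)\to \Q/\Z$ through these sub/quotient groups. Well-definedness amounts to $L_M(i_*z,y)=0$ whenever $z\in TH_{n-k-2}(\bd M)$ and $y=j(\beta)\in T\im(j)$: since $\beta$ can be chosen as an absolute cycle in $M$, a collar pushoff separates $\beta$ from any chain in $\bd M$ bounding a multiple of $i_*z$, forcing the relevant intersection count, and hence $L_M(i_*z,y)$, to vanish.

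The main obstacle is nonsingularity of the induced pairing, which I would deduce from the classical transgression identity
\[L_M(i_*z,y)\;=\;L_{\bd M}(z,\bd_*y)\qquad \text{for } z\in TH_{n-k-2}(\bd M),\ y\in TH_{k+1}(M,\bd M),\]
where $L_{\bd M}$ denotes the nonsingular linking pairing on the closed $(n-1)$-manifold $\bd M$. This identity is standard but genuinely requires a careful chain-level argument using a collar of $\bd M$ (arranging representatives so that the intersection of $i_*Z$ with $Y$ in $M$ coincides with the intersection of $Z$ with $\bd Y$ in $\bd M$); it is the delicate step. Granting it, $y\in TH_{k+1}(M,\bd M)$ annihilates $i_*(TH_{n-k-2}(\bd M))$ under $L_M$ if and only if $\bd_*y\in TH_k(\bd M)$ annihilates every $z\in TH_{n-k-2}(\bd M)$ under $L_{\bd M}$, which by nonsingularity of $L_{\bd M}$ forces $\bd_*y=0$, so $y\in T\im(j)$. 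The standard annihilator argument for nondegenerate pairings of finite abelian groups then promotes this to a nonsingular pairing $TN^{\vec p}H_{n-k-2}(X)\otimes TN^{\vec q}H_{k+1}(X)\to \Q/\Z$, completing the argument.
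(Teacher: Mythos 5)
Your proof is correct and takes essentially the same approach as the paper. The paper establishes the same boundary/linking transgression identity via an explicit chain-level argument with a collar (in the form $\bd_*x\odot_{\bd M}y=\pm x\odot_M y$), and in place of your appeal to the abstract annihilator fact for nonsingular $\Q/\Z$-pairings it runs a cardinality computation on the kernels of the two restricted duality maps that unwinds to the same conclusion.
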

\begin{proof}
Given that $\vec p_2(\{v\})=\primeset{P}(\Z)$, the  pairing involving $TI^{\vec p}H_{n-k-1}(X)$ actually reduces to the standard Lefschetz torsion linking pairing. To see this,
 we first have from our computations that $TI^{\vec p}H_{n-k-1}(X)\cong T\ker(H_{n-k-1}(M,\bd M)\xr{\q^{\wp}\bd_*} H_{n-k-2}(\bd M)/T^{\wp}H_{n-k-2}(\bd M))$. But this is precisely
 $TH_{n-k-1}(M,\bd M)$, itself, as any torsion element of $H_{n-k-1}(M,\bd M)$ that is not in $\ker\bd_*$ has its image in $TH_{n-k-2}(M)$, and so dies under\footnote{Here is one place we use our assumption $\vec p_2(\{v\})=P(\Z)$. We can also see here one reason that a general choice of $\wp$ would make things much more complicated, as in this case $TI^{\vec p}H_{n-k-1}(X)$ would have to contain all of the $\wp$-torsion of $TH_{n-k-1}(M,\bd M)$ but perhaps also some $D\wp$-torsion elements that happen to be in $\ker \bd_*$ though this need not be all the $D\wp$-torsion of $TH_{n-k-1}(M,\bd M)$, nor even a direct summand of the $D\wp$-torsion subgroup.} $\mf q^{\wp}=\mf q^{\primeset{P}(\Z)}$. 
On the other hand,  $I^{\vec q}H_k(M)\cong \cok(T^{D\wp}H_{k}(\bd M)\to H_{k}(M))=H_{k}(M)$, as $D\wp$ is empty and thus $T^{D\wp}H_{k}(\bd M)=0$. So the isomorphism $TH_{n-k-1}(M,\bd M)\cong \Hom(TH_k(M),\Q/\Z)$ of the classical linking pairing becomes $TI^{\vec p}H_{n-k-1}(X) \cong \Hom(TI^{\vec q}H_k(M),\Q/\Z).$ 

Now, we consider $TI^{\vec p}H_{n-k-2}(X)$. By \eqref{E: point sing}, we have $I^{\vec p}H_{n-k-2}(X)\cong \cok(TH_{n-k-2}(\bd M)\to H_{n-k-2}(M)).$
So if we let $U=\im(TH_{n-k-2}(\bd M)\to TH_{n-k-2}(M))$; then  $TI^{\vec p}H_{n-k-2}(X)\cong TH_{n-k-2}(M)/U$. 
Meanwhile
\begin{align*}
I^{\vec q}H_{k+1}(X)&\cong \ker(H_{k+1}(M,\bd M)\xr{\q^{D\wp}\bd_*} H_{k+1}(\bd M)/T^{D\wp}H_{k+1}(\bd M))\\
&\cong \ker(H_{k+1}(M,\bd M)\xr{\bd_*} H_{k+1}(\bd M))\cong \im(H_{k+1}(M)\to H_{k+1}(M,\bd M)),
\end{align*}
since $D\wp=\emptyset$. For brevity, let  $W=\im(H_{k+1}(M)\to H_{k+1}(M,\bd M))\cong I^{\vec q}H_{k+1}(X)$, and let $\odot: TH_{n-k-2}(M)\otimes TH_{k+1}(M,\bd M)\to \Q/\Z$ denote the linking pairing operation\footnote{Recall that the linking number can be described geometrically as follows: if $x, y$ are cycles in general position with $mx=\bd z$ and $ny=\bd u$, $m,n\neq 0$, then $x\odot y=\frac{z\pf y}{m}=\frac{x\pf u}{n}\in \Q/\Z$, where now $\pf$ denotes the intersection number on chains in general position. A derivation of this formula in the dual cohomological setting can be found in \cite[Section 8.4.3]{GBF35}.}. Define  $f:TI^{\vec p}H_{n-k-2}(X)\to \Hom(TW,\Q/\Z)$ by $f(x)(y)=x\odot y$. We must first show that this is well defined by showing that $x\odot y=0$ if $x\in U$. But in this case $x$ is represented by a cycle in $\bd M$ and if $mx=0\in TH_{n-k-2}(\bd M)$, $m\neq 0$, then $mx=\bd z$ for some chain $z$ in $\bd M$. By definition, $y$ is represented by a cycle in $M$, which we can assume is supported in the interior of $M$. Thus $z\pf y=0$, so $x\odot y=0$. 

Consider the inclusion $TW\into  TH_{k+1}(M,\bd M)$. By classical manifold linking duality, the linking pairing induces an isomorphism $TH_{n-k-2}(M)\to \Hom(TH_{k+1}(M,\bd M),\Q/\Z)$. Since $TW$ is a subgroup of $TH_{k+1}(M,\bd M)$ and $\Q/\Z$ is an injective group, we have a surjection $\Hom(TH_{k+1}(M,\bd M),\Q/\Z)\to \Hom(TW,\Q/\Z)$ induced by restriction. The composition $g: TH_{n-k-2}(M)\to \Hom(TW,\Q/\Z)$ induces $f$, which we therefore see is onto.

Next, since we already know $U\subset \ker g$, to show that $f$ is injective, it now suffices to show $\ker g\subset U$.  By counting,
\begin{equation*}
|TH_{n-k-2}(M)|=|\ker g| \cdot |\im g|=|\ker g|\cdot |\Hom(TW,\Q/\Z)|=|\ker g| \cdot |TW|.
\end{equation*}

Consider the linking duality isomorphism $TH_{k+1}(M,\bd M)\to \Hom(TH_{n-k-2}(M),\Q/\Z)$. Since $U\subset TH_{n-k-2}(M)$ and $\Q/\Z$ is an injective, the restriction map $\Hom(TH_{n-k-2}(M),\Q/\Z)\to \Hom(U,\Q/\Z)$ is surjective, and thus we have a composite surjection $h: TH_{k+1}(M,\bd M)\to \Hom(U,\Q/\Z)$. So $$|TH_{k+1}(M,\bd M)|=|\ker h|\cdot|\im h|=|\ker h|\cdot|\Hom(U,\Q/\Z)|=|\ker h|\cdot|U|.$$

We have already seen that $U$ and $TW$ are orthogonal under the linking pairing, thus $h$ induces a surjective  homomorphism $TH_{k+1}(M,\bd M)/TW\onto \Hom(U,\Q/\Z)$. In particular, $TW\subset \ker h$. 
We will see that also $\ker(h)\subset TW$, so $\ker (h)=TW$. Therefore, 
\begin{multline*}
|\ker(g)|=|TH_{n-k-2}(M)|\div|TW|
=|TH_{k+1}(M,\bd M)|\div|TW|\\
=|\ker (h)|\cdot|U|\div|TW|
=|TW|\cdot|U|\div|TW|
=|U|,
\end{multline*}
which implies $\ker g=U$.

To prove the claim that $\ker h\subset TW$, 
suppose $x\in TH_{k+1}(M,\bd M)$ and $x\notin W$. Then $\bd_* x\neq 0\in TH_k(\bd M)$. However, since $x$ is a torsion element, there exists a $z\in C_{k+2}(M)$ such that $\bd z=mx+z'$, where $m\neq 0$ and $z'$ is a chain in $\bd M$. Then $m\bd x=-\bd z'\in C_k(\bd M)$. Now since $TH_k(\bd M)\cong \Hom(TH_{n-k-2}(\bd M),\Q/\Z)$ by the linking pairing $\odot_{\bd M}$ in $\bd M$, there is a $y\in TH_{n-k-2}(\bd M)$ such that $\bd x\odot_{\bd M}y=\frac{-1}{m}z'\pf_{\bd M} y\neq 0$ (see e.g. \cite[Appendix]{GBF27}). But  $z'\pf_{\bd M} y=\pm z\pf_M y$, where the subscript indicates the space in which we are computing the intersection number, after moving chains into general position (which does not alter homology classes). Therefore $\bd x\odot_{\bd M}y= \pm\frac{1}{m} z\pf_M y$.  But now thinking of $y$ as representing an element of $U$ and of $z$ as a chain rel $\bd M$, in which case $\bd z=mx$, we have 
$\frac{1}{m} z\pf_M y =x\odot_M y$. As this linking number is not $0$, we have shown that if $x\notin TW$, then $h(x)\neq 0$. Thus $\ker h\subset TW$.  
\end{proof}

\bibliographystyle{amsplain}

\providecommand{\bysame}{\leavevmode\hbox to3em{\hrulefill}\thinspace}
\providecommand{\MR}{\relax\ifhmode\unskip\space\fi MR }
\providecommand{\MRhref}[2]{%
  \href{http://www.ams.org/mathscinet-getitem?mr=#1}{#2}
}
\providecommand{\href}[2]{#2}

Some diagrams in this paper were typeset using the \TeX\, commutative
diagrams package by Paul Taylor.

\end{document}